\newcommand{\Z}{\ensuremath{\mathbb Z}}
\newcommand{\dt}{\ensuremath{D_\mathcal{T}}}
\newcommand{\dg}{\ensuremath{D_\mathcal{G}}}
\newcommand{\dge}{\ensuremath{D_{\mathcal{G}_e}}}
\newcommand{\dgf}{\ensuremath{D_{\mathcal{G}_f}}}
\newcommand{\dgbar}{\ensuremath{\overline{D}_{\overline{\mathcal{G}}}}}
\newcommand{\at}{\ensuremath{\alpha_\mathcal{T}}}
\renewcommand{\geq}{\ensuremath{\geqslant}}
\renewcommand{\leq}{\ensuremath{\leqslant}}
\renewcommand{\subset}{\ensuremath{\subseteq}}
\newcommand{\tauin}{\boldsymbol{\tau}_{\rm in}}
\newcommand{\tauout}{\boldsymbol{\tau}_{\rm out}}
\DeclareMathOperator{\cof}{cof}
\DeclareMathOperator{\Id}{Id}
\DeclareMathOperator{\adj}{adj}
\DeclareMathOperator{\diag}{diag}
\newtheorem{theorem}{Theorem}[section]
\newtheorem{lemma}[theorem]{Lemma}
\newtheorem{prop}[theorem]{Proposition}
\newtheorem{cor}[theorem]{Corollary}
\newtheorem*{notation}{Definition-Notation}
\newtheorem{utheorem}{\textrm{\textbf{Theorem}}}
\theoremstyle{definition}
\newtheorem{definition}[theorem]{Definition}
\newtheorem{remark}[theorem]{Remark}
\numberwithin{equation}{section}
\begin{document}

\title{The additive-multiplicative distance matrix of a graph,
and a novel third invariant}

\dedicatory{To the memory of Ronald L.\ Graham, with admiration}

\author{Projesh Nath Choudhury}
\address[P.N.~Choudhury]{Department of Mathematics, Indian Institute of
Technology Gandhinagar, Palaj, Gandhinagar 382355, India}
\email{\tt projeshnc@iitgn.ac.in}

\author{Apoorva Khare}
\address[A.~Khare]{Department of Mathematics, Indian Institute of Science;
Analysis and Probability Research Group; Bangalore 560012, India}
\email{\tt khare@iisc.ac.in}

\begin{abstract}
Graham showed with Pollak [\textit{Bell Sys.\ Tech.\ J.}\ 1971] and
Hoffman--Hosoya [\textit{J.\ Graph Th.}\ 1977] that for any directed
(additively weighted) graph $G$ with strong blocks $G_e$, the determinant
$\det(D_G)$ and cofactor-sum $\cof(D_G)$ of the distance matrix $D_G$ can
be computed from these same quantities for the blocks $G_e$. These
phenomena have been generalized to trees -- and in our recent work
[\textit{Eur.\ J.\ Combin.}\ 2024] to any graph -- with multiplicative
and $q$-distance matrices. For trees, we went further and unified all
previous variants with weights in a unital commutative ring, into a
distance matrix with both additive and multiplicative edge-data.

In the present paper,
we extend the additive-multiplicative model to $D_G$ for every graph $G$,
and introduce a third, new invariant $\kappa(D_G)$. With these,
we prove general formulas in the above vein connecting $D_G$ to
$D_{G_e}$ that crucially require $\kappa(D_G)$;
and we further refine the state-of-the-art in every setting to minors of
$D_G$. (The simpler case of trees was in our recent work.)
The proofs involve a novel application (to our knowledge) of Zariski
density to this area. Thus, our results hold over an arbitrary
commutative ring, and to our knowledge subsume all previous versions.

In greater detail: first, we introduce the additive-multiplicative
distance matrix $\dg$ of an arbitrary strongly connected graph $G$, using
what we term the additive-multiplicative block-datum $\mathcal{G}$. This
subsumes the previously studied additive, multiplicative, and
$q$-distances for all graphs.

Second, we introduce an invariant $\kappa(\dg)$ that seems novel even
from Graham's original works till now; and use it to prove ``master''
Graham--Hoffman--Hosoya (GHH) identities, which express $\det(\dg),
\cof(\dg)$ in terms of the strong blocks $G_e$. We show how these imply
all previous variants.

Third, we show that $\det(\cdot), \cof(\cdot), \kappa(\cdot)$ depend only
on the block-data for not just $\dg$, but also several minors of $\dg$.
This extension was not studied in any setting to date. We show it holds
in the ``most general'' additive-multiplicative setting, hence in all
previous settings.

Finally, we also compute in closed-form the inverse of $\dg$. This again
specializes to all known variants. In particular, we recover the explicit
formula for $D_T^{-1}$ for additive-multiplicative trees in our recent
work [\textit{Eur.\ J.\ Combin.}\ 2024] (which itself specializes to a
result of Graham--Lov\'asz~[\textit{Adv. Math.}\ 1978] and answers a
question of Bapat--Lal--Pati [\textit{Linear Algebra Appl.}\ 2006] in
greater generality.) We show that not the Laplacian, but a closely
related matrix is the ``correct'' one to use in $\dg^{-1}$ -- for the
most general additive-multiplicative matrix $\dg$, of an arbitrary graph
$G$. As a sample example, we give closed-form expressions for $\det(\dg),
\cof(\dg), \kappa(\dg), \dg^{-1}$ for hypertrees.
\end{abstract}

\date{\today}

\keywords{Additive-multiplicative distance matrix of a graph, strong
blocks, Graham--Hoffman--Hosoya identities, determinant, cofactor-sum,
inverse, $q$-distance, hypertrees, Laplacian}

\subjclass[2010]{05C12 (primary); %
05C20, 05C22, 05C25, 05C50, 05C83, 15A15 (secondary)}

\maketitle

\setcounter{tocdepth}{2}
\tableofcontents

\begin{notation}
All graphs in this paper (see~\cite{Harary} for basics) are assumed to be
finite, simple, directed, and \emph{strongly connected}, i.e., for which
there exist directed paths between any two distinct vertices. (This is
assumed in order to be able to define a distance function between any two
nodes; for an undirected graph, all edges are understood to be
bidirected.) A \emph{cut-vertex} is one whose removal disconnects the
underlying undirected graph, and maximal subgraphs without cut-vertices
are called \emph{strong blocks}. Below, we always index by $E$ the strong
blocks of a graph $G$, not the edges (unless $G$ is a tree). A
\emph{hypertree} is a graph whose strong blocks are all cliques/complete
graphs.

Unless otherwise specified we work over an arbitrary commutative unital
ring $R$. For Zariski density arguments, we will first prove our desired
equations over the field $\mathbb{Q}(\{ a_e, m_{ij} \})$ generated by a
set of say $N$ variables, then observe that these equations in fact hold
in the subring of polynomial \emph{functions} $\Z[ \{ a_e, m_{ij} \} ]$
(using a Zariski dense subset of $\mathbb{A}_\mathbb{Q}^N$), and then
specialize to arbitrary $R$.

Given $n \geq 1$ and a set $V$ of size $n$, define ${\bf e}_j$ to be the
standard basis vector for $1 \leq j \leq n$; and
\[
{\bf e} = {\bf e}(V) = {\bf e}(n) := (1, \dots, 1)^T = \sum_{j=1}^n
{\bf e}_j \in R^n, \qquad J = J_n := {\bf e}(n) {\bf e}(n)^T, \qquad
[n] := \{ 1, \dots, n \}.
\]

Finally, given a matrix $A = (a_{ij}) \in R^{n \times n}$ with cofactors
$c_{ij} = (-1)^{i+j} \det A_{ij}$, its \emph{adjugate matrix} and
\emph{cofactor-sum} are, respectively,
\[
\adj(A) := (c_{ji})_{i,j=1}^n, \qquad \cof(A) := \sum_{i,j=1}^n c_{ij} =
{\bf e}(n)^T \adj(A) {\bf e}(n).
\]

\noindent Now one has the following basic lemma (see e.g.~\cite{CK-tree})
which is used below, possibly without reference:
\end{notation}

\begin{lemma}\label{Ldetcof}
Suppose $n \geq 1$ is an integer and $R$ is a commutative unital ring. If
$x$ is an indeterminate that commutes with $R$, and $A \in R^{n \times
n}$ is any matrix, then $\det(A + xJ_n) = \det(A) + x \cof(A)$. Moreover,
$\cof(A) = {\bf e}(n)^T \adj(A) {\bf e}(n) = \cof(A + xJ_n)$.
\end{lemma}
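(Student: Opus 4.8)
The plan is to prove the determinant identity first, by multilinearity of the determinant in the rows, exploiting that every row of $J_n$ is the vector $\mathbf{e}(n)^T$ (so $J_n$ has rank at most one); note that the Leibniz and Laplace expansions are valid over any commutative ring, so no restriction on $R$ is needed. Write $A_{(i)}$ for the $i$-th row of $A$, so that the $i$-th row of $A + x J_n$ is $A_{(i)} + x\,\mathbf{e}(n)^T$. Expanding the determinant as a multilinear function of its $n$ rows and factoring the scalar $x$ out of each chosen row gives
\[
\det(A + x J_n) = \sum_{S \subseteq [n]} x^{|S|} \det\!\big(B^S\big),
\]
where $B^S$ is the matrix whose $i$-th row equals $\mathbf{e}(n)^T$ for $i \in S$ and $A_{(i)}$ for $i \notin S$. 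Whenever $|S| \geq 2$ the matrix $B^S$ has two equal rows, hence $\det(B^S) = 0$, so only the terms with $S = \varnothing$ and $S = \{i\}$, $i \in [n]$, survive.

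Next I would identify these surviving terms. The term $S = \varnothing$ contributes $\det(A)$. For $S = \{i\}$, a Laplace expansion of $\det(B^{\{i\}})$ along its $i$-th row, which is $\mathbf{e}(n)^T = (1, \dots, 1)$, gives $\sum_{j=1}^n (-1)^{i+j} \det(A_{ij}) = \sum_{j=1}^n c_{ij}$, since deleting row $i$ and column $j$ of $B^{\{i\}}$ produces exactly the minor matrix $A_{ij}$. Summing over $i$ yields $\sum_{i,j=1}^n c_{ij} = \cof(A)$, and therefore $\det(A + x J_n) = \det(A) + x\,\cof(A)$. (For $n = 1$ this is the degenerate instance: $B^{\{1\}} = (1)$ and the empty-minor convention gives $c_{11} = 1$, consistent with $\det\!\big((a_{11} + x)\big) = a_{11} + x$.)

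For the remaining claims, the equality $\cof(A) = \sum_{i,j} c_{ij} = \mathbf{e}(n)^T (c_{ji})_{i,j=1}^n \mathbf{e}(n) = \mathbf{e}(n)^T \adj(A)\,\mathbf{e}(n)$ is just the definition of $\adj$ and $\cof$. To get $\cof(A + x J_n) = \cof(A)$, I would bring in a second indeterminate $y$ commuting with $R$ and with $x$, and apply the identity just proved to the matrix $A + x J_n$ in place of $A$:
\[
\det\!\big((A + x J_n) + y J_n\big) = \det(A + x J_n) + y\,\cof(A + x J_n) = \det(A) + x\,\cof(A) + y\,\cof(A + x J_n).
\]
Since $(A + x J_n) + y J_n = A + (x + y) J_n$, the left side also equals $\det(A) + (x + y)\,\cof(A)$; equating coefficients of $y$ in $R[x, y]$ forces $\cof(A + x J_n) = \cof(A)$.

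I do not expect a genuine obstacle here. The only point that needs care is that $R$ is an arbitrary commutative ring, so $A$ need not be invertible and the one-line ``matrix determinant lemma'' computation $\det(A + x\,\mathbf{e}(n)\mathbf{e}(n)^T) = \det(A)\,\big(1 + x\,\mathbf{e}(n)^T A^{-1} \mathbf{e}(n)\big)$ is unavailable; the multilinearity argument above avoids inverses altogether. Alternatively, since the asserted identities are polynomial in the entries of $A$ and in $x$ with integer coefficients, one could first verify them for a generic (hence invertible) matrix over $\mathbb{Q}$ and then invoke the Zariski-density principle used throughout the paper.
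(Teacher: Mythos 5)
Your proof is correct: the multilinearity expansion over subsets $S$, the vanishing of all terms with $|S|\geq 2$ by repeated rows, the Laplace expansion identifying the $S=\{i\}$ terms with $\sum_j c_{ij}$, and the two-indeterminate trick for $\cof(A+xJ_n)=\cof(A)$ are all valid over an arbitrary commutative ring. The paper itself gives no proof of this lemma (it only cites \cite{CK-tree}), and your argument is the standard one for this statement, so there is nothing to reconcile.
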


\section{The additive-multiplicative distance matrix of a
graph}\label{S1}

This paper contributes to the rich area of studying matrices associated
to graphs (see e.g.~\cite{Br,BrCv}). In it, we generalize and extend the
results in our previous work~\cite{CK-tree}, from trees to hypertrees
and to arbitrary graphs. In particular, we subsume previous works in the
literature that study distance matrices of graphs which are not trees. At
the same time, this work provides a deeper, more conceptual understanding
of the previously shown ``explicit formulas'' in the literature.

Classically, a distance function on a graph $G$ was defined to be one
that is additive across cut-vertices.
Distance matrices of graphs (especially trees) and their invariants have
been extensively studied in the literature, beginning with the seminal
work of Graham with his coauthors~\cite{GHH, GL, Graham-Pollak}. In
\cite{Graham-Pollak}, the authors computed $\det(D_T)$ for
$G=T$ a tree on $|E|$ edges (i.e.\ $|E|+1$ nodes), and showed this
depends only on $|E|$, not the structure of $T$. They also computed a
second invariant, the \textit{cofactor-sum} $\cof(D_T)$, and showed it
has the same independence property. More generally \cite{GHH}, the pair
$(\det(D_G), \cof(D_G))$ for an arbitrary (additively weighted) strongly
connected directed graph depend only on the pairs
$(\det(D_{G_e}), \cof(D_{G_e}))$ for the strong blocks $\{ G_e : e \in E
\}$ of $G$.

There have since been many variants of distance matrices proposed and
studied, including with additive edgeweights $a_e$, multiplicative
edgeweights $m_e$, and $q$-edgeweights $\frac{q^{\alpha_e} - 1}{q-1}$ --
and in all known cases, $\det(D_G)$ depends not on the graph structure
but only on the edge-data of the blocks.\bigskip

\noindent \textbf{The additive-multiplicative distance matrix of a graph.}
In prior recent work~\cite{CK-tree}, we unified all of these settings for
\textit{trees} into one common framework -- that of a tree with additive
and multiplicative edgeweights -- and showed that the same independence
property holds for the corresponding general form of the distance matrix.
We further showed that allowing greater freedom in the parameters leads
to dependence of the determinant on the tree structure.
Additionally, we worked with distance matrices over an arbitrary unital
commutative ring.
In the present paper, our first novel contribution is to extend this
``most general'' setting for trees, to arbitrary graphs:

\begin{definition}
Given a graph $G$ with strong blocks $\{ G_e : e \in E \}$ on $p_e$
vertices, an \textit{additive-multiplicative block-datum} $\mathcal{G}$
attached to $G$ is an $E$-tuple of pairs $\{ \mathcal{G}_e := (a_e,
D^*_{G_e}) : e \in E \}$ such that each $a_e \in R$, and each $D^*_{G_e}
\in R^{p_e \times p_e}$ is a matrix with all diagonal entries $1$.

Attached to this datum, we define two matrices for $G$ (assuming $G$ has
vertex set $[n]$):
\begin{enumerate}
\item The \textit{multiplicative distance matrix} $D_G^* :=
(m_{ij})_{i,j=1}^n \in R^{n \times n}$ is the matrix with principal
submatrices $D^*_{G_e}$ corresponding to the nodes of each strong block;
and for $i, j \in [n]$ not in the same block, if $v$ is any cut-vertex
along a directed path from $i$ to $j$, then we inductively define $m_{ij}
:= m_{iv} m_{vj}$. (Note: all diagonal entries in $D^*_G$ are still $1$.)

\item The \textit{additive-multiplicative distance matrix} $\dg :=
(d_{ij})_{i,j=1}^n \in R^{n \times n}$ has principal submatrices $\dge :=
a_e(D^*_{G_e} - J_{p_e})$ over the nodes of each strong block $G_e$; and
for $i, j \in [n]$ not in the same block, if $v$ is any cut-vertex along
a directed path $: i \to j$, then inductively define $d_{ij} := d_{iv} +
m_{iv} d_{vj}$.\footnote{Alternately, one can work with the ``dual''
definition $d_{ij} = d_{iv} m_{vj} + d_{vj}$ -- but this essentially
means that one deals with $(D^*_G)^T$ and $\dg^T$. We will not proceed
further along these lines in the present work. Also, $d_{ii} = 0\ \forall
i$.} 
(Note: $d_{ij}$ -- as also $m_{ij}$ in~(1) -- does not depend on choice
of $v$.)
\end{enumerate}
\end{definition}

\begin{remark}[Special cases]\label{Rspecial}
We briefly explain how the above framework encompasses (to our knowledge)
all known variants of distance matrices of graphs studied in the
literature over unital commutative rings. We refer the reader
to~\cite{CK-tree} for a detailed history and survey of the literature.
\begin{enumerate}
\item Our framework includes all known distance matrices of trees (i.e.\
with additive, multiplicative, and $q$ edgeweights),
because it includes as a special case the general additive-multiplicative
distance matrix for trees, introduced in our prior work~\cite{CK-tree}
over any unital commutative ring.
This is the special case of $\dg$ where each $G_e$ is a single edge $e =
\{ i, j \}$, and $E$ denotes the set of edges of the tree. In particular,
$D^*_{G_e} = \begin{pmatrix} 1 & m_{ij} \\ m_{ji} & 1 \end{pmatrix}$, so
that the datum $\mathcal{G}$ precisely equals $\{ \mathcal{G}_e := (a_e,
m_{ij}, m_{ji}) : e \in E \}$ as in~\cite{CK-tree}. As sample references
we mention~\cite{Aida,BKN,BLP1,YY1,YY2,ZD1}; for a comprehensive list,
see the bibliography in~\cite{CK-tree}.

\item As explained in~\cite{CK-tree} (mostly but not only for trees), the
above general framework also includes the special case in which all $a_e
= 1$ -- for all graphs. In other words, $\dg = D_G^* - J$ is a rank-one
update of the multiplicative distance matrix $D_G^*$. In particular, our
data of interest $(\det(\cdot), \cof(\cdot))$ for $\dg$ and for $D_G^*$
can be recovered from each other via Lemma~\ref{Ldetcof}.

\item Now if we set $m_{ij} = q^{\alpha_{ij}}$ for $i,j \in V(G_e)$ (with
$\alpha_{ii} = 0\ \forall i \in [n]$), and reset $a_e := 1/(q-1)$ for all
$e \in E$, then we recover the \textit{$q$-distance matrix} $D_q(G)$ as a
special case of the additive-multiplicative distance matrix. In
particular, further setting $q \to 1$ recovers the classical distance
matrix of a possibly directed graph, with additive
edgeweights.\footnote{Observe, naively setting all $m_{ij} = 1$ instead,
yields a special case with trivial additive edgeweights: $\dg = 0$.} Note
that the classical and $q$-distance matrices are precisely the variants
studied in the literature for non-tree graphs: unicyclic, bicyclic,
polycyclic, cactus, and cycle-clique graphs; as well as hypertrees (i.e.\
graphs whose strong blocks are all complete).

\item We also remark that in the case of hypertrees, the special case in
which all off-diagonal entries $m_{ij}$ in a given block $G_e$ equal $q$,
and each $a_e$ equals $w_e/(q-1)$, was studied in~\cite{S-hypertrees},
and is not a $q$-distance matrix. In fact, to our knowledge the
closed-form expression for $\det(\dg)$ here is perhaps the one
\textit{known} formula\footnote{This work strictly extends all known
settings, so its results were of course not known or conjectured either.}
that our previous work~\cite{CK-tree} could not handle. This is addressed
by the present paper, whose results specialize to closed-form expressions
for $\det(\dg)$ as well as $\cof(\dg)$ for arbitrary hypertrees -- and in
the more general additive-multiplicative setting. See
Section~\ref{Shyper}.

\item Note in the previous example of~\cite{S-hypertrees} that the
inverse of the $q$-weighted distance matrix was not computed for
hypertrees. In this paper we are able to achieve this goal as well -- and
for an arbitrary \textit{additive-multiplicative} graph $G$ (in terms of
$\dge^{-1}$ for the strong blocks $G_e$ of $G$). This also subsumes our
formula in~\cite{CK-tree} for $\dt^{-1}$ for additive-multiplicative
trees.
\end{enumerate}
\end{remark}

\begin{remark}
If one varies the additive edgeweight inside a block, to try and work
with an even more general additive-multiplicative distance matrix, then
$\det(\dg)$ and $\cof(\dg)$ depend on these additive edgeweights
vis-a-vis the geometry of the blocks. This dependence was shown via an
example in~\cite[Section 1]{CK-tree} for trees. Similarly,
in~\cite{CK-tree} we defined an invariant $\kappa(\dg, v_0)$ which
depends on the location of the vertex $v_0$ when the additive edgeweight
is allowed to vary inside a block -- see the example in~\cite[Section
7]{CK-tree} of a triangle.
Below, we show that for all additive-multiplicative matrices,
$\kappa(\dg, v_0)$ is independent of $v_0$ -- and in fact, provides the
key ingredient to computing $\det(\dg), \cof(\dg)$ from the strong blocks
$G_e$ of $G$.
 In this sense also, is our additive-multiplicative model ``most
general'', at least to date.
\end{remark}

\begin{remark}[Zariski density]
A quick word on a \textit{novel technique} in this subfield, which we
introduced for trees in \cite{CK-tree}, and which is equally effective
and powerful for general graphs: Zariski density. This affords the
freedom to assume that several quantities that are not identically zero,
are ``always'' nonzero -- in fact invertible. This helps
significantly in computing determinants and inverses, and is
stated more precisely in Remark~\ref{Rzariski-short} and (in greater
detail) in previous work~\cite{CK-tree} (cf.\ \cite[Lemma 2.1]{Alon}).
\end{remark}

\noindent \textbf{Organization.}
We conclude by briefly listing the main results of this paper: one
theorem in each section below. In Section~\ref{S2}, we show our first
main result: a general set of Graham--Hoffman--Hosoya type identities,
which furthermore explain a novel graph invariant $\kappa(\dg)$,
introduced for trees in our recent work~\cite{CK-tree}. The identities
also have a host of applications, explained below.

In Section~\ref{S3}, we go even beyond computing the invariants
$\det(\cdot), \cof(\cdot), \kappa(\cdot)$ for arbitrary
additive-multiplicative $\dg$, to computing them for \textit{minors} of
$\dg$ -- and explain several cases when they too depend only on the
block-data. In Section~\ref{S4}, we compute $\dg^{-1}$ in closed-form,
and show that the traditionally used ``Laplacian matrix'' should be
corrected in the present, more general setting.

\section{{Theorem~\ref{Tmasterghh}:} The Master Graham--Hoffman--Hosoya
identities}\label{S2}

The Graham--Hoffman--Hosoya (GHH) identities~\cite{GHH} are the reason
why -- for any directed additively weighted tree $T$ -- the quantities
$\det(D_T), \cof(D_T)$ are independent of the tree structure and depend
only on the set of edgeweights (as mentioned in Section~\ref{S1}). As
shown in \textit{loc.~cit.}, these ``classical'' GHH identities hold for
all (finite simple directed strongly connected, as above) graphs $G$ with
\textit{additive} edgeweights, using their strong blocks.

The GHH identities in~\cite{GHH} were extended to a $q$-variant
in~\cite{S}. In our previous work~\cite{CK-tree} we obtained
generalizations of the original GHH identities that held for all
multiplicative distance matrices and all $q$-distance matrices (and in
particular specialized to the identities in \cite{GHH,S}). We also
provided at the end of~\cite{CK-tree}, a new invariant $\kappa(D_G, v_0)$
for additive-multiplicative trees and the corresponding GHH identities,
for general graphs $G$ rooted at a fixed cut-vertex $v_0$.

Now a natural question is if analogous identities hold in the present,
general additive-multiplicative setting. This turns out to be true, as is
now shown; moreover, a new invariant $\kappa(\dg)$ is crucially required.
Thus, our first result provides ``master'' Graham--Hoffman--Hosoya
identities, with the word ``master'' used for the following reasons:
\begin{enumerate}
\item The identities below explain the new invariant $\kappa(D_G,v_0)$
introduced in prior work~\cite{CK-tree}.

\item These (novel) identities apply to all additive-multiplicative
distance matrices of all graphs. Previous versions in~\cite{CK-tree, GHH,
S} were applicable only to restricted versions -- the multiplicative,
$q$, and classical distance matrices -- but not to the general version
$\dg$ (which was not even defined). For identities in our general
additive-multiplicative setting, $\kappa(\dg, v_0)$ is needed.

As a concrete special case: consider the formulas~\eqref{Emaster} from a
previous work~\cite{CK-tree}, for $\det(\dg)$ and $\cof(\dg)$, where $G$
is a tree. By inspection, these depend only on $\det(\dge)$ and
$\cof(\dge)$ for all
strong blocks -- i.e.~edges $e$ of $G$ -- but previous GHH variants
(including \cite{CK-tree}) could not explain why, for the
\textit{general} additive-multiplicative trees. Now we can.

\item The identities below specialize and apply to all of the previously
studied settings, to yield all previously known GHH identities. See
Corollary~\ref{Cmasterghh}.

\item In a later subsection, we will apply these identities to obtain
closed-form expressions for $\det(\dg), \cof(\dg)$ for arbitrary
additive-multiplicative hypertrees. This again extends prior formulas
in~\cite{CK-tree, S-hypertrees} to the additive-multiplicative setting,
which could not be handled by the results in previous papers (including
\textit{loc.\ cit.}).
\end{enumerate}

We begin with the main result in this section, which addresses the first
two of these points.

\begin{definition}[From~\cite{CK-tree}]\label{Dkappa}
Fix a vertex $v_0$ of a graph $G$. Given a subgraph $G'$ induced on a
subset of nodes $V'$ containing $v_0$, with additive-multiplicative
block-datum $\mathcal{G}'$, write
\[
D_{\mathcal{G}'} := (d(v,w))_{v,w \in V'} = \begin{pmatrix} D|_{V'
\setminus \{ v_0 \}} & {\bf u}_1 \\ {\bf w}_1^T & 0 \end{pmatrix},
\]
by relabelling the nodes, and define the invariant
\begin{equation}\label{Ekappa}
\kappa(D_{\mathcal{G}'}, v_0) := \det \left( D|_{V' \setminus \{ v_0 \}}
- {\bf u}_1 \, {\bf e}(V' \setminus \{ v_0 \})^T - {\bf m}(V' \setminus
\{ v_0 \}, v_0) {\bf w}_1^T \right).
\end{equation}
\end{definition}

For an alternate formula for $\kappa(\dg,v_0)$, see~\eqref{Enew-identity}
below (and see~\eqref{Ekappa2} for a generalization). Also, a notational
\textbf{remark}: in the sequel we may use multiple ${\bf e}$-vectors of
varying dimensions in our computations, the dimension of each being
deducible from context. At other times, we will use the notation ${\bf
e}(n)$ or ${\bf e}(V)$ depending on the context -- similarly for the
square matrix $J$ or $J_n$.

\begin{utheorem}[Master GHH identities]\label{Tmasterghh}
Fix a graph $G$ with node-set $[n]$, with strong blocks $G_e$ on $p_e$
nodes for $e \in E$. Let $\mathcal{G} = \{ \mathcal{G}_e = (a_e,
D^*_{G_e}) : e \in E \}$ be an additive-multiplicative block-datum, and
$\dg, D^*_G, \dge, D^*_{G_e}$ the corresponding matrices.
Also fix a (not necessarily cut) vertex $v_0$ of $G$. Then
$\kappa(\dg,v_0)$ is independent of $v_0$, and equals
\begin{equation}\label{Emghh1}
\kappa(\dg, v_0) = \det(D^*_G) \prod_{e \in E} a_e^{p_e - 1}.
\end{equation}
Writing this as $\kappa(\dg)$,
so that $\kappa(\dge) = a_e^{p_e - 1} \det(D^*_{G_e})$,
the following identities then hold:
\begin{align}
\kappa(\dg) = &\ \prod_{e \in E} \kappa(\dge),\label{Emghh2}\\
\frac{\det(\dg)}{\kappa(\dg)} = &\ \sum_{e \in E}
\frac{\det(\dge)}{\kappa(\dge)},\label{Emghh3}\\
\frac{\cof(\dg)}{\kappa(\dg)} - 1 = &\ \sum_{e \in E}
\left( \frac{\cof(\dge)}{\kappa(\dge)} - 1 \right),\label{Emghh4}
\end{align}
where the denominators in the second and third formulas are understood to
be placeholders, to be canceled by multiplying both sides by
$\kappa(\dg)$.
Finally, we have
\begin{equation}\label{Emghh5}
\cof(\dg) = \cof(D^*_G) \prod_{e \in E} a_e^{p_e - 1}, \qquad
\frac{\cof(\dg)}{\kappa(\dg)} = \frac{\cof(D^*_G)}{\det(D^*_G)}.
\end{equation}
\end{utheorem}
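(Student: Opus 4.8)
The plan is to prove the identities in the order in which they cascade: first establish the closed form \eqref{Emghh1} for a single cut-vertex decomposition, then bootstrap to the full block-tree by induction, and finally read off \eqref{Emghh2}--\eqref{Emghh5} as essentially formal consequences. The central object is the block-cut tree of $G$: pick a cut-vertex $v$ whose removal separates $G$ into $G_f$ (containing $v_0$) and $G_g$ (the rest, reattached at $v$), so that every vertex/block lies on exactly one side. Relative to the ordering ($V(G_f)\setminus\{v_0\}$, then $v_0$, then $V(G_g)\setminus\{v\}$ with $v$ placed last within the $G_f$-block), the matrix $\dg$ is block lower-triangular-ish: the multiplicative recursion $d_{ij}=d_{iv}+m_{iv}d_{vj}$ and $m_{ij}=m_{iv}m_{vj}$ means that the $(G_f,G_g)$ off-diagonal block of $\dg$ factors as a rank-structured outer product $\mathbf{d}(\cdot,v)\,\mathbf{e}^T + \mathbf{m}(\cdot,v)\,(\text{row }v\text{ of }\dg|_{G_g})$. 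This is exactly the structure that the definition of $\kappa$ in \eqref{Ekappa} is engineered to kill: subtracting $\mathbf u_1\mathbf e^T$ and $\mathbf m(\cdot,v_0)\mathbf w_1^T$ annihilates precisely the rank-two ``glue'' coming from paths through $v_0$.

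First I would prove the multiplicativity-over-one-cut-vertex statement: if $v$ is a cut-vertex splitting off a single block or a single sub-branch, then $\kappa(\dg,v_0)=\kappa(D_{\mathcal G_f},v_0)\cdot\kappa(D_{\mathcal G_g},v)$, where on the $G_g$ side the root is taken at the cut-vertex $v$. The proof is a Schur-complement / row-and-column operation computation: write the $(|V|-1)\times(|V|-1)$ matrix inside $\kappa(\dg,v_0)$ in the above block form, use the outer-product structure of the off-diagonal block together with the fact that the relevant $\mathbf e$- and $\mathbf m$-subtractions clear it, and the determinant factors. Here the Zariski-density remark is what lets me assume any ``generically nonzero'' pivot (e.g. some $a_e$, or $\kappa(D_{\mathcal G_e})=a_e^{p_e-1}\det D^*_{G_e}$) is invertible while doing the row reduction, then specialize back to an arbitrary commutative ring $R$ at the end. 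Iterating this over the block-cut tree gives $\kappa(\dg,v_0)=\prod_{e\in E}\kappa(\dge,v_e)$ for appropriate roots $v_e$ in each block; then I must check the single-block base case $\kappa(\dge,v_e)=a_e^{p_e-1}\det(D^*_{G_e})$, which is a direct computation: $\dge=a_e(D^*_{G_e}-J_{p_e})$, plug into \eqref{Ekappa}, and the $J_{p_e}$-part together with the $\mathbf e$/$\mathbf m$ subtractions collapse, leaving $a_e^{p_e-1}\det(D^*_{G_e})$ up to sign bookkeeping. Multiplying these gives \eqref{Emghh1} (independence of $v_0$ is then automatic, since the right-hand side $\det(D^*_G)\prod a_e^{p_e-1}$ has no $v_0$ in it), and \eqref{Emghh2} is literally the same statement rewritten with $\kappa(\dge)$ in place of $\kappa(\dge,v_e)$.

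For \eqref{Emghh3} and \eqref{Emghh4} the strategy is to reduce to the classical additively-weighted GHH identities of \cite{GHH} by a scaling trick. The point of dividing by $\kappa$ in those two formulas is that $\det(\dg)/\kappa(\dg)$ and $\cof(\dg)/\kappa(\dg)-1$ behave \emph{additively} across cut-vertices exactly as $\det(D_G)$ and $\cof(D_G)-1$ do in the purely additive theory. Concretely, I would again induct on the block-cut tree: using Lemma~\ref{Ldetcof} in the form $\det(A+xJ)=\det A+x\cof A$ to handle the rank-one $J$-updates built into $\dge=a_e(D^*_{G_e}-J)$, together with the same block/Schur-complement expansion of $\dg$ along one cut-vertex $v$, I get a recursion $\det(\dg)=\det(D_{\mathcal G_f})\,\kappa(D_{\mathcal G_g},v)+\kappa(D_{\mathcal G_f},v_0?)\cdot(\text{correction})+\dots$; dividing through by $\kappa(\dg)=\kappa(D_{\mathcal G_f})\kappa(D_{\mathcal G_g})$ turns the product recursion into the additive one $\det(\dg)/\kappa(\dg)=\det(D_{\mathcal G_f})/\kappa(D_{\mathcal G_f})+\det(D_{\mathcal G_g})/\kappa(D_{\mathcal G_g})$, and iterating yields \eqref{Emghh3}. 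The cofactor identity \eqref{Emghh4} follows from \eqref{Emghh3} by the standard device: $\cof(\dg)=\det(\dg+xJ)$-coefficient, and $\dg+xJ$ is the additive-multiplicative distance matrix of the same $G$ with one block's $a_e$ (or the datum on one extra ``virtual'' edge) shifted — so apply \eqref{Emghh3} to that perturbed matrix and extract the linear-in-$x$ term; the ``$-1$'' bookkeeping on each side is the usual Euler-characteristic-type count (number of blocks minus number of cut-vertex-gluings $=1$ for a connected block-cut tree). Finally \eqref{Emghh5} is immediate: $\cof(\dg)=\cof(D^*_G)\prod a_e^{p_e-1}$ follows from $\dge=a_e(D^*_{G_e}-J)$ via $\cof(A+xJ)=\cof(A)$ and a scaling argument identical to the single-block $\kappa$ computation, and the second equality in \eqref{Emghh5} is then just \eqref{Emghh1} divided into it.

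The main obstacle will be the one-cut-vertex factorization of $\kappa$: getting the block/Schur-complement bookkeeping exactly right — in particular verifying that the two rank-one subtractions $\mathbf u_1\mathbf e^T$ and $\mathbf m(\cdot,v_0)\mathbf w_1^T$ in \eqref{Ekappa} interact correctly with the \emph{dual} asymmetry of the recursions $d_{ij}=d_{iv}+m_{iv}d_{vj}$ versus $m_{ij}=m_{iv}m_{vj}$ (rows vs. columns, and where $v_0$ sits relative to $v$), and that the signs from permuting $v_0$/$v$ into the last coordinate cancel. Once that single lemma is nailed, everything else is induction on the block-cut tree plus repeated use of Lemma~\ref{Ldetcof} and Zariski density to legitimize dividing by generically-nonzero quantities before specializing to arbitrary $R$.
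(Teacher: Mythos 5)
Your overall architecture matches the paper's: prove \eqref{Emghh1} by a cut-vertex factorization of $\kappa$ plus a single-block base case, then obtain \eqref{Emghh3}--\eqref{Emghh4} by induction on the block decomposition, and \eqref{Emghh5} by the $a_e\equiv 1$ specialization together with Lemma~\ref{Ldetcof}. But your route to \eqref{Emghh1} is genuinely different at the one point where the paper has to work hardest. The paper's factorization lemma splits $G$ at $v_0$ itself, rooting \emph{both} pieces at $v_0$; consequently, when $v_0$ is not a cut-vertex the paper cannot peel off blocks one at a time and must instead perform the long $(2k+2)\times(2k+2)$ block computation around the block $G_0$ containing $v_0$. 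Your version roots the far piece at the separating cut-vertex $v$ rather than at $v_0$, i.e.\ $\kappa(\dg,v_0)=\kappa(D_{\mathcal G_f},v_0)\,\kappa(D_{\mathcal G_g},v)$, and this does check out: for $i\in V(G_g)\setminus\{v\}$ the $(i,j)$ entries of the matrix in \eqref{Ekappa} equal $m_{iv}$ times the $v$-row plus (in the $G_g$-columns) the corresponding entry of the $\kappa$-matrix of $G_g$ rooted at $v$, so subtracting $m_{iv}$ times the $v$-row from each such row produces a block-triangular matrix and the determinant factors. Since the root of the split-off piece is always available as the attaching cut-vertex, this peels the block-cut tree down to single blocks with arbitrary roots, and the paper's own base-case computation finishes the job -- no case distinction on whether $v_0$ is a cut-vertex, and no $(2k+2)$-block matrix. (To land on the stated form $\det(D^*_G)\prod_e a_e^{p_e-1}$ rather than $\prod_e\kappa(\dge)$ you need $\det(D^*_G)=\prod_e\det(D^*_{G_e})$; cite the multiplicative GHH identity from \cite{CK-tree} here, as the paper implicitly does, rather than \eqref{Eghh12}, which is derived downstream.)

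There is one concrete gap: your derivation of \eqref{Emghh4} from \eqref{Emghh3} rests on the claim that $\dg+xJ$ ``is the additive-multiplicative distance matrix of the same $G$ with one block's $a_e$ shifted.'' It is not: adding $xJ$ puts $x$ on the diagonal and perturbs every cross-block entry uniformly, which does not correspond to modifying any single $\mathcal G_e$, so you cannot simply apply \eqref{Emghh3} to a ``perturbed datum'' and read off the linear coefficient. The repair is standard and is what the paper does via \cite[Theorem~D]{CK-tree}: run the same one-cut-vertex Schur-complement computation on $\det(\dg+xJ)$ with $x$ an indeterminate, obtaining a recursion for the pair $(\det,\cof)$ simultaneously, and then extract $\cof$ as the coefficient of $x$ using Lemma~\ref{Ldetcof}. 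Your sketch of the $\det$ recursion for \eqref{Emghh3} is likewise only an outline (the ``correction'' terms are exactly where \cite[Theorem~D]{CK-tree} does the work), so either cite that result, as the paper does, or carry the $+xJ$ through your factorization computation from the start. With those two repairs the proposal is sound, and for \eqref{Emghh1} it is arguably cleaner than the published argument.
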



Before proving the theorem, we recall the special case of trees shown
in~\cite{CK-tree}, where each $G_e$ is a single edge $K_2$ (so $p_e =
2$), and $G = T$ has block-datum $\mathcal{T}$:
\begin{equation}\label{Emaster}
\det (\dt + x J)
= \prod_{e \in E} a_e (1 - m_e m'_e) \left[ \sum_{e
\in E} \frac{a_e (m_e - 1)(m'_e - 1)}{m_e m'_e - 1} + x \left( 1 -
\sum_{e \in E} \frac{(m_e - 1)(m'_e - 1)}{m_e m'_e - 1} \right)
\right].
\end{equation}

\noindent Observe two points about these, unexplained in~\cite{CK-tree},
which can now be understood via Theorem~\ref{Tmasterghh}:
\begin{itemize}
\item The common multiplicative factor $\prod_e (a_e (1 - m_e m'_e))$ is
precisely $\kappa(\dt)$ (at any vertex $v_0$), since the set $E$ of
strong blocks is precisely the edge-set of the tree, and each $D^*_{G_e}
= \begin{pmatrix} 1 & m_e \\ m'_e & 1 \end{pmatrix}$ (up to transposing).
Now the formulas for $\displaystyle \frac{\det(\dt)}{\kappa(\dt)}$ and $1
- \displaystyle \frac{\cof(\dt)}{\kappa(\dt)}$ are remarkably similar:
they involve summing the same terms, with an additional factor of $a_e$
in the former expression.

\item $\cof(\dt)$ does not depend on the complete datum $\mathcal{T} = \{
(a_e, D^*_{G_e}) : e \in E \} = \{ (a_e, m_e, m'_e) : e \in E \}$, but
rather on the decoupled data $\{ a_e \}$ and $\{ D^*_{G_e} \} = \{ (m_e,
m'_e) \}$.
\end{itemize}

We now explain why both phenomena are special cases of our master GHH
identities. In the first case, since $\dge = a_e (D^*_{G_e} - J)$, the
summands in the right-hand sides of~\eqref{Emghh3} and~\eqref{Emghh4} are
computable by Lemma~\ref{Ldetcof} to be, respectively:
\begin{equation}\label{Etemp}
\frac{\det(\dge)}{\kappa(\dge)} = a_e \left( 1 -
\frac{\cof(D^*_{G_e})}{\det(D^*_{G_e})} \right), \qquad
\frac{\cof(\dge)}{\kappa(\dge)} - 1 =
\frac{\cof(D^*_{G_e})}{\det(D^*_{G_e})} - 1.
\end{equation}
This explains the first phenomenon above, but now for all $G$ -- as well
as the second phenomenon, since the latter quantity in~\eqref{Etemp} does
not depend on $a_e$. In particular,~\eqref{Emghh3} can be restated as:
\[
\frac{\det(\dg)}{\kappa(\dg)} = \sum_{e \in E} a_e
\left( 1 - \frac{\cof(\dge)}{\kappa(\dge)} \right).
\]

A pleasing consequence of our master GHH formulas -- given the present
discussion -- is that even for arbitrary graphs, $\det(\dg), \cof(\dg),
\kappa(\dg)$ do not depend on the complete datum $\mathcal{G} = \{ (a_e,
D^*_{G_e}) : e \in E \}$, but once again only on triples and pairs:

\begin{cor}
For any additive-multiplicative distance matrix $\dg$ of an arbitrary
graph $G$,
\begin{enumerate}
\item $\det(\dg)$ depends on the datum $\mathcal{G}$ only through the
triples $\{ (a_e, \det(D^*_{G_e}), \cof(D^*_{G_e})) : e \in E \}$,
\item $\cof(\dg)$ through the decoupled data $\{ a_e : e \in E \} \sqcup
\{ (\det(D^*_{G_e}), \cof(D^*_{G_e})) : e \in E \}$, and \item
$\kappa(\dg)$ depends on $\mathcal{G}$ through even less: $\{ a_e : e \in
E \} \sqcup \{ \det(D^*_{G_e}) : e \in E \}$.
\end{enumerate}
\end{cor}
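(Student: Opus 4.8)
The plan is to deduce everything from Theorem~\ref{Tmasterghh} together with Lemma~\ref{Ldetcof}: I would first write down explicit polynomial formulas for $\kappa(\dg)$, $\cof(\dg)$, $\det(\dg)$ in terms of the block-level scalars $a_e$, $\det(D^*_{G_e})$, $\cof(D^*_{G_e})$, and then simply read off the asserted dependences. Throughout, the graph $G$ — and hence each block size $p_e$ — is fixed, so ``depends on the datum $\mathcal{G}$ only through $\dots$'' means precisely that the listed functions of $\mathcal{G}$ determine the invariant.

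The first step is to evaluate the three invariants on a single block. Since $\dge = a_e(D^*_{G_e} - J_{p_e})$, pulling out the scalar $a_e$ and applying Lemma~\ref{Ldetcof} with $x = -1$ gives $\det(\dge) = a_e^{p_e}\big(\det(D^*_{G_e}) - \cof(D^*_{G_e})\big)$; the invariance of the cofactor-sum under adding a multiple of $J$ (Lemma~\ref{Ldetcof}) together with the homogeneity $\cof(cA) = c^{p_e-1}\cof(A)$ of the cofactor-sum of a $p_e \times p_e$ matrix gives $\cof(\dge) = a_e^{p_e-1}\cof(D^*_{G_e})$; and Theorem~\ref{Tmasterghh} itself records $\kappa(\dge) = a_e^{p_e-1}\det(D^*_{G_e})$. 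The key observation is that $\kappa(\dge)$ and $\cof(\dge)$ both carry exactly the factor $a_e^{p_e-1}$, whereas $\det(\dge)$ carries one extra power of $a_e$; this single discrepancy is the entire reason parts~(1) and~(2) differ.

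Next I would substitute these block-level formulas into the denominator-cleared forms of~\eqref{Emghh2}, \eqref{Emghh3}, \eqref{Emghh4}, namely $\kappa(\dg) = \prod_e \kappa(\dge)$, $\det(\dg) = \sum_e \det(\dge)\prod_{f \neq e}\kappa(\dgf)$, and $\cof(\dg) - \kappa(\dg) = \sum_e \big(\cof(\dge) - \kappa(\dge)\big)\prod_{f \neq e}\kappa(\dgf)$ — these being genuine identities over an arbitrary commutative ring $R$, since Theorem~\ref{Tmasterghh} is. This yields
\[
\kappa(\dg) = \Big(\prod_{e} a_e^{p_e-1}\Big)\prod_{e}\det(D^*_{G_e}),
\]
so $\kappa(\dg)$ is built only from $\{a_e : e \in E\}$ and $\{\det(D^*_{G_e}) : e \in E\}$, proving~(3). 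Similarly
\[
\cof(\dg) = \Big(\prod_{e} a_e^{p_e-1}\Big)\Big[\prod_{e}\det(D^*_{G_e}) + \sum_{e}\big(\cof(D^*_{G_e}) - \det(D^*_{G_e})\big)\prod_{f \neq e}\det(D^*_{G_f})\Big],
\]
in which the additive weights factor out cleanly as $\prod_e a_e^{p_e-1}$ and the bracket involves only the pairs $\{(\det(D^*_{G_e}), \cof(D^*_{G_e})) : e \in E\}$, proving~(2). Finally
\[
\det(\dg) = \sum_{e} a_e^{p_e}\big(\det(D^*_{G_e}) - \cof(D^*_{G_e})\big)\prod_{f \neq e} a_f^{p_f-1}\det(D^*_{G_f}),
\]
a polynomial in the triples $\{(a_e, \det(D^*_{G_e}), \cof(D^*_{G_e})) : e \in E\}$, proving~(1).

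I do not anticipate a genuine obstacle: once Theorem~\ref{Tmasterghh} and Lemma~\ref{Ldetcof} are available, the argument is a direct substitution. The only point requiring a little care is bookkeeping the powers of $a_e$ — verifying $\cof(cA) = c^{p_e-1}\cof(A)$, and noting that the extra factor $a_e$ appearing in $\det(\dge)$ (and nowhere among the other quantities) is exactly what obstructs $\det(\dg)$ from decoupling into $\{a_e\} \sqcup \{(\det, \cof)\}$ the way $\cof(\dg)$ does, so that the coupled triples are genuinely needed in~(1).
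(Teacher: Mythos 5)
Your proposal is correct and follows essentially the same route the paper intends: it evaluates $\kappa(\dge)$, $\det(\dge)$, $\cof(\dge)$ on a single block via Lemma~\ref{Ldetcof} and $\kappa(\dge)=a_e^{p_e-1}\det(D^*_{G_e})$, and substitutes into the denominator-cleared identities \eqref{Emghh2}--\eqref{Emghh4}, which is exactly the content of \eqref{Etemp} and Proposition~\ref{Pmasterghh} that the paper invokes as ``the present discussion.'' The explicit formulas you write out are precisely Proposition~\ref{Pmasterghh} with $\det(\dge)=a_e^{p_e}(\det(D^*_{G_e})-\cof(D^*_{G_e}))$ expanded, so nothing further is needed.
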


\begin{proof}[Proof of Theorem~\ref{Tmasterghh}]
(We will deduce all previous GHH identities in
Corollary~\ref{Cmasterghh}.)
We first assume~\eqref{Emghh1} and show that it implies the rest.
Clearly~\eqref{Emghh1} implies~\eqref{Emghh2}. We now show~\eqref{Emghh3}
and~\eqref{Emghh4} by induction on the number $|E|$ of strong blocks in
$G$. For $|E|=1$ the identities are tautologies; otherwise the graph $G$
has a ``pendant block'' (or ``end-block'') $G_e$, separated by a
cut-vertex $v_0$ from the rest of the subgraph; call it $G'$. But then by
\cite[Theorem D]{CK-tree}, we have
\begin{align*}
\frac{\det(\dg)}{\kappa(\dg, v_0)} = &\ 
\frac{\det(\dge)}{\kappa(\dge, v_0)} +
\frac{\det(D_{\mathcal{G}'})}{\kappa(D_{\mathcal{G}'}, v_0)},\\
\frac{\cof(\dg)}{\kappa(\dg,v_0)} - 1 = &\ 
\left( \frac{\cof(\dge)}{\kappa(\dge, v_0)} - 1 \right) +
\left( \frac{\cof(D_{\mathcal{G}'})}{\kappa(D_{\mathcal{G}'}, v_0)} - 1
\right).
\end{align*}
The last term on the right in both equations has denominator
$\kappa(D_{\mathcal{G}'})$ by the (assumed) identity~\eqref{Emghh1}, so
we are now done by the induction hypothesis, since $G'$ has fewer blocks
than $G$.

Finally, the identities in~\eqref{Emghh5} are shown as follows:
by~\eqref{Emghh4} and~\eqref{Etemp}, we obtain:
\[
\frac{\cof(\dg)}{\kappa(\dg)} - 1 = \sum_{e \in E} \left( 
\frac{\cof(D^*_{G_e})}{\det(D^*_{G_e})} - 1 \right).
\]
In particular, this equation also holds for the special case when $a_e =
1\ \forall e$. Here the right-hand side is unchanged, while as stated in
Remark~\ref{Rspecial}(2), now $\dg = D^*_G - J$, so the left-hand side is
\[
\frac{\cof(D^*_G - J)}{\det(D^*_G)} - 1 = 
\frac{\cof(D^*_G)}{\det(D^*_G)} - 1
\]
by Lemma~\ref{Ldetcof}. This shows the second of the identities
in~\eqref{Emghh5}, and hence the first.

\begin{remark}\label{Rzariski-short}
In this paper, the freedom to work with denominators that can vanish at
special values of the parameters (in an arbitrary commutative ring $R$)
is permissible via Zariski density arguments -- a technique introduced
in such settings by our prior work~\cite{CK-tree}. Concretely: note by
specializing that $\kappa(\dge, v_0), \kappa(D_{\mathcal{G}'}, v_0),
\kappa(\dg, v_0)$ are nonzero polynomials in the $a_e$ and the
off-diagonal entries of $D^*_{G_e}$ (over all $e \in E$). Thus we first
work over the field of rational functions in all of these variables --
with coefficients in $\mathbb{Q}$ -- and show that all assertions hold on
the nonzero-locus of a finite set of nonzero polynomials over an infinite
field. By Zariski density, the assertions hold on the entire affine
space. Now the equality of the two sides (in all claimed
identities/assertions) holds in the ring of polynomials in the same
variables (since there are no denominators) -- and with coefficients in
$\Z$. Finally, specialize the parameters to take values in $R$. We refer
the reader to~\cite{CK-tree} for specific examples of this procedure, and
omit such demonstrations in this paper for brevity.
\end{remark}

We now come to the meat of the proof, which is in showing~\eqref{Emghh1},
and we again do so by induction on the number $|E|$ of strong blocks in
$G$. For $E$ a singleton, write $\dg$ in the form
\[
\dg = a_1 (D^*_G - J) = a_1 \begin{pmatrix} D^*_1 - J & {\bf u}_1 - {\bf
e} \\ {\bf w}_1^T - {\bf e}^T & 0 \end{pmatrix},
\]
under a suitable ordering of the nodes (i.e.~with $v_0$ last), and where
${\bf e} = {\bf e}(V(G) \setminus \{ v_0 \})$. Then,
\[
\kappa(\dg, v_0) = a_1^{p_1 - 1} \det( D^*_1 - J -  ({\bf u}_1 - {\bf e})
{\bf e}^T - {\bf u}_1 ({\bf w}_1^T - {\bf e}^T)) = a_1^{p_1 - 1} \det
\begin{pmatrix} D^*_1 & {\bf u}_1\\ {\bf w}_1^T  & 1 \end{pmatrix},
\]
as desired. For the induction step, the first case is when $v_0$ a
cut-vertex of $G$. Let $G = G_1 \sqcup_{v_0} G_2$, with $V(G_1) = \{ 1,
\dots, v_0 \}$ and $V(G_2) = \{ v_0, \dots, n \}$. Set $V'_j := V(G_j)
\setminus \{ v_0 \}$ and write
\[
\dg := \begin{pmatrix}
a_1 (D_1^* - J) & a_1 ({\bf u}_1  - {\bf e}) & a_1 ({\bf u}_1 - {\bf
e}){\bf e}^T + a_2 {\bf u}_1 ({\bf w}_2^T - {\bf e}^T) \\
a_1 ({\bf w}_1^T - {\bf e}^T) & 0 & a_2 ({\bf w}_2^T - {\bf e}^T) \\
a_2 ({\bf u}_2 - {\bf e}) {\bf e}^T + a_1 {\bf u}_2 ({\bf w}_1^T - {\bf
e}^T) & a_2 ({\bf u}_2 - {\bf e}) & a_2 (D_2^* - J)
\end{pmatrix}.
\]
Then as asserted in the proof of \cite[Theorem D]{CK-tree}, a
straightforward computation shows that
\[
\kappa(\dg, v_0) = \kappa(D_{\mathcal{G}_1}, v_0)
\kappa(D_{\mathcal{G}_2}, v_0).
\]
In particular, if $v_0$ is a cut-vertex of $G$ then~\eqref{Emghh1}
follows by the induction hypothesis.

It remains to consider the case when $v_0$ is not a cut-vertex. Let
$G_{e_0} = G_0$ be the strong block of $G$ containing $v_0$, and let
$v_1, \dots, v_k$ be the cut-vertices of $G$ lying in $G_0$. Let $G_1,
\dots, G_k$ denote the maximal (pairwise disjoint) induced subgraphs of
$G$ such that $G_j$ intersects $G_0$ precisely at $v_j$:
\[
G = G_0 \sqcup_{v_1} G_1 \sqcup_{v_2} G_2 \cdots \sqcup_{v_k} G_k.
\]
For this graph, order the vertices in $k+1$ groups, each with a vertex
followed by a set of vertices:
\begin{equation}\label{Enodes}
v_0, \ V'_0 := V(G_0) \setminus \{ v_j \}; \quad v_1, \  V'_1 := V(G_1)
\setminus \{ v_1 \}; \quad \dots; \quad v_k, \ V'_k := V(G_k) \setminus
\{ v_k \}.
\end{equation}
Corresponding to these, we write $\dg$ as a $(2k+2) \times (2k+2)$ block
matrix, consisting of $(k+1)^2$-many $2 \times 2$ block matrices
$B_{ij} : 0 \leq i,j \leq k$, where
\begin{equation}
B_{ij} := \begin{cases}
a_i \begin{pmatrix}
0 & {\bf w}_i^T - {\bf e}^T \\ {\bf u}_i - {\bf e} & D_i - J
\end{pmatrix},
& \text{if } i = j \in \{ 0, \dots, k \},\\
\begin{pmatrix}
a_0 (m_{v_0 v_j} - 1) & a_0 (m_{v_0 v_j} - 1) {\bf e}^T + a_j m_{v_0
v_j} ({\bf w}_j^T - {\bf e}^T) \\
a_0 ({\bf m}_{V'_0, v_j} - {\bf e}) & 
a_0 ({\bf m}_{V'_0, v_j} - {\bf e}) {\bf e}^T +  a_j {\bf m}_{V'_0, v_j}
({\bf w}_j^T - {\bf e}^T)
\end{pmatrix},
& \text{if } 0 = i < j \leq k,\\
\begin{pmatrix}
a_0 (m_{v_i v_0} - 1) & a_0 ({\bf m}_{v_i, V'_0}^T - {\bf e}^T) \\
a_i ({\bf u}_i - {\bf e}) + a_0 (m_{v_i v_0} - 1) {\bf u}_i &
a_i ({\bf u}_i - {\bf e}) {\bf e}^T + a_0 {\bf u}_i ({\bf m}_{v_i,
V'_0}^T - {\bf e}^T) \\
\end{pmatrix},
& \text{if } 0 = j < i \leq k,\\
\begin{pmatrix}
a_0 (m_{v_i v_j} - 1) & \vline & a_0 (m_{v_i v_j} - 1) {\bf
e}^T + a_j m_{v_i, v_j} ({\bf w}_j^T - {\bf e}^T)\\
\hline
a_i ({\bf u}_i - {\bf e}) + & \vline & a_i ({\bf u}_i - {\bf e}) {\bf
e}^T + a_0 (m_{v_i v_j} - 1) {\bf u}_i {\bf e}^T + \\
a_0 (m_{v_i v_j} - 1) {\bf u}_i & \vline &
a_j m_{v_i v_j} {\bf u}_i ({\bf w}_j^T - {\bf e}^T)
\end{pmatrix},
& \text{if } 1 \leq i,j \leq k, \ i \neq j.
\end{cases}
\end{equation}

Recall here that $m_{v_i v_j}$ for $0 \leq i,j \leq k$ is the
corresponding entry in the multiplicative distance matrix $D^*_{G_0}$.
Now write $\dg = ( B_{ij} )_{i,j=0}^k$ in the form $\begin{pmatrix} 0 &
{\bf w}^T \\ {\bf u} & D \end{pmatrix}$; as a result,
\[
\kappa(\dg, v_0) = \det(D - {\bf u} {\bf e}^T - {\bf m}(V \setminus \{
v_0 \}, v_0) {\bf w}^T).
\]
Next, ${\bf u}^T, {\bf m}(V \setminus \{ v_0 \})^T, {\bf w}^T$ have
$V'_0; v_i, V'_i$ components (for $i \in [k]$) given respectively by:
\begin{align}
{\bf u}^T = &\ (a_0 ({\bf u}_0^T - {\bf e}^T); \quad
(a_0 (m_{v_i v_0} - 1), \ \
a_i ({\bf u}_i^T - {\bf e}^T) + a_i (m_{v_i v_0} - 1) {\bf
u}_i^T)_{i=1}^k),\notag\\
{\bf m}(V \setminus \{ v_0 \}, v_0)^T = &\ ({\bf u}_0^T; \quad
(m_{v_i v_0}, \ \ m_{v_i v_0} {\bf u}_i^T)_{i=1}^k),\\
{\bf w}^T = &\ (a_0 ({\bf w}_0^T - {\bf e}^T); \quad
(a_0 (m_{v_0 v_j} - 1), \ \ 
a_0 (m_{v_0 v_j} - 1) {\bf e}^T + a_j m_{v_0 v_j} ({\bf w}_j^T - {\bf
e}^T))_{j=1}^k).\notag
\end{align}

From this, we obtain the matrix 
$A_\mathcal{G} := {\bf u} {\bf e}^T + {\bf m}(V \setminus \{ v_0 \}, v_0)
{\bf w}^T)$ in block form as follows:
\begin{equation}
A_\mathcal{G} = \begin{pmatrix}
a_0 ({\bf u}_0 {\bf w}_0^T - J) & \vline &
a_0 (m_{v_0 v_j} {\bf u}_0 - {\bf e}) & \vline &
a_0 (m_{v_0 v_j} {\bf u}_0 - {\bf e}) {\bf e}^T +\\
& \vline & & \vline & a_j m_{v_0 v_j} {\bf u}_0 ({\bf w}_j^T - {\bf
e}^T)\\
\hline
a_0 (m_{v_i v_0} {\bf w}_0^T - {\bf e}^T) & \vline &
a_0 (m_{v_i v_0} m_{v_0 v_j} - 1) & \vline &
a_0 (m_{v_i v_0} m_{v_0 v_j} - 1) {\bf e}^T +\\
& \vline & & \vline & a_j m_{v_i v_0} m_{v_0 v_j} ({\bf w}_j^T - {\bf
e}^T)\\
\hline
a_i ({\bf u}_i - {\bf e}) {\bf e}^T + & \vline &
a_i ({\bf u}_i - {\bf e}) + & \vline &
a_i ({\bf u}_i - {\bf e}) {\bf e}^T +\\
a_0 {\bf u}_i (m_{v_i v_0} {\bf w}_0^T - {\bf e}^T) & \vline &
a_0 (m_{v_i v_0} m_{v_0 v_j} - 1) {\bf u}_i & \vline &
a_0 (m_{v_i v_0} m_{v_0 v_j} - 1) {\bf u}_i {\bf e}^T +\\
& \vline & & \vline & a_j m_{v_i v_0} m_{v_0 v_j} {\bf u}_i ({\bf w}_j^T
- {\bf e}^T)
\end{pmatrix}.
\end{equation}
Here, the rows are labelled by $V'_0; \ v_i, \ V'_i$ for $i \in [k]$, and
the columns by $V'_0; \ v_j, \ V'_j$ for $j \in [k]$. Note that the
square matrix $A_\mathcal{G}$ has dimension $|V(G)|-1$. Now the above
computations yield:
\[
\kappa(\dg, v_0) = \det(\dg - A_\mathcal{G}) = \det
((B'_{ij})_{i,j=0}^k),
\]
where $B'_{ij}$ has the same size as $B_{ij}$ for $i,j>0$, and $B'_{i0},
B'_{0j}$ have one less row and one less column respectively for all $0
\leq i,j \leq k$. Moreover, the $B'_{ij}$ are given by:
\begin{equation}
B'_{ij} := \begin{cases}
a_0 (D_0 - {\bf u}_0 {\bf w}_0^T),
& \text{if } i = j = 0,\\
({\bf m}_{V'_0,v_j} - m_{v_0 v_j} {\bf u}_0)
\begin{pmatrix} a_0 & a_0 {\bf e}^T + a_j ({\bf w}_j^T - {\bf
e}^T) \end{pmatrix},
& \text{if } 0 = i < j \leq k,\\
a_0 \begin{pmatrix} 1 \\ {\bf u}_i \end{pmatrix}
({\bf m}_{v_i,V'_0} - m_{v_i v_0} {\bf w}_0)^T,
& \text{if } 0 = j < i \leq k,\\
\gamma_{ij} \begin{pmatrix} 1 \\ {\bf u}_i \end{pmatrix}
\begin{pmatrix} a_0 & a_0 {\bf e}^T + a_j ({\bf w}_j^T - {\bf e}^T)
\end{pmatrix} + \delta_{ij} \begin{pmatrix} 0 & 0 \\ 0 & a_i (D_i - {\bf
u}_i {\bf w}_i^T) \end{pmatrix}, \quad
& \text{if } 1 \leq i,j \leq k,
\end{cases}
\end{equation}
where $\delta_{ij}$ denotes the Kronecker delta, and $\gamma_{ij} :=
m_{v_i v_j} - m_{v_i v_0} m_{v_0 v_j}$ for all $i,j \in [k]$, with the
understanding that $m_{v_i v_i} = 1$ in our notation/convention.

Now to compute $\det((B'_{ij})_{i,j=0}^k)$, we perform block row
operations, in which for each $i \in [k]$, from the $V'_i$-block row we
subtract ${\bf u}_i$ times the $v_i$-block row. This kills all entries
except the $(V'_i, V'_i)$-block entry, which is $a_i (D_i - {\bf u}_i
{\bf w}_i^T)$. By e.g.~``block upper triangularity'', it follows that
\begin{equation}\label{Etoprove}
\kappa(\dg, v_0) = \det(D - A_\mathcal{G}) = \prod_{i=1}^k \det(a_i
(D_i - {\bf u}_i {\bf w}_i^T)) \cdot \det A_0,
\end{equation}
for a particular square matrix $A_0$ (given below) of dimension
$|V(G_0)|-1$. Also note that since the graph~$G_i$ (with node set $V'_i
\sqcup \{ v_i \}$) has a smaller number of blocks, the induction
hypothesis yields
\[
\det(a_i (D_i - {\bf u}_i {\bf w}_i^T)) = \kappa(D_{\mathcal{G}_i}, v_i)
= \kappa(D_{\mathcal{G}_i}), \qquad \forall i \in [k].
\]

It thus remains to show that $\det(A_0) = a_0^{|V(G_0)|-1}
\det(D^*_{G_0}) = \kappa(D_{\mathcal{G}_0}, v_0) =
\kappa(D_{\mathcal{G}_0})$, where
\[
A_0 := a_0 \begin{pmatrix}
D_0 - {\bf u}_0 {\bf w}_0^T & {\bf m}_{V'_0,v_1} - m_{v_0 v_1} {\bf u}_0
& {\bf m}_{V'_0,v_2} - m_{v_0 v_2} {\bf u}_0 & \cdots & {\bf
m}_{V'_0,v_k} - m_{v_0 v_k} {\bf u}_0\\
{\bf m}_{v_1, V'}^T - m_{v_1 v_0} {\bf w}_0^T & 
\gamma_{11} & \gamma_{12} & \cdots & \gamma_{1k}\\
{\bf m}_{v_2, V'}^T - m_{v_2 v_0} {\bf w}_0^T & 
\gamma_{21} & \gamma_{22} & \cdots & \gamma_{2k}\\
\vdots & \vdots & \vdots & \ddots & \vdots\\
{\bf m}_{v_k, V'}^T - m_{v_k v_0} {\bf w}_0^T & 
\gamma_{k1} & \gamma_{k2} & \cdots & \gamma_{kk}
\end{pmatrix}.
\]
But it is straightforward to verify that if one writes $D^*_{G_0}$ with
the same labelling of the nodes $v_0, V'_0$, and subtracts a suitable
multiple of the first row from every other row (to cancel their leading
entries) -- or does the analogous column operations -- then the matrix
one obtains has leading column ${\bf e}_1$ (or row ${\bf e}_1^T$), and
the principal submatrix obtained by now removing the first row and column
is precisely $a_0^{-1} A_0$. Hence $\det(A_0) = a_0^{|V(G_0)|-1}
\det(D^*_{G_0})$, and by~\eqref{Etoprove} and the induction hypothesis,
the proof is complete.
\end{proof}

\subsection{Application 1: Previous GHH identities}

Theorem~\ref{Tmasterghh} has several applications; some are now listed.
First, the result explains all previously known GHH identities, including
in our recent work~\cite{CK-tree}. We begin with a straightforward
reformulation of the GHH identities in Theorem~\ref{Tmasterghh}:

\begin{prop}\label{Pmasterghh}
Notation as in Theorem~\ref{Tmasterghh}. Then:
\begin{align}
\det(\dg) = &\ \sum_{e \in E} \det(\dge) \prod_{f \neq e}
a_f^{p_f - 1} \det(D^*_{G_f}),\\
\cof(\dg) = &\ \det(D^*_G) \prod_{e \in E} a_e^{p_e - 1} + \sum_{e \in E}
(\cof(\dge) - a_e^{p_e - 1} \det(D^*_{G_e})) \prod_{f \neq e}
a_f^{p_f - 1} \det(D^*_{G_f}).
\end{align}
\end{prop}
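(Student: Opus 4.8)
The plan is to derive Proposition~\ref{Pmasterghh} directly from the ``master'' identities \eqref{Emghh1}--\eqref{Emghh5} of Theorem~\ref{Tmasterghh}, which is already in hand. The only work is algebraic bookkeeping: clearing the placeholder denominators $\kappa(\dg)$ and $\kappa(\dge)$ from \eqref{Emghh3} and \eqref{Emghh4}, and then substituting the closed-form $\kappa(\dge) = a_e^{p_e-1}\det(D^*_{G_e})$.

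For the first formula, I would start from \eqref{Emghh3}, namely $\det(\dg)/\kappa(\dg) = \sum_{e\in E}\det(\dge)/\kappa(\dge)$, and multiply both sides by $\kappa(\dg) = \prod_{e\in E}\kappa(\dge)$ (using \eqref{Emghh2}). The $e$-th summand becomes $\det(\dge)\cdot\prod_{f\neq e}\kappa(\dgf)$, and substituting $\kappa(\dgf) = a_f^{p_f-1}\det(D^*_{G_f})$ gives exactly the stated expression $\sum_{e\in E}\det(\dge)\prod_{f\neq e}a_f^{p_f-1}\det(D^*_{G_f})$. For the second formula, I would similarly multiply \eqref{Emghh4}, i.e.\ $\cof(\dg)/\kappa(\dg) - 1 = \sum_{e\in E}(\cof(\dge)/\kappa(\dge) - 1)$, through by $\kappa(\dg)$. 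The left side becomes $\cof(\dg) - \kappa(\dg)$, and the $e$-th summand on the right becomes $(\cof(\dge) - \kappa(\dge))\prod_{f\neq e}\kappa(\dgf) = (\cof(\dge) - a_e^{p_e-1}\det(D^*_{G_e}))\prod_{f\neq e}a_f^{p_f-1}\det(D^*_{G_f})$. Moving $\kappa(\dg) = \det(D^*_G)\prod_{e\in E}a_e^{p_e-1}$ (by \eqref{Emghh1}) to the right-hand side yields precisely the claimed identity for $\cof(\dg)$.

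Since every ingredient is a clean consequence of Theorem~\ref{Tmasterghh}, there is no real obstacle here; the proposition is essentially a restatement. The one point requiring a word of care is the legitimacy of manipulating the ``placeholder denominators'': over a general commutative ring $R$ the quantities $\kappa(\dge)$ need not be invertible, so strictly speaking \eqref{Emghh3}--\eqref{Emghh4} are shorthand for the denominator-free identities obtained after cross-multiplication. I would therefore phrase the argument as multiplying through and cancelling formally, or — as is done throughout the paper — invoke the Zariski density principle of Remark~\ref{Rzariski-short}: prove the polynomial identities over $\mathbb{Q}(\{a_e, m_{ij}\})$ where the $\kappa$'s are genuinely invertible, observe both sides are polynomials in the entries with $\Z$-coefficients, and specialize to arbitrary $R$. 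Either way the proof is short, so I would simply present the substitution and note that the denominators cancel as intended.
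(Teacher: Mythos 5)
Your proposal is correct and matches the paper's intent exactly: the paper states Proposition~\ref{Pmasterghh} as ``a straightforward reformulation of the GHH identities in Theorem~\ref{Tmasterghh}'' and gives no separate proof, the reformulation being precisely the denominator-clearing via $\kappa(\dg)=\prod_e\kappa(\dge)$ and the substitution $\kappa(\dge)=a_e^{p_e-1}\det(D^*_{G_e})$ that you carry out. Your remark on handling the placeholder denominators via Zariski density is also consistent with how the paper treats such expressions throughout.
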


As a consequence, we obtain the following application of our master GHH
identities:

\begin{cor}\label{Cmasterghh}
Proposition~\ref{Pmasterghh} implies the GHH identities for
multiplicative, $q$, and classical distance matrices, shown
in~\cite{CK-tree}, \cite{CK-tree}, and~\cite{GHH} respectively (following
previously shown special cases):
\begin{gather}
\begin{aligned}\label{Eghh12}
\hspace*{2mm}\det(D^*_G) = &\ \prod_{e \in E} \det(D^*_{G_e}),\\
\cof(D^*_G) = &\ \prod_{e \in E} \det(D^*_{G_e}) + \sum_{e \in E}
(\cof(D^*_{G_e}) - \det(D^*_{G_e})) \prod_{f \neq e} \det(D^*_{G_f}),
\end{aligned}\\
\begin{aligned}\label{Eghh34}
\det(D_q(G)) = &\ \sum_{e \in E} \det(D_q(G_e)) \prod_{f \neq e} d^*_f,\\
\cof(D_q(G)) = &\ \prod_{e \in E} d^*_e - (q-1) \sum_{e \in E}
\det(D_q(G_j)) \prod_{f \neq e} d^*_f,
\end{aligned}\\
\begin{aligned}\label{Eghh56}
\det(D_G) = &\ \sum_{e \in E} \det(D_{G_e}) \prod_{f \neq e}
\cof(D_{G_f}),\\
\cof(D_G) = &\ \prod_{e \in E} \cof(D_{G_e}).
\end{aligned}
\end{gather}
Here $D_q(G)$ is defined in Remark~\ref{Rspecial}(3), $D_G := D_1(G)$,
and $d^*_e := (q-1) \det(D_q(G_e)) + \cof(D_q(G_e))$.
\end{cor}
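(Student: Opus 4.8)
The plan is to derive each family of identities by a single explicit substitution into the master identities of Theorem~\ref{Tmasterghh} and their reformulation in Proposition~\ref{Pmasterghh}, with Lemma~\ref{Ldetcof} as the only auxiliary tool.

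First I would treat the multiplicative identities~\eqref{Eghh12}. Setting all additive edgeweights $a_e = 1$ turns $\dg$ into $D^*_G - J$ and each $\dge$ into $D^*_{G_e} - J_{p_e}$, by Remark~\ref{Rspecial}(2); then \eqref{Emghh1}--\eqref{Emghh2} collapse to $\det(D^*_G) = \prod_{e} \det(D^*_{G_e})$, the first line of~\eqref{Eghh12} (using $\kappa(\dg)=\det(D^*_G)$ and $\kappa(\dge)=\det(D^*_{G_e})$ at $a_e=1$). For the second line I would substitute $a_e = 1$ into the second formula of Proposition~\ref{Pmasterghh}, then use Lemma~\ref{Ldetcof} to replace $\cof(\dg) = \cof(D^*_G - J)$ by $\cof(D^*_G)$ (and likewise for the blocks) and $\det(D^*_G)\prod_e a_e^{p_e-1}$ by $\det(D^*_G)$; substituting the first line then yields the second.

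Next I would handle the $q$-identities~\eqref{Eghh34} via the substitution $m_{ij} = q^{\alpha_{ij}}$, $a_e = 1/(q-1)$ of Remark~\ref{Rspecial}(3), under which $\dg = D_q(G)$, $\dge = D_q(G_e)$, and $D^*_{G_e} = (q-1)D_q(G_e) + J_{p_e}$. The pivotal computation is to re-express the ``starred'' quantities appearing in Proposition~\ref{Pmasterghh} in terms of the $q$-distance block matrices: Lemma~\ref{Ldetcof} applied to $(q-1)D_q(G_e) + J$ gives $\det(D^*_{G_e}) = (q-1)^{p_e-1} d^*_e$, hence after the substitution $a_e^{p_e-1}\det(D^*_{G_e}) = \kappa(\dge) = d^*_e$, and then \eqref{Emghh2} gives $\kappa(\dg) = \prod_e d^*_e$. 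Plugging these into the two formulas of Proposition~\ref{Pmasterghh} produces the first line of~\eqref{Eghh34} directly, and the second line after rewriting $\cof(D_q(G_e)) - d^*_e = -(q-1)\det(D_q(G_e))$.

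Finally, the classical identities~\eqref{Eghh56} I would obtain as the $q \to 1$ specialization of~\eqref{Eghh34}: both sides of those identities are polynomials in $q$, so they remain valid at $q = 1$, where $D_q(G), D_q(G_e)$ become $D_G, D_{G_e}$, the explicit factor $(q-1)$ kills the correction sum, and $d^*_e \to \cof(D_{G_e})$. I expect the only real obstacle to be bookkeeping: tracking the powers of $(q-1)$ relating $D^*_{G_e}$ to $D_q(G_e)$, and confirming that although the weight $1/(q-1)$ is singular at $q = 1$, it enters only through quantities (namely $\kappa(\dge) = d^*_e$ and the products $\prod_{f\neq e} d^*_f$) that stay regular there --- which is exactly the polynomial-identity / Zariski-density principle of Remark~\ref{Rzariski-short}. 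No step beyond these substitutions and Lemma~\ref{Ldetcof} should be needed, since the only structural input, $\det(D^*_G) = \prod_e \det(D^*_{G_e})$, is already supplied by~\eqref{Emghh1}--\eqref{Emghh2}.
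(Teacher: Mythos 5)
Your proposal is correct, and for \eqref{Eghh12} and \eqref{Eghh56} it follows the paper exactly: set all $a_e=1$ and apply Proposition~\ref{Pmasterghh} with Lemma~\ref{Ldetcof}, then let $q\to 1$. The one place you diverge is \eqref{Eghh34}: the paper does not rederive it but instead cites \cite[Section 2]{CK-tree} for the implication \eqref{Eghh12}~$\Rightarrow$~\eqref{Eghh34}, whereas you substitute $a_e = 1/(q-1)$ and $m_{ij}=q^{\alpha_{ij}}$ directly into Proposition~\ref{Pmasterghh}. Your computation checks out: Lemma~\ref{Ldetcof} gives $\det(D^*_{G_e}) = (q-1)^{p_e-1}d^*_e$, hence $\kappa(\dge)=d^*_e$ and $\kappa(\dg)=\prod_e d^*_e$, and the identity $\cof(D_q(G_e)) - d^*_e = -(q-1)\det(D_q(G_e))$ turns the second formula of Proposition~\ref{Pmasterghh} into the second line of \eqref{Eghh34}. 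Your version is thus more self-contained, at the cost of redoing a computation the paper outsources; your handling of the apparent singularity of $a_e=1/(q-1)$ at $q=1$ (everything that survives is polynomial in $q$, so the identity persists at $q=1$) is exactly the Zariski-density principle of Remark~\ref{Rzariski-short} and matches the paper's intent.
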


As mentioned above, there are also GHH identities involving the invariant
$\kappa(\dg, v_0)$; these are subsumed by our main result in this section
-- see Theorem~\ref{Tmasterghh}.

\begin{proof}
Setting all $a_e = 1$, we observe that $D^*_G = \dg - J$ (see
e.g.~Remark~\ref{Rspecial}(2)).
Now the first two identities~\eqref{Eghh12} are easy consequences of
Proposition~\ref{Pmasterghh}, via Lemma~\ref{Ldetcof}. That these
identities~\eqref{Eghh12} for $D^*_G$ imply the identities~\eqref{Eghh34}
for $D_q(G)$ was shown in \cite[Section 2]{CK-tree}; and
specializing~\eqref{Eghh34} to $q \to 1$ yields the classical GHH
identities~\eqref{Eghh56} for $D_G = D_1(G)$.
\end{proof}

\subsection{Application 2: Hypertrees}\label{Shyper}

Our next application of Theorem~\ref{Tmasterghh} is to
\textit{hypertrees} -- graphs whose strong blocks are cliques. Here we
provide a novel additive-multiplicative model, which generalizes and
subsumes the known variants.
In our model, every block of the hypertree $G$ is a bidirected clique
$K_p$, with vertex set $V = [p] = \{ 1, \dots, p \}$. We begin with the
multiplicative component, in which every edge $i \to j$ is equipped with
both a ``head'' and ``tail'' contribution:
\begin{equation}
(D^*_{K_p})_{i \to j} := \begin{cases}
m_i m'_j, \qquad & \text{if } i \neq j,\\
1 & \text{otherwise}.
\end{cases}
\end{equation}

For example, one can take all $m_i = q, m'_i = 1$, leading to the
$q$-exponential distance matrix $D^*_{K_p}$ (hence to the $q$-distance
matrix $D_q(K_p)$ if we choose $a = 1/(q-1)$). We now have:

\begin{lemma}\label{Lhypertree-mult}
If ${\bf d}$ denotes the vector with $v$th component
$\frac{m_v m'_v}{1 - m_v m'_v}$, then 
\begin{align}
\begin{aligned}
\det D^*_{K_p} = &\ \prod_{v \in V} (1 - m_v m'_v) \cdot ( 1 + {\bf
e}(p)^T {\bf d} ),\\
\cof D^*_{K_p} = &\ \prod_{v \in V} (1 - m_v m'_v) \cdot \left( p +
{\bf e}(p)^T {\bf d} + \sum_{v<w} \frac{(m_v - m_w) (m'_v - m'_w)}{(1 - m_v
m'_v) (1 - m_w m'_w)} \right).
\end{aligned}
\end{align}
\end{lemma}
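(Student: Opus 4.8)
The plan is to compute $\det D^*_{K_p}$ and $\cof D^*_{K_p}$ directly by exploiting the rank structure of the matrix $D^*_{K_p}$. The key observation is that the off-diagonal entry $(D^*_{K_p})_{ij} = m_i m'_j$ is a rank-one pattern, so I would write $D^*_{K_p} = \diag(1 - m_i m'_i)_{i=1}^p + {\bf m}\,{\bf m}'^T$, where ${\bf m} = (m_1, \dots, m_p)^T$ and ${\bf m}' = (m'_1, \dots, m'_p)^T$; indeed the $(i,j)$ entry of the right side is $m_i m'_j$ off the diagonal and $(1 - m_i m'_i) + m_i m'_i = 1$ on it. Assuming via Zariski density (Remark~\ref{Rzariski-short}) that each $1 - m_v m'_v$ is invertible, I would set $\Lambda := \diag(1 - m_v m'_v)$ and factor $D^*_{K_p} = \Lambda(\Id + \Lambda^{-1} {\bf m}\,{\bf m}'^T)$.

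For the determinant, I would apply the matrix determinant lemma: $\det(\Id + \Lambda^{-1}{\bf m}\,{\bf m}'^T) = 1 + {\bf m}'^T \Lambda^{-1} {\bf m} = 1 + \sum_v \frac{m_v m'_v}{1 - m_v m'_v} = 1 + {\bf e}(p)^T {\bf d}$, so that $\det D^*_{K_p} = \prod_v (1 - m_v m'_v)\,(1 + {\bf e}(p)^T {\bf d})$, which is the first claimed formula. For the cofactor-sum, I would use Lemma~\ref{Ldetcof} in the form $\cof(A) = \det(A + xJ_p) - \det(A)$ evaluated by viewing $x$ as a formal variable, or more simply differentiate: $\cof(D^*_{K_p})$ is the coefficient of $x$ in $\det(D^*_{K_p} + x J_p)$. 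Writing $D^*_{K_p} + x J_p = \Lambda + {\bf m}\,{\bf m}'^T + x\,{\bf e}\,{\bf e}^T$, this is $\Lambda$ plus a rank-two update $U V^T$ with $U = ({\bf m} \mid {\bf e})$, $V = ({\bf m}' \mid x {\bf e})$, so by the rank-$k$ determinant lemma, $\det(D^*_{K_p} + xJ_p) = \det(\Lambda)\det\!\left(\Id_2 + V^T \Lambda^{-1} U\right)$, and the $2\times 2$ determinant expands to a polynomial in $x$ whose linear coefficient I would extract.

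Concretely, $V^T \Lambda^{-1} U = \begin{pmatrix} {\bf m}'^T\Lambda^{-1}{\bf m} & {\bf m}'^T \Lambda^{-1}{\bf e} \\ x\,{\bf e}^T\Lambda^{-1}{\bf m} & x\,{\bf e}^T\Lambda^{-1}{\bf e} \end{pmatrix}$, so $\det(\Id_2 + V^T\Lambda^{-1}U) = (1 + s)(1 + x t) - x\,u w$ where $s = \sum_v \frac{m_v m'_v}{1-m_v m'_v}$, $t = \sum_v \frac{1}{1-m_v m'_v}$, $u = \sum_v \frac{m_v}{1-m_v m'_v}$, $w = \sum_v \frac{m'_v}{1-m_v m'_v}$. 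The linear coefficient in $x$ is $(1+s)t - uw$, hence $\cof D^*_{K_p} = \prod_v(1-m_v m'_v)\,\big((1+s)t - uw\big)$. The remaining task — and the one requiring a genuine (if routine) algebraic identity — is to verify that $(1+s)t - uw$ equals $p + s + \sum_{v<w}\frac{(m_v - m_w)(m'_v - m'_w)}{(1-m_v m'_v)(1-m_w m'_w)}$. I would establish this by noting $t = \sum_v\big(1 + \frac{m_v m'_v}{1-m_v m'_v}\big) = p + s$, so $(1+s)t = (1+s)(p+s)$, and then showing $(1+s)(p+s) - uw - s = p + s + \sum_{v<w}(\cdots)$, i.e. $(1+s)(p+s) - uw - p - 2s = \sum_{v<w}(\cdots)$; expanding $uw = \sum_{v,w}\frac{m_v m'_w}{(1-m_vm'_v)(1-m_wm'_w)}$ and comparing the ``diagonal'' ($v=w$) and ``cross'' terms against the expansion of $(1+s)(p+s)$ should make the telescoping explicit. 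I expect this bookkeeping to be the main obstacle — not conceptually hard, but the place where signs and the symmetrization $\frac{1}{2}\sum_{v\neq w} = \sum_{v<w}$ must be handled with care. Finally, invoking Zariski density lifts the identity from the locus where all $1 - m_v m'_v$ are invertible to all of affine space, and then to an arbitrary commutative ring $R$.
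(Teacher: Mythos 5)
Your proposal is correct and starts from exactly the same decomposition as the paper, $D^*_{K_p} = \diag(1 - m_v m'_v) + {\bf m}\,({\bf m}')^T$, with the determinant obtained identically via the matrix determinant lemma. Where you diverge is the cofactor-sum: the paper inverts $D^*_{K_p}$ by Sherman--Morrison and uses $\cof(A) = \det(A)\, {\bf e}^T A^{-1} {\bf e}$, whereas you treat $D^*_{K_p} + xJ$ as a rank-two update of $\Lambda$ and read off the coefficient of $x$ from $\det(\Id_2 + V^T\Lambda^{-1}U)$ via Lemma~\ref{Ldetcof}. Both routes terminate in the same symmetrization identity (pairing the $(v,w')$ and $(w',v)$ terms of the double sum $st - uw$ to produce $\sum_{v<w}\frac{(m_v-m_w)(m'_v-m'_w)}{\lambda_v\lambda_w}$), and both need the same Zariski-density cleanup at the end; yours avoids writing down the inverse at the cost of a $2\times 2$ determinant, so the two are essentially equal in effort. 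One small slip in your bookkeeping: your correctly stated target is $(1+s)t - uw = p + s + \sum_{v<w}(\cdots)$, and since $t = p+s$ this is equivalent to $(1+s)(p+s) - uw - p - s = \sum_{v<w}(\cdots)$, not to the versions you wrote with an extra $-s$ (i.e.\ ``$-p-2s$''). Indeed $(1+s)(p+s) = p + s + st$, so the identity reduces cleanly to $st - uw = \sum_{v<w}(\cdots)$, whose diagonal terms vanish and whose off-diagonal pairs combine exactly as you anticipate; the double-sum expansion you propose would have exposed and corrected the stray $-s$.
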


The denominators in both right-hand sides (including in ${\bf d}$) are
understood to be placeholders, which cancel with factors of the products
on the right.

\begin{proof}
We sketch a proof. Let ${\bf m'} := (m'_v)_{v \in V}$. Then
$D^*_{K_p} = \diag (1 - m_v m'_v)_{v \in V} + {\bf m} ({\bf m'})^T$.
This easily yields $\det D^*_{K_p}$.
Next, the Sherman--Morrison formula for a rank-one update implies:
\[
(D^*_{K_p})^{-1} = \diag({\bf m'})^{-1} \cdot C \cdot \diag({\bf
m})^{-1}, \qquad \text{where }
C := \diag({\bf d}) - \frac{1}{1 + {\bf e}(p)^T {\bf d}} {\bf d} {\bf
d}^T,
\]
and from this one obtains $\cof(D^*_{K_p}) = \det(D^*_{K_p}) \cdot {\bf
e}(p)^T (D^*_{K_p})^{-1} {\bf e}(p)$, if $D^*_{K_p}$ is invertible. The
general case now follows by Zariski density, see
e.g.~Remark~\ref{Rzariski-short} or proofs of special cases
in~\cite{CK-tree}.
\end{proof}

From these computations, one derives the desired invariants for arbitrary
hypertrees:

\begin{prop}\label{Phypertree}
Suppose $G$ is a hypertree with strong blocks $G_e = K_{p_e}$ for $e \in
E$. Suppose the nodes of each clique are labelled by $[p_e]$, with
block-data given by $a_e, m_{e,v}, m'_{e,v}$ as above. Also define the
vector
$\displaystyle {\bf d}_e := \left(\frac{m_{e,v} m'_{e,v}}{1 - m_{e,v}
m'_{e,v}} \right)_{v \in [p_e]}$
for all blocks $e \in E$. Then,
\begin{align*}
\kappa(\dg) = &\ \prod_{e \in E} \left( a_e^{p_e - 1} (1 + {\bf e}(p_e)^T
{\bf d}_e) \prod_{v \in [p_e]} (1 - m_{e,v} m'_{e,v}) \right),\\
\det(\dg) = &\ \kappa(\dg) \left[ \sum_{e \in E} \frac{-a_e}{1 + {\bf
e}(p_e)^T {\bf d}_e} \left( p_e - 1 + \sum_{v<w \in [p_e]} \frac{(m_{e,v}
- m_{e,w}) (m'_{e,v} - m'_{e,w})}{(1 - m_{e,v} m'_{e,v}) (1 - m_{e,w}
m'_{e,w})} \right) \right],\\
\cof(\dg) = &\ \kappa(\dg) \left[ 1 + \sum_{e \in E} \frac{1}{1 + {\bf
e}(p_e)^T {\bf d}_e} \left( p_e - 1 + \sum_{v<w \in [p_e]} \frac{(m_{e,v}
- m_{e,w}) (m'_{e,v} - m'_{e,w})}{(1 - m_{e,v} m'_{e,v}) (1 - m_{e,w}
m'_{e,w})} \right) \right].
\end{align*}
\end{prop}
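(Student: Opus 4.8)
The plan is to derive Proposition~\ref{Phypertree} as a direct corollary of the Master GHH identities in Theorem~\ref{Tmasterghh}, applied to the per-block computations from Lemma~\ref{Lhypertree-mult}. Since each strong block $G_e$ is a clique $K_{p_e}$ with multiplicative matrix $D^*_{G_e} = D^*_{K_{p_e}}$ built from the data $m_{e,v}, m'_{e,v}$, Lemma~\ref{Lhypertree-mult} hands us $\det(D^*_{G_e})$ and $\cof(D^*_{G_e})$ in closed form. The identities~\eqref{Emghh1}--\eqref{Emghh5} then express $\kappa(\dg), \det(\dg), \cof(\dg)$ entirely in terms of these block quantities and the $a_e$, so the proof is essentially a bookkeeping exercise: substitute and simplify.

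First I would establish the $\kappa$ formula. By~\eqref{Emghh2} and the observation that $\kappa(\dge) = a_e^{p_e-1}\det(D^*_{G_e})$ (stated in Theorem~\ref{Tmasterghh}), we get $\kappa(\dg) = \prod_{e\in E} a_e^{p_e-1}\det(D^*_{G_e})$; plugging in the first line of Lemma~\ref{Lhypertree-mult} for $\det(D^*_{K_{p_e}})$ gives exactly the claimed product. Second, for $\det(\dg)$ I would use the reformulation of~\eqref{Emghh3} noted just after the proof of Theorem~\ref{Tmasterghh}, namely $\det(\dg)/\kappa(\dg) = \sum_{e\in E} a_e\bigl(1 - \cof(\dge)/\kappa(\dge)\bigr)$, together with the second identity in~\eqref{Etemp} which says $\cof(\dge)/\kappa(\dge) - 1 = \cof(D^*_{G_e})/\det(D^*_{G_e}) - 1$, i.e.\ $1 - \cof(\dge)/\kappa(\dge) = 1 - \cof(D^*_{G_e})/\det(D^*_{G_e})$. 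Hence $\det(\dg)/\kappa(\dg) = \sum_e a_e\bigl(1 - \cof(D^*_{K_{p_e}})/\det(D^*_{K_{p_e}})\bigr)$. Now I substitute the two formulas from Lemma~\ref{Lhypertree-mult}: the common product $\prod_v (1 - m_{e,v}m'_{e,v})$ cancels in the ratio, leaving $1 - \frac{p_e + {\bf e}^T{\bf d}_e + \sum_{v<w}(\cdots)}{1 + {\bf e}^T{\bf d}_e}$, which simplifies to $\frac{-(p_e-1) - \sum_{v<w}(\cdots)}{1 + {\bf e}^T{\bf d}_e}$ — exactly the bracketed summand in the stated $\det(\dg)$ formula once multiplied by $a_e$. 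Third, for $\cof(\dg)$ I would run the same substitution through~\eqref{Emghh4}: $\cof(\dg)/\kappa(\dg) - 1 = \sum_e\bigl(\cof(D^*_{K_{p_e}})/\det(D^*_{K_{p_e}}) - 1\bigr)$, and the identical cancellation yields $\cof(D^*_{K_{p_e}})/\det(D^*_{K_{p_e}}) - 1 = \frac{p_e - 1 + \sum_{v<w}(\cdots)}{1 + {\bf e}^T{\bf d}_e}$, giving the claimed expression for $\cof(\dg)$ after adding back $1$ and multiplying by $\kappa(\dg)$.

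There is no serious obstacle here — the conceptual work was already done in Theorem~\ref{Tmasterghh} and Lemma~\ref{Lhypertree-mult}. The only point requiring mild care is the handling of the placeholder denominators $1 - m_{e,v}m'_{e,v}$ and $1 + {\bf e}(p_e)^T{\bf d}_e$: strictly speaking the manipulations above are carried out over the field $\mathbb{Q}(\{a_e, m_{e,v}, m'_{e,v}\})$ where these are genuinely invertible, after which one clears denominators to obtain a polynomial identity with $\Z$-coefficients and specializes to an arbitrary commutative ring $R$, exactly as in Remark~\ref{Rzariski-short}. I would state this Zariski-density passage explicitly but not belabor it. One should also double-check the edge case $p_e = 1$ (a block that is a single vertex, i.e.\ a loop-free trivial clique), where all the inner sums are empty and the formulas reduce correctly; and note that the labelling of each clique by $[p_e]$ is immaterial since $\det(D^*_{G_e}), \cof(D^*_{G_e})$ and the symmetric sums $\sum_{v<w}$ are permutation-invariant.
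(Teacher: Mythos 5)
Your proposal is correct and follows essentially the same route as the paper: both proofs combine the per-block formulas for $\det(D^*_{K_{p_e}})$ and $\cof(D^*_{K_{p_e}})$ from Lemma~\ref{Lhypertree-mult} with the Master GHH identities of Theorem~\ref{Tmasterghh} (the paper packages the block data as $\det(\dge + xJ)$ via Lemma~\ref{Ldetcof}, whereas you work with the equivalent ratios from~\eqref{Etemp}, a cosmetic difference). The algebraic simplification of $1 - \cof(D^*_{K_{p_e}})/\det(D^*_{K_{p_e}})$ and the Zariski-density caveat are both handled correctly.
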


\begin{proof}
With the present notation, the additive-multiplicative matrix of the
block~$G_e$ is precisely $\dge := a_e (D^*_{G_e} - J)$ as above, for
each $e \in E$. For this matrix, Lemmas~\ref{Lhypertree-mult}
and~\ref{Ldetcof} imply:
\begin{align*}
\det(\dge + x J)
= &\ a_e^{p_e - 1} \prod_{v \in [p_e]} (1 - m_{e,v} m'_{e,v})\\
&\ \times \left[ x \cdot {\bf e}(p_e)^T {\bf d}_e + a_e + (x-a_e) \left(
p_e + \sum_{v<w} \frac{(m_{e,v} - m_{e,w}) (m'_{e,v} -
m'_{e,w})}{(1 - m_{e,v} m'_{e,v}) (1 - m_{e,w} m'_{e,w})} \right)
\right].
\end{align*}
Now apply Lemma~\ref{Lhypertree-mult} and Theorem~\ref{Tmasterghh} to
complete the calculations.
\end{proof}

As a sample corollary, specialize $m_{e,v} = m'_{e,v} = \sqrt{q}, a_e =
w_e/(q-1)$ for all $e \in E$ and $v \in [p_e]$. This yields the invariants
$\det(\dg), \cof(\dg), \kappa(\dg)$ for $\dg = D^w_q(G)$, the weighted
$q$-distance matrix of a hypertree -- both for general $q$ and for the
``classical'' case $q=1$:

\begin{cor}\label{Chypertree}
Let $G$ be a hypertree with strong blocks $K_{p_e}$, additive weights
$w_e / (q-1)$ for $e \in E$, and all multiplicative weights $m_{e,v} =
m'_{e,v} = \sqrt{q}$. Then,
\begin{align*}
\kappa(\dg) = &\ \prod_{e \in E} (-w_e)^{p_e - 1} (1 + (p_e - 1)q),\\
\det(\dg + xJ) = &\ \kappa(\dg) \left( x + \sum_{e \in E}
\frac{(p_e - 1)(w_e + x (1-q))}{1 + (p_e - 1)q} \right).
\end{align*}
\end{cor}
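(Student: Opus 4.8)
The plan is to obtain Corollary~\ref{Chypertree} by directly specializing the closed-form formulas of Proposition~\ref{Phypertree} to the values $m_{e,v} = m'_{e,v} = \sqrt{q}$ and $a_e = w_e/(q-1)$, for all $e \in E$ and $v \in [p_e]$. The first step is to record the effect of this substitution on the elementary quantities appearing in that proposition. Since $m_{e,v} m'_{e,v} = q$ for every block $e$ and node $v$, we have $1 - m_{e,v} m'_{e,v} = 1 - q$, so $\prod_{v \in [p_e]} (1 - m_{e,v} m'_{e,v}) = (1-q)^{p_e}$, and the vector ${\bf d}_e$ is the constant vector $\frac{q}{1-q}\,{\bf e}(p_e)$; hence ${\bf e}(p_e)^T {\bf d}_e = \frac{p_e q}{1-q}$ and $1 + {\bf e}(p_e)^T {\bf d}_e = \frac{1 + (p_e-1)q}{1-q}$. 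Moreover every ``cross term'' $\sum_{v<w} \frac{(m_{e,v} - m_{e,w})(m'_{e,v} - m'_{e,w})}{(1 - m_{e,v} m'_{e,v})(1 - m_{e,w} m'_{e,w})}$ vanishes identically, because all the $m_{e,v}$ (and all the $m'_{e,v}$) within a fixed block are equal. Finally $a_e = w_e/(q-1) = -w_e/(1-q)$.

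Next I would substitute these into the formula for $\kappa(\dg)$ in Proposition~\ref{Phypertree}: each factor becomes $\left( \frac{-w_e}{1-q} \right)^{p_e-1} \cdot \frac{1 + (p_e-1)q}{1-q} \cdot (1-q)^{p_e}$, and the net exponent of $(1-q)$ is $-(p_e-1) - 1 + p_e = 0$, so the factor collapses to $(-w_e)^{p_e-1}\,(1 + (p_e-1)q)$. This yields $\kappa(\dg) = \prod_{e \in E} (-w_e)^{p_e-1}(1 + (p_e-1)q)$, the first displayed identity. Here one uses the placeholder-denominator convention of Proposition~\ref{Phypertree} (the $(1-q)$ in the denominator of ${\bf d}_e$ is cancelled against the factor $\prod_v(1-q)$), so that after cancellation the identity is a genuine polynomial identity in $q$ and the $w_e$, valid in particular at $q = 1$; alternatively one simply invokes the Zariski-density principle of Remark~\ref{Rzariski-short}.

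For the determinant I would simplify the two ratios that recur in Proposition~\ref{Phypertree}: $\frac{-a_e}{1 + {\bf e}(p_e)^T {\bf d}_e} = \frac{w_e/(1-q)}{(1 + (p_e-1)q)/(1-q)} = \frac{w_e}{1 + (p_e-1)q}$ and $\frac{1}{1 + {\bf e}(p_e)^T {\bf d}_e} = \frac{1-q}{1 + (p_e-1)q}$. Since the cross terms vanish, Proposition~\ref{Phypertree} then gives $\det(\dg) = \kappa(\dg) \sum_{e \in E} \frac{w_e(p_e-1)}{1 + (p_e-1)q}$ and $\cof(\dg) = \kappa(\dg)\left( 1 + \sum_{e \in E} \frac{(1-q)(p_e-1)}{1 + (p_e-1)q} \right)$. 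Now Lemma~\ref{Ldetcof} gives $\det(\dg + xJ) = \det(\dg) + x\,\cof(\dg)$, and combining the two sums over $e$ turns the $e$-summand into $\frac{(p_e-1)(w_e + x(1-q))}{1 + (p_e-1)q}$, while the ``$1$'' inside $\cof(\dg)/\kappa(\dg)$ contributes the leading $x$; this is precisely the claimed expression for $\det(\dg + xJ)$.

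I do not expect any genuine obstacle here: the only points requiring care are (i) tracking the exponents of $(1-q)$ so that they cancel, (ii) keeping the sign $a_e = -w_e/(1-q)$ consistent throughout, and (iii) observing that the cancellations of $(1-q)$ factors are legitimate — which is exactly the setting that the placeholder-denominator convention and the Zariski-density argument of Remark~\ref{Rzariski-short} are designed to handle, so that the corollary holds over an arbitrary commutative ring and for every value of $q$, the case $q = 1$ included.
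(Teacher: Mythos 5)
Your proposal is correct and is exactly the route the paper intends: Corollary~\ref{Chypertree} is stated there as a direct specialization of Proposition~\ref{Phypertree} at $m_{e,v}=m'_{e,v}=\sqrt{q}$, $a_e=w_e/(q-1)$, and your bookkeeping of the $(1-q)$ exponents, the vanishing cross terms, and the final assembly via $\det(\dg+xJ)=\det(\dg)+x\cof(\dg)$ all check out. No gaps.
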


Recall~\cite{S-hypertrees} that a hypertree is \textit{$d$-regular} if
every strong block is a clique on $d$ nodes. The $d=2$ case (i.e., trees)
was worked out in the above generality in our prior work~\cite{CK-tree},
while Sivasubramanian~\cite{S-hypertrees} computed $\det(\dg)$, in the
special case $m_v = m'_v = \sqrt{q}$ for all blocks. Both results -- and
hence all prior variants for (hyper)trees -- are subsumed by
Corollary~\ref{Chypertree}, so by Proposition~\ref{Phypertree}.

\subsection{Application 3: Adding pendant hypertrees}

Our final application in this section extends a prior result of
Bapat--Kirkland--Neumann (for trees) and its subsequent generalizations
(e.g.~in~\cite{CK-tree} for additive-multiplicative trees) to
arbitrary hypertrees:

\begin{prop}\label{Paddcliques}
Let $k, p'_1, \dots, p'_k \geq 1$ be integers. For each $j \in [k]$ let
$G_j$ be a weighted bi-directed graph with node set $[p'_j]$, no
cut-vertices, and with additive-multiplicative distance matrix
$D_{\mathcal{G}_j} = a_j (D^*_{G_j} - J_{p'_j})$ that satisfies:
\begin{equation}\label{Eevector}
D_{\mathcal{G}_j} {\bf e}(p'_j) = d_j {\bf e}(p'_j), \qquad \forall j \in
[k],
\end{equation}
where $d_j \in R$ are scalars.
Now let $G'$ be any graph with strong blocks $G_1, \dots, G_k$, and let
$G$ be obtained from $G'$ by further attaching finitely many cliques
inductively -- i.e.~to a vertex of the graph constructed at each step, so
that these cliques are also strong blocks. Let these cliques have
block-data as in Proposition~\ref{Phypertree} (so they do not necessarily
satisfy~\eqref{Eevector}).

Then $\det(\dg), \cof(\dg)$ (and $\kappa(\dg)$) depend not on the
structure and locations of the attached cliques $G_e$, $e \in E$ or the
blocks $G_j$, $j \in [k]$ but only on their block-data, and as follows:
\begin{align}\label{Eaddcliques}
&\ \det(\dg + x J)\\
= &\ \prod_{j=1}^k \left( \det(D_{\mathcal{G}_j}) (a_j^{-1} +
(p'_j/d_j)) \right) \prod_{e \in E} \left( a_e^{p_e - 1} (1 + {\bf
e}(p_e)^T {\bf d}_e) \prod_{v \in [p_e]} (1 - m_{e,v} m'_{e,v}) \right)
\notag\\
&\ \times \left[ x + \sum_{j=1}^k \frac{a_j - x}{1 + \frac{a_j
p'_j}{d_j}} + \sum_{e \in E} \frac{x-a_e}{1 + {\bf e}(p_e)^T {\bf d}_e}
\left( p_e - 1 + \sum_{v<w \in [p_e]} \frac{(m_{e,v} - m_{e,w}) (m'_{e,v}
- m'_{e,w})}{(1 - m_{e,v} m'_{e,v}) (1 - m_{e,w} m'_{e,w})} \right)
\right]. \notag
\end{align}
\end{prop}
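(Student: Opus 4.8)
The plan is to reduce the whole statement to the Master GHH identities of Theorem~\ref{Tmasterghh} together with a handful of per-block computations, so that the only genuinely new ingredient is the analysis of the blocks $G_j$ satisfying~\eqref{Eevector}. Note first that the strong blocks of $G$ are exactly $G_1,\dots,G_k$ together with the attached cliques $\{G_e : e\in E\}$. By Lemma~\ref{Ldetcof}, $\det(\dg + xJ) = \det(\dg) + x\,\cof(\dg) = \kappa(\dg)\bigl(\det(\dg)/\kappa(\dg) + x\,\cof(\dg)/\kappa(\dg)\bigr)$, so it suffices to compute $\kappa(\dg)$, $\det(\dg)/\kappa(\dg)$ and $\cof(\dg)/\kappa(\dg)$. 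By~\eqref{Emghh2}--\eqref{Emghh4} these split over the blocks: $\kappa(\dg) = \prod_{j=1}^k \kappa(D_{\mathcal{G}_j})\cdot\prod_{e\in E}\kappa(\dge)$, while $\det(\dg)/\kappa(\dg) = \sum_j \det(D_{\mathcal{G}_j})/\kappa(D_{\mathcal{G}_j}) + \sum_e \det(\dge)/\kappa(\dge)$ and $\cof(\dg)/\kappa(\dg) = 1 + \sum_j\bigl(\cof(D_{\mathcal{G}_j})/\kappa(D_{\mathcal{G}_j}) - 1\bigr) + \sum_e\bigl(\cof(\dge)/\kappa(\dge) - 1\bigr)$. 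In particular the asserted independence from the structure and from the attachment points of the blocks is automatic, since every per-block quantity above visibly depends only on block-data.

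For the attached cliques $G_e$, the three per-block quantities $\kappa(\dge)$, $\det(\dge)/\kappa(\dge)$ and $\cof(\dge)/\kappa(\dge)-1$ are precisely the $|E|=1$ instances of Proposition~\ref{Phypertree} (equivalently, they are read off from Lemmas~\ref{Lhypertree-mult} and~\ref{Ldetcof} exactly as in the proof of that proposition). They contribute the factors $a_e^{p_e-1}(1+{\bf e}(p_e)^T{\bf d}_e)\prod_{v\in[p_e]}(1-m_{e,v}m'_{e,v})$ to $\kappa(\dg)$ and, upon combining the $\det$- and $\cof$-parts as in the first paragraph, the summands $\frac{x-a_e}{1+{\bf e}(p_e)^T{\bf d}_e}\bigl(p_e-1+\sum_{v<w}(\cdots)\bigr)$ appearing in~\eqref{Eaddcliques}.

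The new computation concerns the blocks $G_j$. Writing $D_{\mathcal{G}_j} = a_j(D^*_{G_j}-J_{p'_j})$, condition~\eqref{Eevector} is equivalent to $D^*_{G_j}{\bf e}(p'_j) = s_j\,{\bf e}(p'_j)$ with $s_j := d_j/a_j + p'_j$. Applying $\adj(\cdot)$ to each eigenvector identity and using $\adj(A)A = \det(A)\Id$ gives $\det(D_{\mathcal{G}_j}){\bf e} = d_j\,\adj(D_{\mathcal{G}_j}){\bf e}$ and $\det(D^*_{G_j}){\bf e} = s_j\,\adj(D^*_{G_j}){\bf e}$; dividing by $d_j$ resp.\ $s_j$ (legitimate when these are invertible) and contracting with ${\bf e}^T$, using ${\bf e}^T{\bf e}=p'_j$, yields $\cof(D_{\mathcal{G}_j}) = \frac{p'_j}{d_j}\det(D_{\mathcal{G}_j})$ and $\cof(D^*_{G_j}) = \frac{p'_j}{s_j}\det(D^*_{G_j})$. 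Substituting the latter into~\eqref{Etemp} (now for the block $j$) gives $\det(D_{\mathcal{G}_j})/\kappa(D_{\mathcal{G}_j}) = a_j(1-p'_j/s_j) = \frac{a_j}{1+a_j p'_j/d_j}$, whence $\cof(D_{\mathcal{G}_j})/\kappa(D_{\mathcal{G}_j}) - 1 = \frac{-1}{1+a_j p'_j/d_j}$ and $\kappa(D_{\mathcal{G}_j}) = \det(D_{\mathcal{G}_j})(a_j^{-1}+p'_j/d_j)$. These are exactly the $G_j$-contributions in~\eqref{Eaddcliques}: the factor $\det(D_{\mathcal{G}_j})(a_j^{-1}+p'_j/d_j)$ in the product out front, and the summand $\frac{a_j}{1+a_j p'_j/d_j} + x\cdot\frac{-1}{1+a_j p'_j/d_j} = \frac{a_j-x}{1+a_j p'_j/d_j}$. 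Assembling $\det(\dg)+x\,\cof(\dg) = \kappa(\dg)\bigl(x + \sum_j(\cdots) + \sum_e(\cdots)\bigr)$, with the leading $x$ coming from the ``$1$'' in $\cof(\dg)/\kappa(\dg)$, reproduces~\eqref{Eaddcliques}.

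Finally, for validity over an arbitrary commutative unital ring: the divisions (by $d_j$, $s_j$, $a_j$, $1+a_j p'_j/d_j$, $1+{\bf e}(p_e)^T{\bf d}_e$, and the $1-m_{e,v}m'_{e,v}$) and the invertibility used in the adjugate step are valid only generically, but each divisor is a nonzero polynomial in the block-parameters, which (since the constraints~\eqref{Eevector} are linear) still range over an affine space. So by the Zariski density argument of Remark~\ref{Rzariski-short}, once all placeholder denominators are cleared the result becomes a polynomial identity with integer coefficients and hence holds over every $R$. I expect the only delicate point to be the bookkeeping of these placeholder denominators — in particular tracking how the factors $d_j$ cancel between $\kappa(D_{\mathcal{G}_j})$ and the bracket — since the structural content is immediate from Theorem~\ref{Tmasterghh} and Proposition~\ref{Phypertree}.
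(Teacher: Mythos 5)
Your proof is correct and follows exactly the route the paper indicates for its (omitted) proof: the Master GHH identities of Theorem~\ref{Tmasterghh} split $\kappa$, $\det/\kappa$, and $\cof/\kappa$ over the strong blocks, the clique contributions come from Lemma~\ref{Lhypertree-mult} as in Proposition~\ref{Phypertree}, and your adjugate computation correctly converts the eigenvector hypothesis~\eqref{Eevector} into $\cof(D^*_{G_j})=(p'_j/s_j)\det(D^*_{G_j})$, yielding the per-block summand $\frac{a_j-x}{1+a_jp'_j/d_j}$ and the factor $\det(D_{\mathcal{G}_j})(a_j^{-1}+p'_j/d_j)$. The Zariski-density remark about the constrained parameter space still being an affine space is the right way to justify the placeholder denominators.
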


We make three concluding remarks: first, the proof of
Proposition~\ref{Paddcliques} involves straightforward computations using
Theorem~\ref{Tmasterghh} and Proposition~\ref{Phypertree}, hence is
omitted.
Second, the $a_j^{-1}$ in the first product on the right is a placeholder
(when working over an arbitrary commutative unital ring).
Indeed, the entire factor in the first product is precisely
$\kappa(D_{\mathcal{G}_j}) = a_j^{p_j - 1} \det(D^*_{G_j})$.

Third, Proposition~\ref{Paddcliques} subsumes all of the known results in
the literature along these lines, including for trees (see the results
and references in~\cite{CK-tree}) as well as
Proposition~\ref{Phypertree} -- but also covering the special case of
cycle-clique graphs. These are graphs whose strong blocks are cliques or
cycles, the latter of which fit the description of the $G_j$ -- in that
they have ${\bf e}$ as an eigenvector for $D_{\mathcal{G}_j}$.
Cycle-clique graphs include unicyclic, bicyclic, polycyclic (including
cactus) graphs as well as (hyper)trees -- see~\cite[Section 2]{CK-tree}
and the references therein for the numerous special cases studied in the
literature. A final comment is that the literature studied only the
special case of $D_q(G)$ (the $q$-distance matrix); but our result holds
more generally for \textit{all} additive-multiplicative $\dg$.

\section{Theorem~\ref{Tminors}: The three invariants for minors of
additive-multiplicative distance matrices}\label{S3}

In recent work~\cite{CK-tree}, in addition to computing $\det(\dg)$ and
$\cof(\dg)$ in terms of the strong blocks of $G$, we also focused on
trees, and computed these invariants for certain ``admissible'' minors of
the additive-multiplicative distance matrix.

The goal in this section is to show that these formulas extend to
similarly admissible minors for an arbitrary graph $G$ -- where we remove
rows and columns corresponding to nodes such that what remains is a
connected induced subgraph.
Moreover, we also compute the third invariant above -- namely,
$\kappa$ -- at these minors of $\dg$ and at arbitrary nodes $v_0$, and
show the same dependence purely on the block-data. We begin with
$\det(\cdot)$ and $\cof(\cdot)$, via the second main result of this
paper:

\begin{utheorem}\label{Tminors}
Suppose $G$ has strong blocks $\{ G_e : e \in  E \}$, vertex set $V$, and
block-datum $\mathcal{G} = \{ \mathcal{G}_e = (a_e, D^*_{G_e}) : e \in E
\}$. Let $I,J'$ denote subsets of nodes of $G$ satisfying:
(a)~$|I| = |J'| \leq |V|-3$;
(b)~every vertex in $I \setminus J'$ is connected to $V \setminus I$
through a unique cut-vertex in $V \setminus (I \cup J')$ -- and similarly
upon exchanging $I$ and $J'$;
(c)~the induced subgraphs on $V \setminus I,\ V \setminus J',\ V
\setminus (I \cap J')$ are strongly connected.
Now let $E_\circ := E_{(I \cap J')^c}$ index the strong blocks $G_e$ in
the induced directed subgraph on the vertices $V \setminus (I \cap J')$.

As an additional notation, given a $V \times V$ matrix $D$, let $D_{I|J'}$
the submatrix formed by removing the rows and columns labelled by $I,J'$
respectively. Then $\det (\dg + xJ)_{I|J'}$ depends on the block-data but
not on the block-structure:
\begin{align}\label{Eminor-detcof}
\det (\dg + xJ)_{I|J'} \
& = \ \ \begin{cases}
\prod_{e \in E_\circ} \kappa(\dge) \left[ x + \sum_{e \in
E_\circ} (a_e - x) \left( 1 - \frac{\cof(\dge)}{\kappa(\dge)}
\right) \right], & \text{if } |I \Delta J'| = 0,\\
\displaystyle \prod_{e \in E_\circ \setminus \{ \{ p(i_0), i_0 \}, \{
j_0, p(j_0) \} \}} \kappa(\dge) \cdot (-1)^{|V(G)|-1} a_{\{ p(i_0), i_0
\}}\\
\qquad \qquad \times (a_{\{ j_0, p(j_0) \} } - x)
(m_{(p(i_0), i_0)} - 1) (m_{(j_0, p(j_0))} - 1), \ & \text{if } |I
\Delta J'| = 2,\\
0, &\text{if } |I \Delta J'| > 2.
\end{cases}
\end{align}
Here, the denominators (for $I = J'$) are simply placeholders to cancel
with a factor in the numerators.
We also assume that if $|I \Delta J'| = 2$, then the nodes $i_0, j_0$ are
given by $I \setminus J' = \{ i_0 \}, \ J' \setminus I = \{ j_0 \}$.
In this case, $i_0, j_0$ are pendant in the induced subgraph on $V
\setminus (I \cap J')$, and $p(i_0), p(j_0)$ are their unique respective
neighbors in $V \setminus (I \cap J')$.
\end{utheorem}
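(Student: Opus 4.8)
\emph{Proof proposal.} The plan is to follow the strategy used for trees in~\cite{CK-tree}, now powered by the block machinery of Theorem~\ref{Tmasterghh}. By Remark~\ref{Rzariski-short} it suffices to prove~\eqref{Eminor-detcof} over the rational function field in the $a_e$ and the off-diagonal entries of the $D^*_{G_e}$ --- where every $\kappa(\dge)$ is invertible --- and then specialize. Since each principal submatrix of $J$ is again an all-ones matrix, $(\dg + xJ)_{I|J'} = (\dg)_{I|J'} + xJ$. The first step is a \emph{restriction lemma}: hypotheses (a)--(c) are arranged precisely so that removing the rows and columns indexed by $I\cap J'$ turns $\dg$ into the genuine additive-multiplicative distance matrix $D_{\mathcal{H}}$ of the strongly connected induced subgraph $\mathcal{H} := G[V\setminus(I\cap J')]$ with an inherited block-datum --- its strong blocks being exactly those indexed by $E_\circ$, each an induced sub-block of some $G_e$ carrying the restricted $D^*$-datum, and the ``product across cut-vertices'' recursion of $\dg$ restricting consistently. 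Granting this, $(\dg+xJ)_{I|J'} = (D_{\mathcal{H}}+xJ)_{(I\setminus J')\,|\,(J'\setminus I)}$, where by hypothesis (b) the sets $I\setminus J'$ and $J'\setminus I$ are disjoint collections of vertices pendant in $\mathcal{H}$.

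The case $|I\Delta J'| = 0$ is then immediate: here $I = J'$ and $(\dg+xJ)_{I|I} = D_{\mathcal{H}}+xJ$, so one feeds $\mathcal{H}$ into Theorem~\ref{Tmasterghh}. By Lemma~\ref{Ldetcof}, $\det(D_{\mathcal{H}}+xJ) = \det(D_{\mathcal{H}}) + x\,\cof(D_{\mathcal{H}}) = \kappa(D_{\mathcal{H}})\bigl(\det(D_{\mathcal{H}})/\kappa(D_{\mathcal{H}}) + x\,\cof(D_{\mathcal{H}})/\kappa(D_{\mathcal{H}})\bigr)$; substituting $\kappa(D_{\mathcal{H}}) = \prod_{e\in E_\circ}\kappa(\dge)$ from~\eqref{Emghh2}, $\det(D_{\mathcal{H}})/\kappa(D_{\mathcal{H}}) = \sum_{e\in E_\circ} a_e\bigl(1 - \cof(\dge)/\kappa(\dge)\bigr)$ (the restatement of~\eqref{Emghh3} following~\eqref{Etemp}), and $\cof(D_{\mathcal{H}})/\kappa(D_{\mathcal{H}}) = 1 + \sum_{e\in E_\circ}\bigl(\cof(\dge)/\kappa(\dge) - 1\bigr)$ from~\eqref{Emghh4}, and regrouping, yields exactly $\prod_{e\in E_\circ}\kappa(\dge)\bigl[x + \sum_{e\in E_\circ}(a_e - x)(1 - \cof(\dge)/\kappa(\dge))\bigr]$, the first line of~\eqref{Eminor-detcof}.

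For $|I\Delta J'| > 2$, write $B := J'\setminus I = \{j_1,\dots,j_k\}$ with $k\geq 2$, and let $p(j_\alpha)$ be the unique $\mathcal{H}$-neighbour of the pendant vertex $j_\alpha$. From $d_{j_\alpha, s} = d_{j_\alpha, p(j_\alpha)} + m_{j_\alpha,p(j_\alpha)}\,d_{p(j_\alpha),s}$ one sees that in $D_{\mathcal{H}}+xJ$ the row indexed by $j_\alpha$ is $m_{j_\alpha,p(j_\alpha)}$ times the row indexed by $p(j_\alpha)$ plus a scalar multiple of ${\bf e}^T$; by hypothesis (b) no $j_\alpha$ or $p(j_\alpha)$ lies in $I\setminus J'$, so all these rows survive in $(D_{\mathcal{H}}+xJ)_{(I\setminus J')\,|\,B}$. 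Hence the rows of that minor indexed by $B\cup\{p(j_1),\dots,p(j_k)\}$ all lie in the span of the at most $k$ rows indexed by $\{p(j_1),\dots,p(j_k)\}$ together with ${\bf e}^T$, a subspace of dimension $\leq k+1$ --- strictly less than $|B\cup\{p(j_1),\dots,p(j_k)\}|$ once $k\geq 2$, even if the $p(j_\alpha)$ coincide --- so these rows are linearly dependent and the determinant vanishes.

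The substantive case is $|I\Delta J'| = 2$: say $I\setminus J' = \{i_0\}$, $J'\setminus I = \{j_0\}$, with pendant $\mathcal{H}$-neighbours $p(i_0), p(j_0)$, so that $(\dg+xJ)_{I|J'} = (D_{\mathcal{H}}+xJ)_{\{i_0\}|\{j_0\}}$. Here the plan is to peel off the two pendant vertices one at a time. Splitting the determinant along column $i_0$, using that this column of $D_{\mathcal{H}}+xJ$ equals the $p(i_0)$-column plus $d_{p(i_0),i_0} = a_{\{p(i_0),i_0\}}(m_{(p(i_0),i_0)}-1)$ times the $p(i_0)$-column of the multiplicative matrix $D^*_{\mathcal{H}}$, kills one summand (two equal columns) and extracts the factor $a_{\{p(i_0),i_0\}}(m_{(p(i_0),i_0)}-1)$; then the row operation replacing row $j_0$ by (row $j_0$)$\, - m_{j_0,p(j_0)}\cdot$(row $p(j_0)$), invoking multiplicativity $m_{j_0,w} = m_{j_0,p(j_0)}m_{p(j_0),w}$ through the cut-vertex $p(j_0)$, collapses row $j_0$ to a constant multiple of ${\bf e}^T$ off the $i_0$-column, extracting $(m_{(j_0,p(j_0))}-1)(a_{\{j_0,p(j_0)\}} - x)$ --- the $xJ$-shift being exactly what turns $a_{\{j_0,p(j_0)\}}$ into $a_{\{j_0,p(j_0)\}} - x$. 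After clearing row $j_0$, what remains is a determinant over the core $\mathcal{H}\setminus\{i_0,j_0\}$ in the ``$D|_{\,\cdot\,} - {\bf u}\,{\bf e}^T - {\bf m}\,{\bf w}^T$'' form of Definition~\ref{Dkappa}, hence equal by~\eqref{Emghh1}--\eqref{Emghh2} to $\prod_{e\in E_\circ\setminus\{\{p(i_0),i_0\},\{j_0,p(j_0)\}\}}\kappa(\dge)$; the net sign from the two peelings is $(-1)^{|V(G)|-1}$, giving the second line of~\eqref{Eminor-detcof}. The hard part will be the bookkeeping in this last case: making the two peeling steps compatible (each must leave a matrix to which the other step still applies), identifying the residual determinant with the core's $\kappa$-invariant on the nose, and tracking the sign --- together with the verification, already needed at the outset, that hypotheses (a)--(c) really do force the block-compatible restriction to $\mathcal{H}$.
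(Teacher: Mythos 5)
Your overall route is the same as the paper's: reduce to the case $I \cap J' = \emptyset$ by passing to the induced subgraph on $V \setminus (I \cap J')$; dispose of $I = J'$ via Theorem~\ref{Tmasterghh} and Lemma~\ref{Ldetcof}; kill $|I \Delta J'| > 2$ by exhibiting linearly dependent rows after subtracting $m_{w,v}$-multiples of cut-vertex rows; and handle $|I \Delta J'| = 2$ by peeling the two pendant vertices via one column and one row operation and recognizing the residue as the bordered determinant $\det\begin{pmatrix} {\bf m}(V',p(i_0)) & \dg|_{V'} + xJ \\ 0 & {\bf e}^T\end{pmatrix}$, which is $(\pm)\kappa$ of the core (the paper isolates this as Lemma~\ref{Lminors}). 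Your bookkeeping of where the $-x$ attaches (row operation yes, column operation no) is right, and the $I=J'$ algebra checks out.

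There is, however, one false intermediate claim: that hypothesis~(b) makes \emph{every} vertex of $I \Delta J'$ pendant in $\mathcal{H}$. That is true only when $|I \Delta J'| = 2$, and even there it requires the short argument via $G_{p(i_0)\to i_0} \subset \{p(i_0)\} \sqcup I$ that the paper supplies. When $|J'\setminus I| \geq 2$ the vertices of $J'\setminus I$ may form a chain hanging off a single cut-vertex (e.g.\ the path $1\text{--}2\text{--}\cdots\text{--}6$ with $I=\{1,2\}$, $J'=\{5,6\}$: vertex $5$ is not pendant, and the unique neighbour of the pendant vertex $6$ is $5\in J'$). With your literal definition of $p(j_\alpha)$ as ``the unique neighbour of the pendant $j_\alpha$,'' the dimension count $|B\cup\{p(j_1),\dots,p(j_k)\}| > k+1$ can fail, since some $p(j_\alpha)$ may land inside $B$. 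The repair is immediate and is what (b) actually gives you: take $p(j_\alpha)$ to be the unique cut-vertex in $V\setminus(I\cup J')$ through which $j_\alpha$ reaches $V\setminus J'$; then $d_{j_\alpha,s} = d_{j_\alpha,p(j_\alpha)} + m_{j_\alpha,p(j_\alpha)}d_{p(j_\alpha),s}$ for every surviving column $s$, the set $\{p(j_\alpha)\}$ is disjoint from $B$, and the $k+m$ rows indexed by $B\cup\{p(j_\alpha)\}$ lie in a span of dimension at most $m+1 < k+m$ once $k\geq 2$. This is exactly the configuration the paper's sub-case~(2) treats by two successive row operations; your single span-count, once corrected, is a slightly cleaner packaging of the same idea.
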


\noindent We also compute the invariant $\kappa((\dg)_{I|J'}, v_0)$, in
Proposition~\ref{Pkappa}. This is new even for trees.

Notice that in addition to the original case of $\det(\dg + xJ)$ when $I
= J' = \emptyset$, Theorem~\ref{Tminors} subsumes
\cite[Theorem~A]{CK-tree}. The latter is because now $G$ is a tree, so
every $v \in I \setminus J'$ is indeed connected to $V \setminus I$
through a unique cut-vertex $v_0 \not\in I$. If $v_0 \in J'$, consider
the node $x \sim v_0$ that lies along the path from $v_0$ to $v$.
Disconnecting the edge $x - v_0$ separates $V$ into $I$ and $J'$, which
contradicts the hypotheses of \cite[Theorem~A]{CK-tree}.
(Similarly if $I$ and $J'$ are exchanged.)

\begin{remark}\label{Rdodgson}
Our recent work \cite[Theorem~A]{CK-tree} showed the special case for
trees. The $|I \Delta J'| = 2$ case in it was proved using multiple
ingredients:
(i)~the case of $|I \Delta J'| > 2$,
(ii)~the case of $|I \Delta J'| = 0$ (which is
precisely~\eqref{Emaster}),
(iii)~Dodgson condensation (i.e.~the Desnanot--Jacobi identity), and
(iv)~Zariski density.
Our alternate proof below for $|I \Delta J'| = 2$ and all graphs, avoids
using \textit{all} of these ingredients, and works using just the
definitions and the multiplicativity of $\kappa$. Thus it uses simpler
ingredients (modulo the longwinded computations proving~\eqref{Emghh1} in
Theorem~\ref{Tmasterghh}); it is conceptually more transparent even for
trees; and it applies more generally -- in fact to all graphs.
\end{remark}

Before proving the theorem, we mention another special case,
where $x = a_e = 1\ \forall e$:

\begin{cor}
Let $I \neq J'$ be as in Theorem~\ref{Tminors}. Then $\det(D^*_G)_{I|J'}
= 0$.
\end{cor}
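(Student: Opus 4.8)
The plan is to obtain this corollary as a direct specialization of Theorem~\ref{Tminors}, taking $x = 1$ and $a_e = 1$ for every $e \in E$. Recall from Remark~\ref{Rspecial}(2) that when all $a_e = 1$ one has $\dg = D^*_G - J$; since extracting a submatrix is a linear operation and $J$ restricts to an all-ones matrix of the appropriate (smaller) size, this gives $(\dg + xJ)_{I|J'} = (D^*_G)_{I|J'} + (x-1)J$, which at $x = 1$ equals exactly $(D^*_G)_{I|J'}$. Hence it suffices to check that the right-hand side of~\eqref{Eminor-detcof} vanishes at this specialization whenever $I \neq J'$.

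First I would observe that hypothesis (a) of Theorem~\ref{Tminors} forces $|I| = |J'|$, whence $|I \setminus J'| = |J' \setminus I|$ and so $|I \Delta J'|$ is even; together with $I \neq J'$ this means $|I \Delta J'| \in \{ 2, 4, 6, \dots \}$. Thus only the second and third cases of~\eqref{Eminor-detcof} can occur. In the third case, where $|I \Delta J'| > 2$, the determinant is identically $0$, so nothing further is needed. In the remaining case $|I \Delta J'| = 2$, the right-hand side of~\eqref{Eminor-detcof} is a product one of whose factors is $(a_{\{ j_0, p(j_0) \}} - x)$; setting $a_e = 1$ for all $e$ and $x = 1$ makes this factor equal $1 - 1 = 0$, so the whole product vanishes and $\det(D^*_G)_{I|J'} = 0$ here as well.

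I expect no genuine obstacle in this argument beyond the routine bookkeeping of the rank-one update and the parity remark, since all the substance is already contained in Theorem~\ref{Tminors}. The one point to state carefully is that the substitution $x = 1$, $a_e \equiv 1$ is legitimate over an arbitrary commutative unital ring: this is immediate because~\eqref{Eminor-detcof} is a polynomial identity in $x$ and the block-data, the apparent denominators (in the case $I = J'$, which is not the one at hand) being mere placeholders as noted in the statement of the theorem.
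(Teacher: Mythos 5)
Your proposal is correct and matches the paper's intent exactly: the corollary is stated there precisely as the specialization $x = a_e = 1$ for all $e$ of Theorem~\ref{Tminors}, with the vanishing coming from the factor $(a_{\{ j_0, p(j_0) \}} - x)$ when $|I \Delta J'| = 2$ and from the identically zero case when $|I \Delta J'| > 2$. Your parity observation that $|I| = |J'|$ rules out odd values of $|I \Delta J'|$ is the right (and only) bookkeeping needed.
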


\begin{proof}[Proof of Theorem~\ref{Tminors}]
Note that we may replace $G$ by the induced subgraph on the nodes in
$E_\circ$, i.e.~in $\bigcup_{e \in E_\circ} V(G_e)$. In other words, we
may assume $E = E_\circ$, or equivalently, that $I, J'$ are disjoint. Now
if $I = J'$ then the result reduces to Theorem~\ref{Tmasterghh}; see also
the discussion following its statement.

Henceforth assume $I \Delta J'$ is nonempty. The first claim is that
there exist $w \in J' \setminus I$ and $v \in V \setminus (I \cup J')$
such that $v \sim w$. For if not, consider any such $w,v$ for which $d(v,
w)$ is minimal -- here, $d(v,w)$ denotes the shortest path distance using
unweighted, bidirected edges. Then $v \sim w$, for if not then all paths
from $v$ to $w$ contain a node in $I$, contradicting the connectedness of
$V \setminus I$.

There are two other observations here: first, if $d(v,w)$ is minimal then
$v$ is a cut-vertex by assumption~(b). Thus $v$ disconnects the graph;
moreover, in the graph block corresponding to $w$, there cannot be a node
in $V \setminus J'$ by assumption~(b).

We next work out the (shorter) case of $|I \Delta J'| > 2$. There are two
sub-cases:

\begin{enumerate}
\item Suppose there are two such vertices $w_1 \neq w_2 \in J' \setminus
I$ that are adjacent to (cut) vertices $v_1, v_2 \in V \setminus (I \cup
J')$. Note here that $v_1$ may equal $v_2$. In this case, the $w_1, w_2,
v_1, v_2$ rows (with the columns for $J'$ removed) occur in $(\dg +
xJ)_{I|J'}$.

Given a cut-vertex $v$ and a node $w \neq v$, let $G_{v \to w}$ be the
unique maximum induced subgraph of $G$ that contains $v,w$ and for which
$v$ is not a cut-vertex. Then
(i)~the nodes in $G_{v_l \to w_l} \setminus \{ v_l \}$ for $l=1,2$ are
necessarily contained in $J'$ by the hypotheses, hence
(ii)~the corresponding columns are deleted from $(\dg + xJ)_{I|J'}$.

Now let the truncated row of $(\dg + xJ)_{I|J'}$ (with columns indexed by
$V \setminus J'$) corresponding to $v \in V$ be denoted by ${\bf d}^T_v =
(d(v, w) + x)_{w \in V}$.
Since ${\bf d}^T_{w_l}, {\bf d}^T_{v_l}$ indeed occur for $l=1,2$, the
determinant of $(\dg + xJ)_{I|J'}$ remains unchanged if one first
performs the row operations
\[
{\bf d}^T_{w_l} \mapsto {\bf d}^T_{w_l} - m_{w_l v_l} {\bf d}^T_{v_l},
\qquad l=1,2.
\]
But this gives a matrix with two proportional rows, since
\[
{\bf d}^T_{w_l} - m_{w_l v_l} {\bf d}^T_{v_l} = (a_{e_l} - x) (m_{w_l
v_l} - 1) {\bf e}^T, \qquad l=1,2,
\]
where $e_l$ is the unique block of $G$ containing the edge $\{ w_l, v_l
\}$. Hence $\det (\dg + xJ)_{I|J'} = 0$.

\item If the previous scenario does not occur, then there is a unique
cut-vertex $v_0 \in V \setminus (I \cup J')$ and a unique node $w_0 \in
J' \setminus I$ such that $w_0 \sim v_0$ and all other nodes in $J'$ are
connected to $v_0$ through $w_0$. Choose $w_1 \in J' \setminus (I \sqcup
\{ w_0 \})$ that is closest in the (unweighted, undirected) shortest path
distance to $w_0$. If $w_1$ is not adjacent to $w_0$, then
(i)~all nodes on all (unweighted, undirected) paths between $w_0 \in J'$
and $w_1 \in J'$ lie in $J'$, since $V \setminus J'$ is connected; and
(ii)~every path from $w_1$ to $w_0$ contains a node in $I$, and hence in
$I \cap J'$ by~(i). But then $w_0, w_1 \in J' \setminus I \subset V
\setminus I$ are separated by $I$, contradicting the hypotheses.

It follows that $w_1 \longleftrightarrow w_0 \longleftrightarrow v_0$.
Moreover, the columns corresponding to the nodes in $G_{v_0 \to w_0}
\setminus \{ v_0 \}$ are removed in $(\dg + xJ)_{I|J'}$, while the rows
corresponding to $w_1, w_0, v_0$ are all present. Hence as in the
previous case, the determinant of the minor $(\dg + xJ)_{I|J'}$ remains
unchanged upon carrying out -- in order -- the following row operations:
\[
{\bf d}^T_{w_1} \mapsto {\bf d}^T_{w_1} - m_{w_1 w_0} {\bf d}^T_{w_0},
\qquad
{\bf d}^T_{w_0} \mapsto {\bf d}^T_{w_0} - m_{w_0 v_0} {\bf d}^T_{v_0}.
\]
As above, this gives a matrix with two proportional rows, since
\[
{\bf d}^T_{w_1} - m_{w_1 w_0} {\bf d}^T_{w_0} = (a_{e_1} - x) (m_{w_1
w_0} - 1) {\bf e}^T, \qquad
{\bf d}^T_{w_0} - m_{w_0 v_0} {\bf d}^T_{v_0} = (a_{e_0} - x) (m_{w_0
v_0} - 1) {\bf e}^T,
\]
where $e_0, e_1$ are the blocks of $G$ containing the edges $\{ v_0, w_0
\}$ and $\{ w_0, w_1 \}$ respectively. It again follows that $\det (\dg +
xJ)_{I|J'} = 0$.
\end{enumerate}

The final case is when $| I \Delta J' | = 2$, so that we can assume
\[
I = I \setminus J' = \{ i_0 \}, \qquad
J' = J' \setminus I = \{ j_0 \}.
\]
Let $i_0, j_0$ be connected to $V \setminus I, V \setminus J'$ through
the cut-vertices $p(i_0), p(j_0)$ respectively. By the same reasoning as
above,
\[
\{ i_0, p(i_0) \} \subset G_{p(i_0) \to i_0} \subset \{ p(i_0) \} \sqcup
(I \setminus J') = \{ p(i_0) \} \sqcup I = \{ i_0, p(i_0) \},
\]
so $G_{p(i_0) \to i_0} = \{ i_0, p(i_0) \}$. The analogous results also
hold for $j_0$, so $i_0, j_0$ are pendant as claimed.

It remains to show~\eqref{Eminor-detcof}. Without loss of generality, let
\[
i_0 = 1, \quad j_0 = n, \quad V' := \{ 2, \dots, n-1 \} = V(G) \setminus
\{ i_0, j_0 \}.
\]
Also note from above that the edges $\{ i_0, p(i_0) \}, \{ j_0, p(j_0)
\}$ are strong blocks of $G$; we denote the corresponding additive
block-data by $a_1$ and $a_n$ respectively.

Using the definitions, it follows that
\[
\dg + xJ = \begin{pmatrix}
x & {\bf w}^T + x {\bf e}^T & d_{1,n} + x\\
{\bf d}(V', p(1)) + a_1 (m_{p(1),1} - 1) {\bf m}(V', p(1)) + x {\bf e} &
\dg|_{V' \times V'} + xJ & {\bf u} + x {\bf e}\\
d_{n,p(1)} + a_1 (m_{p(1),1}-1) m_{n,p(1)} + x & {\bf d}(n,V')^T + x {\bf
e}^T & x
\end{pmatrix}
\]
for a suitable choice of vectors ${\bf u}, {\bf w} \in R^{n-2}$. First
remove the first row and last column of this matrix. Next, the
determinant of the truncated matrix is unchanged upon subtracting the
$p(1)$th column from the first, so it suffices to compute the determinant
of
\[
D' := \begin{pmatrix}
a_1 (m_{p(1),1} - 1) {\bf m}(V', p(1)) & \dg|_{V' \times V'} + xJ\\
a_1 (m_{p(1),1}-1) m_{n,p(n)} m_{p(n), p(1)} & {\bf d}(n,V')^T + x {\bf
e}^T
\end{pmatrix}.
\]
To compute $\det(D')$, the factor $a_1 (m_{p(1),1}-1)$ can be taken out
of the first column. Next, since
\[
{\bf d}(n,V') = a_n (m_{n,p(n)} - 1) {\bf e} + m_{n,p(n)} {\bf
d}(p(n),V'),
\]
we subtract $m_{n,p(n)}$ times the $p(n)$th row of $D'$ from its final
row, to obtain:
\[
\det (\dg + xJ)_{1|n} = a_1 (m_{p(1),1}-1) \det \begin{pmatrix}
{\bf m}(V',p(1)) & \dg|_{V' \times V'} + xJ \\
0 & (a_n - x) (m_{n,p(n)}-1) {\bf e}^T
\end{pmatrix}
\]

We think of this $2 \times 2$ block matrix as relating to the graph $G'$
with two vertices and two edges removed, i.e., with nodes~$V'$ and strong
blocks precisely the strong blocks of $G$ with two fewer edge-blocks.
Now take the factor $(a_n - x) (m_{n,p(n)}-1)$ out of the last row, and
use the following identity~\eqref{Enew-identity}, which we isolate into a
standalone lemma of possible independent interest, to obtain:
\[
\det (\dg + xJ)_{1|n} = a_1 (m_{p(1),1}-1) \cdot 
(a_n - x) (m_{n,p(n)}-1) \cdot \kappa(\dg|_{V' \times V'}).
\]
But $G'$ has the same strong blocks as $G$ (with two fewer edges), so
by~\eqref{Emghh2} we obtain the desired result~\eqref{Eminor-detcof} for
$|I \Delta J'| = 2$.
\end{proof}

The following identity was used in the closing arguments above.

\begin{lemma}\label{Lminors}
Suppose $G$ is a graph with strong blocks $\{ G_e : e \in E \}$ and
block-datum $\mathcal{G} = \{ (a_e, D^*_{G_e}) \in R^{1 + |V|^2} : e
\in E \}$ as above. Fix $v \in V$, and say $x$ commutes with $R$. Then,
\begin{equation}\label{Enew-identity}
M(x) := \begin{pmatrix} {\bf m}(V, v) & \dg + xJ \\ 0 & {\bf e}^T
\end{pmatrix} \quad \implies \quad \det(M(x)) = (-1)^{|V|-1}
\kappa(\dg).
\end{equation}
\end{lemma}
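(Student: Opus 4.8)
The plan is to evaluate $\det(M(x))$ head-on by a short chain of elementary row and column operations, reducing it to $\pm\det(K)$, where
\[
K := \dg|_{V \setminus \{v\}} - {\bf u}_1\,{\bf e}(V \setminus \{v\})^T - {\bf m}(V \setminus \{v\}, v)\,{\bf w}_1^T
\]
is exactly the matrix whose determinant defines $\kappa(\dg, v)$ in Definition~\ref{Dkappa}; the final step is then to invoke Theorem~\ref{Tmasterghh} to rewrite $\kappa(\dg,v)$ as $\kappa(\dg)$. Since only integer row/column operations and cofactor expansions occur -- no divisions -- the argument goes through verbatim over an arbitrary commutative ring, so no Zariski density is needed here. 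To set up, I would put $n := |V|$, relabel the nodes so that $v$ is last, write $V' := V \setminus \{v\}$, and split $\dg$ with last row ${\bf w}_1^T = {\bf d}(v, V')$ and last column ${\bf u}_1 = {\bf d}(V', v)$; then ${\bf m}(V,v)$ has $V'$-part ${\bf m}(V', v)$ and last entry $m_{vv} = 1$, and $M(x)$ becomes an $(n+1)\times(n+1)$ matrix with first column ${\bf m}(V,v)$ (followed by a $0$), the $n\times n$ block $\dg + xJ$, and a bottom row $(0,\ {\bf e}(n-1)^T,\ 1)$.

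The computation then splits into three steps. \textbf{Step 1:} since $xJ_n = x\,{\bf e}(n)\,{\bf e}(n)^T$, subtracting $x$ times the all-ones bottom row from each of the first $n$ rows cancels every occurrence of $x$, leaving the $x$-free matrix
\[
M' = \begin{pmatrix} {\bf m}(V',v) & \dg|_{V'} & {\bf u}_1 \\ 1 & {\bf w}_1^T & 0 \\ 0 & {\bf e}(n-1)^T & 1 \end{pmatrix}, \qquad \det M(x) = \det M';
\]
in particular $\det M(x)$ is independent of $x$, as the claimed formula demands. \textbf{Step 2:} subtracting the last column $({\bf u}_1;\,0;\,1)^T$ from each $V'$-column turns the $(V',V')$-block into $\dg|_{V'} - {\bf u}_1{\bf e}(n-1)^T$ and kills the bottom row of that block; then, for each $j \in V'$, subtracting $({\bf w}_1)_j$ times the first column $({\bf m}(V',v);\,1;\,0)^T$ from the $j$-th $V'$-column turns the $(V',V')$-block into $K$ and kills the $v$-row of that block. \textbf{Step 3:} the resulting matrix has bottom row ${\bf e}_{n+1}^T$ and $v$-row $(1,0,\dots,0)$, so expanding the determinant along the bottom row (cofactor sign $+1$) and then along the last row of the remaining $n\times n$ matrix (cofactor sign $(-1)^{n+1}$) leaves exactly $\det(K)$. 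Hence $\det M(x) = (-1)^{n+1}\det(K) = (-1)^{|V|-1}\kappa(\dg, v)$, and Theorem~\ref{Tmasterghh} lets me write this as $(-1)^{|V|-1}\kappa(\dg)$.

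I do not expect a genuine obstacle here: the entire argument is elementary linear algebra, and the only thing demanding care is keeping straight which row and column each block entry sits in, together with the two cofactor signs in Step 3. The one observation that makes it all click is that the single extra all-ones row does double duty -- it annihilates the $xJ$ perturbation ``for free'' in Step 1, and, together with the first column, it supplies precisely the two rank-one corrections ${\bf u}_1{\bf e}^T$ and ${\bf m}(V',v){\bf w}_1^T$ appearing in the definition of $\kappa$ -- which is exactly why the reduction terminates on $\det(K)$.
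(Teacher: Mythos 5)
Your proof is correct and follows essentially the same route as the paper's: first use the appended all-ones row to cancel the $xJ$ perturbation, then perform elementary column/row eliminations to reduce the determinant to $\det\bigl(\dg|_{V'} - {\bf u}_1 {\bf e}^T - {\bf m}(V',v){\bf w}_1^T\bigr) = \kappa(\dg,v)$, and finally invoke Theorem~\ref{Tmasterghh} to replace $\kappa(\dg,v)$ by $\kappa(\dg)$. The only (cosmetic) difference is that the paper pivots on the $v$th column and $v$th row of $M(0)$ itself, whereas you pivot on the appended first column and the $v$-column; the bookkeeping and the sign $(-1)^{|V|-1}$ come out the same either way.
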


\begin{proof}
Label the rows and columns of the matrix $M(x)$ by $(V; \infty)$ and
$(\infty; V)$ respectively. First pre-multiply the final row by $x {\bf
e}$ and subtract this from the upper $1 \times 2$ block-submatrix of
$M(x)$. This shows that $\det(M(x)) = \det(M(0))$, and so we work with
$x=0$ henceforth.

Subtract the $v$th column of $M(0)$ from all columns indexed by nodes in
$V \setminus \{ v \}$. Note that since $\dg$ has $(v,v)$-entry $0$,
this leaves unchanged the $v$th row of $M(0)$.
Call the new matrix $M'$, and subtract $m(w,v)$-times the $v$th row of
$M'$ from the $w$th row, for each $w \in V \setminus \{ v \}$. Now the
first column and the last row (both indexed by $\infty$) are standard
basis vectors. Expanding along the first column and the last row, and
using~\eqref{Ekappa}, we obtain $\det(M(x)) = (-1)^{|V|-1} \kappa(\dg)$,
as claimed.
\end{proof}

\subsection{Vanishing of $\kappa$ for minors}

Having computed $\det(\dg)_{I|J'}$ and $\cof(\dg)_{I|J'}$ in
Theorem~\ref{Tminors}, we complete the set by proving:

\begin{prop}\label{Pkappa}
Suppose $\mathcal{G}$ and $I, J' \subset V(G)$ are as in
Theorem~\ref{Tminors}. Let $v_0 \in V \setminus (I \cup J')$. Then:
\begin{equation}
\kappa((\dg)_{I|J'}, v_0) = \begin{cases}
\prod_{e \in E_\circ} \kappa(\dge), \qquad
&\text{if } I = J',\\
0 &\ \text{otherwise},
\end{cases}
\end{equation}
where $E_\circ = E_{(I \cap J')^c}$ is as in Theorem~\ref{Tminors}.
In particular, this too is independent of $v_0 \in V \setminus (I \cup
J')$.
\end{prop}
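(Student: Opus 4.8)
The plan is to reduce to the two regimes already handled in the proof of Theorem~\ref{Tminors}: the case $I = J'$ and the case $I \neq J'$. As in that proof, one first observes that $\kappa$ only sees the block-structure on $V \setminus (I \cap J')$, so we may delete the common nodes and assume $I, J'$ are disjoint. When $I = J' = \emptyset$ we are reduced to Theorem~\ref{Tmasterghh}, which gives $\kappa(\dg, v_0) = \prod_{e \in E_\circ} \kappa(\dge)$ and, crucially, its independence of $v_0$. This handles the first case (after the reduction step); the content is entirely in the second case, $I \Delta J' \neq \emptyset$.

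For the case $I \neq J'$, I would imitate the row-operation argument used in the $|I \Delta J'| > 2$ branch of the proof of Theorem~\ref{Tminors}, but now applied to the matrix whose determinant \emph{defines} $\kappa((\dg)_{I|J'}, v_0)$ via~\eqref{Ekappa}. Concretely, $\kappa((\dg)_{I|J'}, v_0)$ is the determinant of $(\dg)_{I|J'}$ with the $v_0$-row and $v_0$-column removed and a rank-two correction $-{\bf u}_1 {\bf e}^T - {\bf m}(\cdot, v_0){\bf w}_1^T$ subtracted. The key point, exactly as before, is that there exist $w \in J' \setminus I$ and a cut-vertex $v \in V \setminus (I \cup J')$ with $v \sim w$, such that every node of $G_{v \to w} \setminus \{v\}$ lies in $J'$; hence the $w$-column of the matrix is \emph{not} deleted (it is indexed by $V \setminus J'$, i.e. by a \emph{row}-index set that retains $w$) while all columns of $G_{v\to w}\setminus\{v\}$ are deleted. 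Wait — more carefully: in the $\kappa$-matrix the surviving row-indices are $(V \setminus (I \cup \{v_0\}))$ and the surviving column-indices are $(V \setminus (J' \cup \{v_0\}))$, so the $w$-row survives (as $w \notin I$) and the $v$-row survives (as $v \notin I$). I would then perform the row operation ${\bf d}_w \mapsto {\bf d}_w - m_{wv}\, {\bf d}_v$ on the rows of $(\dg)_{I|J'}$ \emph{before} subtracting the rank-two correction, checking that the correction term transforms compatibly; by the definition $d_{wj} = d_{wv} + m_{wv} d_{vj}$ for every $j$ reachable through $v$ — and all surviving columns are so reachable, since the only columns that would not be are those in $G_{v \to w} \setminus \{v\}$, which have been deleted — one gets that the modified $w$-row is a constant multiple of ${\bf e}^T$ on the $D$-part. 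A short check shows the corresponding entries of ${\bf u}_1$ and of ${\bf m}(\cdot,v_0)$ conspire so that after the correction the $w$-row is still proportional to a fixed vector; since both the $v$-row and $w$-row survive, one obtains two proportional rows (or one identically zero row), forcing $\kappa((\dg)_{I|J'}, v_0) = 0$.

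The main obstacle I anticipate is bookkeeping the interaction of the row operation ${\bf d}_w \mapsto {\bf d}_w - m_{wv} {\bf d}_v$ with the rank-two correction $-{\bf u}_1 {\bf e}^T - {\bf m}(\cdot,v_0){\bf w}_1^T$ appearing in~\eqref{Ekappa}: one must verify that the $w$-entry of ${\bf m}(V' \setminus \{v_0\}, v_0)$ equals $m_{wv}$ times its $v$-entry (which follows from the multiplicativity $m_{w v_0} = m_{wv} m_{v v_0}$ when $v$ lies on the path from $w$ to $v_0$ — true since $v$ is the unique cut-vertex separating $w$ from the rest), and likewise track ${\bf u}_1, {\bf w}_1$. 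Once this compatibility is confirmed, the proportional-rows conclusion is immediate. The independence of $v_0$ in the vanishing case is automatic (the value is $0$ regardless), and in the case $I=J'$ it is inherited from Theorem~\ref{Tmasterghh}. I would also remark that, as in Remark~\ref{Rzariski-short}, any division by $\kappa(\dge)$ implicit in invoking~\eqref{Emghh2} is justified by Zariski density over $\mathbb{Q}(\{a_e, m_{ij}\})$ and then specialized to $R$.
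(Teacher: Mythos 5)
Your proposal is correct and follows essentially the same route as the paper: the same reduction to disjoint $I, J'$, the same choice of $w \in J' \setminus I$ adjacent to a cut-vertex $v \in V \setminus (I \cup J')$ with $G_{v \to w} \setminus \{ v \} \subset J'$, and the same row operation ${\bf d}_w^T \mapsto {\bf d}_w^T - m_{wv} {\bf d}_v^T$; the only difference is that the paper packages the rank-two correction in~\eqref{Ekappa2} via a bordered-matrix identity (the generalization~\eqref{Ekappa3} of Lemma~\ref{Lminors}), so that the modified $w$-row becomes proportional to the appended row $(0, {\bf e}^T)$, whereas you verify the compatibility of the correction by hand -- where it in fact makes the $w$-row identically zero, since $d(w,v_0) - m_{wv}\, d(v,v_0) = d(w,v)$ exactly cancels the constant $D$-part. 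One small point to patch: nothing rules out $v = v_0$, in which case the $v$-row is absent from the matrix in~\eqref{Ekappa2} and no row operation is available -- but then the same identities show the $w$-row of $D_1 - {\bf u}_1 {\bf e}^T - {\bf m}(\cdot, v_0) {\bf w}_1^T$ is already identically zero, so the conclusion stands.
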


Notice from Definition~\ref{Dkappa} that $\kappa$ is not yet defined for
non-principal submatrices of $\dg$. We begin by doing so; now the final
part of Proposition~\ref{Pkappa} is in the spirit of
Theorem~\ref{Tmasterghh}. (In fact, Theorem~\ref{Tmasterghh} mostly
involved showing that $\kappa((\dg)_{I|J'}, v_0)$ is independent of
$v_0$, for $I = J' = \emptyset$.)

\begin{definition}
Suppose $G$ is a graph with additive-multiplicative block-datum
$\mathcal{G}$, and $I,J' \subset V(G)$ be equal-sized subsets. Let $v_0
\in V \setminus (I \cup J')$, and write
\[
(\dg)_{I|J'} = \begin{pmatrix} D_1 & {\bf u}_1 \\ {\bf w}_1^T & 0
\end{pmatrix},
\]
where $D_1 = (\dg)_{(I \cup \{ v_0 \})^c \times (J' \cup \{ v_0 \})^c}$.
Now define:
\begin{equation}\label{Ekappa2}
\kappa((\dg)_{I|J'}, v_0) := \det \left( D_1 - {\bf u}_1 \, {\bf
e}((J' \cup \{ v_0 \})^c)^T - {\bf m}((I \cup \{ v_0 \})^c, v_0) {\bf
w}_1^T \right).
\end{equation}
\end{definition}

Notice this generalizes the special case of $I=J'$ (i.e., $\kappa(\dg,
v_0)$) in Definition~\ref{Dkappa}; however, we did not mention this more
general case earlier because it is only applied (in this paper) in
proving Proposition~\ref{Pkappa}. Indeed, the more restrictive
Definition~\ref{Dkappa} is sufficient in order to show our three main
results: Theorems~\ref{Tmasterghh}, \ref{Tminors},
and~\ref{Tinverse-general}.

\begin{proof}[Proof of Proposition~\ref{Pkappa}]
The $I=J'$ case follows by Theorem~\ref{Tmasterghh}. Now suppose $I \neq
J'$; as in the proof of Theorem~\ref{Tminors}, we may assume without loss
of generality that $I, J'$ are disjoint. The first step is to
generalize Lemma~\ref{Lminors}:

\begin{lemma}
With notation as above, and for all $x$ commuting with $R$,
\begin{equation}\label{Ekappa3}
M(x) := \begin{pmatrix} {\bf m}(V \setminus I, v_0) & (\dg)_{I|J'} + xJ
\\ 0 & {\bf e}(V \setminus J')^T \end{pmatrix} \quad \implies \quad
\det(M(x)) = (-1)^{|V \setminus I|-1} \kappa((\dg)_{I|J'}, v_0).
\end{equation}
\end{lemma}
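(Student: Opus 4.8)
The plan is to mimic, verbatim in structure, the argument proving Lemma~\ref{Lminors}, being careful only that rows and columns now range over different index sets ($V\setminus I$ for rows, $V\setminus J'$ for columns) since $I,J'$ are disjoint. First I would label the rows of $M(x)$ by $(V\setminus I;\ \infty)$ and the columns by $(\infty;\ V\setminus J')$. As in the proof of Lemma~\ref{Lminors}, the top $1\times 2$ block-submatrix is $\begin{pmatrix}{\bf m}(V\setminus I,v_0) & (\dg)_{I|J'}+xJ\end{pmatrix}$ and the bottom block-row is $\begin{pmatrix}0 & {\bf e}(V\setminus J')^T\end{pmatrix}$; pre-multiplying the final row by $x\,{\bf e}(V\setminus I)$ and subtracting from the top block-row clears the $xJ$ term, giving $\det(M(x))=\det(M(0))$, so we may set $x=0$.

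Next, working with $M(0)$, I would subtract the $v_0$th column from every column indexed by a node in $V\setminus(J'\cup\{v_0\})$. Since the $(v_0,v_0)$-entry of $\dg$ is $0$ and $v_0\notin I\cup J'$, the $v_0$th row of $M(0)$ is unchanged by this operation — exactly as in Lemma~\ref{Lminors}. Call the result $M'$; then for each $w\in V\setminus(I\cup\{v_0\})$ I subtract $m(w,v_0)$ times the $v_0$th row of $M'$ from the $w$th row. After these two rounds, the first column (indexed by $\infty$) and the last row (indexed by $\infty$) have become standard basis vectors: the last row is ${\bf e}_{v_0}$-type because ${\bf e}(V\setminus J')^T$ was $1$ in the $v_0$ slot and the column operations only altered other columns using that slot, and the first column is ${\bf e}_{v_0}$-type because ${\bf m}(V\setminus I,v_0)$ has a $1$ in the $v_0$ position and the row operations subtracted that row's contribution from the others. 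Expanding the determinant along the first column and then the last row produces a sign $(-1)^{|V\setminus I|-1}$ (there are $|V\setminus I|$ rows other than the $\infty$ row, and the $v_0$th one moves to a corner) times the determinant of the remaining block, which is precisely the matrix $D_1-{\bf u}_1\,{\bf e}((J'\cup\{v_0\})^c)^T-{\bf m}((I\cup\{v_0\})^c,v_0){\bf w}_1^T$ appearing in~\eqref{Ekappa2}. Hence $\det(M(x))=(-1)^{|V\setminus I|-1}\kappa((\dg)_{I|J'},v_0)$.

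The only genuine subtlety — and the step I expect to need the most care — is bookkeeping of which column operations touch which row, because now the sets $V\setminus I$ and $V\setminus J'$ are not equal, so the "diagonal" entry $m(w,v_0)$ that one subtracts is indexed by a row-node $w\in V\setminus I$ while the column being cleared is indexed by a column-node in $V\setminus J'$; one must check that the correspondence between the bordering vectors ${\bf m}(V\setminus I,v_0)$, ${\bf e}(V\setminus J')$ and the entries of $(\dg)_{I|J'}$ is exactly the one that makes the row operation legal (i.e. that the relevant entries are literally $m(w,v_0)$ and $d(w,v_0)$, with the latter vanishing at $w=v_0$). This is precisely guaranteed by the definition of the multiplicative and additive-multiplicative distance entries, so it goes through, but it is where a careless index shift would break the argument. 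Once this lemma is in hand, the proof of Proposition~\ref{Pkappa} for $I\neq J'$ will follow by the same row-operation-and-proportional-rows device as in Theorem~\ref{Tminors}: locating a pendant node $w\in J'\setminus I$ adjacent to a cut-vertex $v_0\in V\setminus(I\cup J')$, performing the reduction ${\bf d}^T_w\mapsto{\bf d}^T_w-m_{wv_0}{\bf d}^T_{v_0}$ inside $M(x)$, and observing the resulting row is a scalar multiple of the ${\bf e}$-row at the bottom (or, when $|I\Delta J'|>2$, that two such reductions yield two proportional rows), forcing $\det(M(x))=0$ and hence $\kappa((\dg)_{I|J'},v_0)=0$.
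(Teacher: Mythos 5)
Your proposal is correct and is exactly the argument the paper intends: the paper states that the proof is the same as that of Lemma~\ref{Lminors} and omits it, and your row/column operations (clearing $xJ$ via the bottom row, subtracting the $v_0$th column from the columns in $(J'\cup\{v_0\})^c$, then subtracting $m(w,v_0)$ times the $v_0$th row from the rows in $(I\cup\{v_0\})^c$) reproduce that argument with the correct bookkeeping for the unequal row/column index sets. The subtlety you flag -- that $v_0\in V\setminus(I\cup J')$ guarantees both the $v_0$ row and $v_0$ column survive, with $d(v_0,v_0)=0$ and $m(v_0,v_0)=1$ making the operations legal -- is indeed the only point requiring care, and you handle it correctly.
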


\noindent The proof is the same as that of Lemma~\ref{Lminors}, and is
hence omitted.

We now show the result for $I \neq J'$. By~\eqref{Ekappa3}, it suffices
to show that $M(0)$ is singular. As in the proof of
Theorem~\ref{Tminors}, choose $w \in J' \setminus I$ and a cut-vertex $v
\in V \setminus (I \cup J')$ which is adjacent to $w$. Now the columns
corresponding to the nodes of $G_{v \to w} \setminus \{ v \}$ are removed
from $(\dg)_{I|J'}$. Label every row of $M(0)$ -- except the last row --
by the corresponding node in $V \setminus I$, and subtract $m_{w,v}$
times the $v$th row from the $w$th row. Then the new $w$th row is $d(w,v)
\cdot (0, {\bf e}(V \setminus J')^T)$. As this is proportional to the
final row, the proof is complete.
\end{proof}

\section{Theorem~\ref{Tinverse-general}: The inverse matrix for
additive-multiplicative graphs}\label{S4}

In this section, we provide a closed-form expression for the inverse of
the distance matrix $\dg$ for an arbitrary graph $G$, in terms of the
additive-multiplicative block-datum $\mathcal{G}$ and the geometry
of~$G$. In particular, our result subsumes~\cite[Theorem B]{CK-tree} for
``additive-multiplicative trees'', which in turn implies all known
formulas for trees -- including by Graham--Lov\'asz, Bapat and his
coauthors, and Zhou--Ding (see the discussion and references around
\textit{loc.~cit.}).
More precisely, that result for trees involved explicit formulas, which
we now explain more conceptually (for general graphs).

A striking feature of our formula is that the matrix $\dg^{-1}$ is a
rank-one update not of the graph Laplacian $L_\mathcal{G}$ (as in
previous works~\cite{BKN,GL}) but of a different matrix $C_\mathcal{G}
(D^*_G)^{-1}$, where $C_\mathcal{G}$ was defined for trees
in~\cite{CK-tree}. That said, our formula specializes to all previous
versions in the literature.

\begin{definition}\label{Dinverse}
Given the block-datum $\mathcal{G} = \{ \mathcal{G}_e = ( a_e, D^*_{G_e}
) : e \in E \}$ for a graph $G$, with strong blocks $G_e$ and
multiplicative distance matrix $D^*_G$ that is invertible, define
\begin{equation}
\tauin^T := {\bf e}^T (D^*_G)^{-1}, \qquad
\tauout := (D^*_G)^{-1} {\bf e}.
\end{equation}
Given a node $i$ lying in a strong block $e \in E$, define $G_{i \to e}$
to be the unique maximum induced subgraph of $G$ that contains $e$ and
for which $i$ is not a cut-vertex. Now define for $i \in V(G)$:
\begin{equation}
\beta_i := \frac{\kappa(\dg)}{\det(\dg)} \sum_{e \in E : i \in e}
\frac{1}{a_e} \sum_{f \in E : f \subset G_{i \to e}}
\frac{\det(\dgf)}{\kappa(\dgf)},
\end{equation}
and the matrix $C_\mathcal{G} \in R^{V(G) \times V(G)}$ via:
\begin{equation}\label{ECmatrix}
(C_\mathcal{G})_{ij} := \begin{cases} \beta_i, & \text{if } j=i,\\
\beta_i - \frac{1}{a_e}, \qquad  & \text{if } j \neq i, \ j \in G_{i \to
e}.
\end{cases}
\end{equation}
\end{definition}

\begin{remark}\label{RCmatrix}
For every node $i \in V(G)$, the subgraphs $G_{i
\to e}$ partition the edges and the nodes:
\[
E(G) = \bigsqcup_{e \in E : i \in e}  E(G_{i \to e}), \qquad
V(G) = \{ i \} \sqcup \bigsqcup_{e \in E : i \in e} (V(G_{i \to e})
\setminus \{ i \}).
\]
Also note that if $i$ is not a cut-vertex, then $G_{i \to e} = G$, and
hence $\beta_i = \frac{1}{a_e}$ where $e \in E$ is the unique strong
block containing $i$. In particular, $C_\mathcal{G}$ has $i$th row
$\frac{1}{a_e} {\bf e}_i^T$.
\end{remark}

When $G$ is a tree with edge-datum $\mathcal{T}$,
Definition~\ref{Dinverse} specializes to the corresponding one
in~\cite{CK-tree} -- see~\eqref{Einverse-tree} below. That said, the
following formula for $\dg^{-1}$ is simpler than that for $\dt^{-1}$
in~\cite{CK-tree}:

\begin{utheorem}\label{Tinverse-general}
With the above notation -- in particular assuming that $\dg, D^*_G$ are
invertible,
\begin{equation}\label{Einverse}
\dg^{-1} = \frac{\kappa(\dg)}{\det(\dg)} \tauout \tauin^T + C_\mathcal{G}
(D^*_G)^{-1}.
\end{equation}
\end{utheorem}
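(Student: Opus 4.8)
The plan is to verify~\eqref{Einverse} by directly computing the product of $\dg$ with the claimed right-hand side and checking it equals the identity, and then invoking Zariski density (Remark~\ref{Rzariski-short}) to drop the invertibility hypotheses where possible. First I would set $\Delta := \kappa(\dg)/\det(\dg)$ and write the claimed inverse as $X := \Delta\,\tauout\tauin^T + C_\mathcal{G}(D^*_G)^{-1}$, so that $\dg X = \Delta\,(\dg\tauout)\tauin^T + \dg C_\mathcal{G}(D^*_G)^{-1}$. The second summand is the one to understand: I would first establish the key structural identity
\begin{equation}\label{Epropprop}
\dg\, C_\mathcal{G} = D^*_G - {\bf v}\,{\bf e}^T
\end{equation}
for a suitable vector ${\bf v} \in R^{V(G)}$ (which will turn out to be related to $\tauout$ up to scalar), using the recursive/block structure of $\dg$ across cut-vertices together with the definition~\eqref{ECmatrix} of $C_\mathcal{G}$ and the partition in Remark~\ref{RCmatrix}. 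Granting~\eqref{Epropprop}, one gets $\dg C_\mathcal{G}(D^*_G)^{-1} = \Id - {\bf v}\,\tauin^T$, so $\dg X = \Id + \big(\Delta\,\dg\tauout - {\bf v}\big)\tauin^T$, and it remains to show $\Delta\,\dg\tauout = {\bf v}$, i.e.\ that the leftover rank-one term cancels.

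To prove the structural identity~\eqref{Epropprop}, I would argue entry by entry, or better, row by row. Fix a node $i$ and compute the $i$th row of $\dg C_\mathcal{G}$: its $j$th entry is $\sum_k d_{ik}(C_\mathcal{G})_{kj}$. Using $(C_\mathcal{G})_{kj} = \beta_k - \tfrac{1}{a_{e(k,j)}}$ for $j\neq k$ (where $e(k,j)$ is the block with $j\in G_{k\to e(k,j)}$) and $(C_\mathcal{G})_{kk}=\beta_k$, one splits the sum as $\sum_k d_{ik}\beta_k - \sum_{k\neq j} d_{ik}/a_{e(k,j)}$. The first piece is independent of $j$, so it contributes to the ${\bf v}\,{\bf e}^T$ term; the second piece must be shown to equal $-m_{ij}$ plus a $j$-independent correction. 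Here I expect to use the additive recursion $d_{ik} = d_{iv} + m_{iv}d_{vk}$ across cut-vertices and the multiplicative recursion $m_{ij}=m_{iv}m_{vj}$, telescoping along the unique ``block path'' from $i$ to $j$; the quantity $\sum_{f\subset G_{i\to e}}\det(\dgf)/\kappa(\dgf)$ appearing in $\beta_i$ is exactly what makes the telescoping close up, by~\eqref{Emghh3}. The base case is a single strong block $G_e$, where $\dge = a_e(D^*_{G_e}-J)$ and the claim reduces to checking $a_e(D^*_{G_e}-J)\cdot \tfrac{1}{a_e}(\text{something}) = D^*_{G_e} - {\bf v}{\bf e}^T$ on one block, which is a short linear-algebra computation; the inductive step peels off a pendant block and uses the block-triangular form of $\dg$ already written down in the proof of Theorem~\ref{Tmasterghh}.

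For the final rank-one cancellation $\Delta\,\dg\tauout = {\bf v}$, I would identify ${\bf v}$ explicitly from the row computation above (it will be $\big(\sum_k d_{ik}\beta_k\big)_i$, up to the $j$-independent correction absorbed into it) and compare with $\dg(D^*_G)^{-1}{\bf e}$; alternatively, since both ${\bf v}$ and $\Delta\,\dg\tauout$ are honest vectors, one can pin them down by testing against ${\bf e}^T$ on the left and using $\cof(\dg) = {\bf e}^T\adj(\dg){\bf e}$ together with the master GHH identity~\eqref{Emghh4} — this is where $\kappa$ and $\cof$ re-enter. I expect the main obstacle to be precisely the bookkeeping in~\eqref{Epropprop}: correctly tracking which block $G_{k\to e}$ the index $j$ falls into as $k$ ranges over all nodes, and showing the telescoping sum of $\det(\dgf)/\kappa(\dgf)$ along nested subgraphs collapses to give exactly $-m_{ij}$ and no more. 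Once~\eqref{Epropprop} is in hand, everything else is formal. Finally, since all claimed identities between the two sides are polynomial in the $a_e$ and the off-diagonal entries of the $D^*_{G_e}$ once cleared of denominators, and $\det(\dg), \det(D^*_G), \kappa(\dg)$ are not identically zero, Zariski density extends the result from $\mathbb{Q}(\{a_e, m_{ij}\})$ to $\Z[\{a_e,m_{ij}\}]$ and then to an arbitrary commutative unital ring $R$ — with the stated invertibility of $\dg, D^*_G$ being exactly the locus on which the formula is asserted.
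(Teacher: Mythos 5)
Your route is genuinely different from the paper's. The paper proves \eqref{Einverse} by induction on the number of strong blocks: it peels off a pendant block $f$ at a cut-vertex $v_0$, writes $\dgbar$ in $2\times 2$ block form, inverts via the Schur complement formula \eqref{Eblock-inverse}, and matches the four blocks of the two sides one at a time (Steps 3--6), using \eqref{Emghh3}, the identity \eqref{Eidentity}, and the decomposition \eqref{Ecareful} of $\overline{C}_{11},\overline{C}_{12}$. You instead propose a direct verification $\dg X=\Id$, pivoting on the structural identity $\dg\,C_\mathcal{G}=D^*_G-{\bf v}\,{\bf e}^T$. That identity is in fact \emph{equivalent} to the theorem once ${\bf v}$ is identified: multiplying \eqref{Einverse} by $\dg$ on the left and $D^*_G$ on the right, and using $\tauin^T D^*_G={\bf e}^T$, gives exactly $\dg\,C_\mathcal{G}+\frac{\kappa(\dg)}{\det(\dg)}(\dg\tauout)\,{\bf e}^T=D^*_G$. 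So your plan is logically sound, and it is arguably more conceptual: it isolates what $C_\mathcal{G}$ ``does'' to $\dg$ in one clean statement, whereas the paper's proof distributes this information across four block computations. It is also close in spirit to Proposition~\ref{Ptreelike}, which the paper only derives \emph{after} the theorem. A small check (e.g.\ a path on two edges) confirms the row-by-row telescoping you describe: $\sum_{k\neq j}d_{ik}/a_{e(k,j)}=-m_{ij}+c_i$ with $c_i$ independent of $j$, driven by $d_{ik}=d_{iv}+m_{iv}d_{vk}$ and $m_{ij}=m_{iv}m_{vj}$.

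That said, as written the proposal defers the two computations that carry essentially all the content, and one of your fallback arguments does not work. First, the telescoping lemma is only asserted; for a general graph one must track, for each pair $(k,j)$, which subgraph $G_{k\to e}$ contains $j$, and the bookkeeping is comparable in length to the paper's Steps 3--6 -- so this is a real gap, not a routine omission. Note also that your remark that $\sum_{f\subset G_{i\to e}}\det(\dgf)/\kappa(\dgf)$ ``makes the telescoping close up via \eqref{Emghh3}'' is misplaced: all the $\beta_k$'s land in the $j$-independent term $\sum_k d_{ik}\beta_k$ and play no role in the telescoping; \eqref{Emghh3} enters only in the second step, when identifying ${\bf v}$. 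Second, for that identification ${\bf v}=\frac{\kappa(\dg)}{\det(\dg)}\dg\tauout$, your alternative of ``testing against ${\bf e}^T$ on the left'' yields only the single scalar equation ${\bf e}^T{\bf v}=\frac{\kappa(\dg)}{\det(\dg)}{\bf e}^T\dg\tauout$ and cannot pin down a vector; you must carry out the componentwise comparison, which again requires \eqref{Emghh3} applied to the nested family of subgraphs $G_{i\to e}$ and is another nontrivial computation. The Zariski-density framing at the end is fine and matches the paper's conventions. In summary: a viable and genuinely different strategy, but the two key lemmas need to be proved in full before this constitutes a proof.
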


Theorem~\ref{Tinverse-general} applies to all additive-multiplicative
matrices, of all graphs. As we show in Section~\ref{Shypertrees}, the
formula~\eqref{Einverse} indeed specializes to the
formula~\eqref{Einverse-tree} for $\dt^{-1}$ for trees, shown in
\cite[Theorem~B]{CK-tree}. This is in spite of the absence of a Laplacian
matrix as in the literature for trees~\cite{BKN,GL}. On a related note, in
Section~\ref{Shypertrees} we define such a Laplacian $L_\mathcal{G}$ for
general graphs, and it is closely related to the matrix $C_\mathcal{G}
(D^*_G)^{-1}$, which by Theorem~\ref{Tinverse-general} is the ``correct''
matrix to use for $\dg^{-1}$.

\begin{remark}
Akin to the invariants $\kappa(\dg), \det(\dg)$, $\cof(\dg)$, the inverse
$\dg^{-1}$ can also be recovered from the block-datum $\mathcal{G} = \{
(a_e, D^*_{G_e}) : e \in E \}$. This is because by \cite[Theorem
C]{CK-tree},
\begin{equation}\label{Eghh-inverse}
(D^*_G)^{-1} = \sum_{e \in E} [ (D^*_{G_e})^{-1} ]_{V(G_e)} + \Id_{V(G)}
- \sum_{e \in E} [ \Id_{G_e} ]_{V(G_e)},
\end{equation}
where given $S \subset V(G)$ and a matrix $A \in R^{|S| \times
|S|}$, we define $[A]_S$ to be the $|V(G)| \times |V(G)|$ matrix with
principal submatrix $A$ over the rows and columns indexed by $S$, and all
other entries zero.
\end{remark}

\begin{remark}
In the sequel, we will assume that $\dg$ and $\kappa(\dg)$ are
invertible, and hence so are all $a_e$ (if $|V(G_e)| > 1$) and
$D^*_{G_e}$ -- by Zariski density. For more on this, we refer the reader
to Remark~\ref{Rzariski-short} and to similar applications of Zariski
density in the special case in~\cite{CK-tree}.
\end{remark}

\begin{remark}\label{Rae-constant}
Suppose $a_e \equiv a\ \forall e \in E$, for some fixed scalar $a$. By
Remark~\ref{Rspecial}(2) or~\cite{CK-tree}, in this case $\dg = a(D^*_G -
J)$. Moreover, $C_\mathcal{G} = \frac{1}{a} {\rm Id}_{V(G)}$ by
Theorem~\ref{Tmasterghh}. This yields:
\begin{align*}
\frac{\kappa(\dg)}{\det(\dg)} \tauout \tauin^T + C_\mathcal{G}
(D^*_G)^{-1} = &\ a^{-1} \left( \frac{\det(D^*_G) -
\cof(D^*_G)}{\det(D^*_G)} \right)^{-1} \cdot (D^*_G)^{-1} {\bf e} \cdot
{\bf e}^T (D^*_G)^{-1} + a^{-1} (D^*_G)^{-1}\\
= &\ a^{-1} \left[ (D^*_G)^{-1} + \frac{(D^*_G)^{-1} {\bf e} \cdot {\bf e}^T
(D^*_G)^{-1}}{1 - {\bf e}^T (D^*_G)^{-1} {\bf e}} \right],
\end{align*}
which by the Sherman--Morrison formula equals $a^{-1} (D^*_G - {\bf e}
{\bf e}^T)^{-1} = \dg^{-1}$. This applies e.g.~to all $q$-distance
matrices $D_q(G)$ where $a_e = 1/(q-1)$, and to blocks (graphs with no
cut-vertices).
\end{remark}

We now turn to the proof of Theorem~\ref{Tinverse-general}, and begin by
isolating some preliminary identities that will be used below.

\begin{prop}\label{Pprelim}
Notation as above. Then:
\begin{align}
\tauin^T {\bf e} = {\bf e}^T \tauout = &\ \frac{\cof(D^*_G)}{\det(D^*_G)}
= \frac{\cof(\dg)}{\kappa(\dg)}; \label{Eprojesh22}\\
{\bf e}^T C_\mathcal{G} = &\ \frac{\kappa(\dg) - \cof(\dg)}{\det(\dg)}
\cdot {\bf e}^T. \label{Eprojesh23}
\end{align}
\end{prop}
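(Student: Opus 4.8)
The plan is to prove the two identities \eqref{Eprojesh22} and \eqref{Eprojesh23} separately, leaning heavily on the master GHH identities from Theorem~\ref{Tmasterghh} and on Lemma~\ref{Ldetcof}.

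For \eqref{Eprojesh22}: first note $\tauin^T {\bf e} = {\bf e}^T (D^*_G)^{-1} {\bf e} = {\bf e}^T \tauout$, so the two left-hand expressions are trivially equal. To identify this scalar, I would use the standard fact that for an invertible matrix $A$ one has ${\bf e}^T A^{-1} {\bf e} = \cof(A)/\det(A)$ (this is $\cof(A) = {\bf e}^T \adj(A) {\bf e}$ together with $\adj(A) = \det(A) A^{-1}$). Applying this with $A = D^*_G$ gives ${\bf e}^T (D^*_G)^{-1} {\bf e} = \cof(D^*_G)/\det(D^*_G)$. Finally, the identification with $\cof(\dg)/\kappa(\dg)$ is precisely the second formula in \eqref{Emghh5} of Theorem~\ref{Tmasterghh}. (Here one works over the rational function field so that $D^*_G$ is invertible, proves the identity there, and extends to arbitrary $R$ by the Zariski density argument of Remark~\ref{Rzariski-short}.)

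For \eqref{Eprojesh23}: the key observation is that ${\bf e}^T C_\mathcal{G}$ is a constant multiple of ${\bf e}^T$, because each \emph{column} of $C_\mathcal{G}$ has a predictable column-sum. Fix a node $j$; the $(i,j)$ entry of $C_\mathcal{G}$ is $\beta_i$ if $i=j$ and $\beta_i - 1/a_e$ where $e$ is the block with $j \in G_{i \to e}$, i.e.\ where $e$ is the block of $i$ lying ``toward'' $j$. Summing over all $i$, the $\beta_i$ contribute $\sum_i \beta_i$, and the correction terms $-1/a_e$ are summed over $i \neq j$. The cleanest route is to instead compute the column sum directly: for fixed $j$, as $i$ ranges over $V(G)$, one should reorganize the sum $\sum_i (C_\mathcal{G})_{ij}$ using the partition of $V(G)$ into $\{j\}$ and the pieces $V(G_{j\to e'})\setminus\{j\}$ over blocks $e'$ containing $j$, and substitute the definition $\beta_i = \frac{\kappa(\dg)}{\det(\dg)}\sum_{e\ni i}\frac{1}{a_e}\sum_{f\subset G_{i\to e}}\frac{\det(\dgf)}{\kappa(\dgf)}$. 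After interchanging the orders of summation (summing over blocks $f$ on the outside), each block $f$ should contribute a net coefficient of $\det(\dgf)/\kappa(\dgf)$ times $\kappa(\dg)/\det(\dg)$, with the $-1/a_e$ terms exactly accounting for the difference between $G_{i\to e}$ for varying $i$ versus the full graph; by \eqref{Emghh3} the sum $\sum_f \det(\dgf)/\kappa(\dgf)$ collapses to $\det(\dg)/\kappa(\dg)$, and one is left with $(\kappa(\dg)-\cof(\dg))/\det(\dg)$ after also invoking \eqref{Emghh4} to handle the $\cof$ term. Since this column sum is independent of $j$, we conclude ${\bf e}^T C_\mathcal{G} = \frac{\kappa(\dg)-\cof(\dg)}{\det(\dg)}{\bf e}^T$.

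I expect the main obstacle to be the combinatorial bookkeeping in \eqref{Eprojesh23}: carefully tracking, for a fixed column index $j$ and summation index $i$, which block $e = e(i,j)$ satisfies $j \in G_{i\to e}$, and then correctly interchanging the triple sum over $i$, over blocks $e \ni i$, and over blocks $f \subset G_{i\to e}$ so that the telescoping/cancellation of the $1/a_e$ terms against the GHH identity \eqref{Emghh3} becomes transparent. A useful sanity check along the way is Remark~\ref{RCmatrix}: when $j$ is not a cut-vertex, $C_\mathcal{G}$ has a very simple $j$th row, and when $G$ is a single block everything reduces to Remark~\ref{Rae-constant}. Once the column-sum computation is organized correctly, the remaining steps are routine applications of Lemma~\ref{Ldetcof} and Theorem~\ref{Tmasterghh}, and the Zariski density passage to an arbitrary ring $R$ is as in Remark~\ref{Rzariski-short}.
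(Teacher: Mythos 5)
Your proposal follows essentially the same route as the paper: for \eqref{Eprojesh22} it is exactly the paper's argument (${\bf e}^T (D^*_G)^{-1} {\bf e} = \cof(D^*_G)/\det(D^*_G)$ via the adjugate, then the second identity in \eqref{Emghh5}), and for \eqref{Eprojesh23} the paper likewise fixes a column $j$, splits the column sum into $\sum_i \beta_i$ minus the $1/a_e$ corrections, converts both sums to sums over blocks, and finishes with the master GHH identities. One caution about your sketch of the final collapse: after the $1/a_e$ corrections cancel against part of $\sum_i \beta_i$, the surviving per-block coefficient is $\frac{\kappa(\dg)}{\det(\dg)} \cdot \frac{1}{a_e} \cdot \frac{\det(\dge)}{\kappa(\dge)}$ --- the factor $\frac{1}{a_e}$ cannot be dropped, and applying \eqref{Emghh3} to $\sum_f \det(\dgf)/\kappa(\dgf)$ as you describe would just give $1$; instead one first rewrites $\frac{1}{a_e}\frac{\det(\dge)}{\kappa(\dge)} = 1 - \frac{\cof(\dge)}{\kappa(\dge)}$ via \eqref{Etemp} and then applies \eqref{Emghh4} to obtain $\frac{\kappa(\dg)}{\det(\dg)}\left(1 - \frac{\cof(\dg)}{\kappa(\dg)}\right)$.
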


\begin{proof}
The identity~\eqref{Eprojesh22} is immediate from the definitions and
Theorem~\ref{Tmasterghh}. We now show the equality in~\eqref{Eprojesh23}
of the $j$th coordinate on both sides, for each fixed $j \in V(G)$. The
left-side equals
\[
\sum_{i \in V(G)} \beta_i - \sum_{i \in V(G)} \frac{1}{a_{e_{i \to j}} },
\]
where $e_{i \to j}$ denotes the unique strong block $e \in E$ containing
$i$ such that $j \in G_{i \to e}$. We now convert the second sum, which
is over vertices, to one over blocks. It is not hard to see that every
block $e$ contains a unique node $i_0$ such that $e \neq e_{i_0 \to j}$;
hence the latter sum equals
\[
\sum_{e \in E} \frac{|V(G_e)| - 1}{a_e}.
\]

Next, we consider the former sum, again converting it into a sum over
blocks:
\[
\sum_{i \in V(G)} \beta_i = \frac{\kappa(\dg)}{\det(\dg)} \sum_{e \in E}
\frac{1}{a_e} \sum_{i \in V(G_e)} \sum_{f \in E : f \subset G_{i \to e}}
\frac{\det(\dgf)}{\kappa(\dgf)}
\]
For each fixed block $e \in E$, the inner double sum on the right is an
integer-linear combination of the ratios $\det(\dgf) / \kappa(\dgf)$ over
$f \in E$. Count the coefficient of this term, say $n_{e,f} \in
\Z$ for fixed $e$ and arbitrary $f \in E$. If $f=e$, then the ratio
$\det(\dge) / \kappa(\dge)$ occurs for every summand $i \in V(G_e)$ in
the inner double sum above, so $n_{e,e} = |V(G_e)|$. If instead $f \neq
e$, then the ratio $\det(\dgf) / \kappa(\dgf)$ occurs for every summand
$i \in V(G_e)$, except for the unique cut-vertex $i_0 \in V(G_e)$
separating $f$ from $G_{i_0 \to e}$ (which includes $e$). Hence, $n_{e,f}
= |V(G_e)| - 1$ if $f \neq e$. Therefore:
\begin{align*}
\sum_{i \in V(G)} \beta_i = &\ \frac{\kappa(\dg)}{\det(\dg)} \sum_{e \in
E} \frac{1}{a_e} \left( \frac{\det(\dge)}{\kappa(\dge)} + (|V(G_e)|
- 1) \sum_{f \in E} \frac{\det(\dgf)}{\kappa(\dgf)} \right)\\
= &\ \frac{\kappa(\dg)}{\det(\dg)} \sum_{e \in E} \frac{\det(\dge)}{a_e
\kappa(\dge)} + \sum_{e \in E} \frac{|V(G_e)| - 1}{a_e},
\end{align*}
where the final equality follows from~\eqref{Emghh3}.

Putting together these computations, for each $j \in V(G)$ we have
by~\eqref{Etemp} and Theorem~\ref{Tmasterghh}:
\[
({\bf e}^T C_\mathcal{G})_j
= \frac{\kappa(\dg)}{\det(\dg)} \sum_{e \in E} \frac{\det(\dge)}{a_e
\kappa(\dge)}
= \frac{\kappa(\dg)}{\det(\dg)} \sum_{e \in E} \left( 1-
\frac{\cof(\dge)}{\kappa(\dge)} \right)
= \frac{\kappa(\dg)}{\det(\dg)} \left( 1- \frac{\cof(\dg)}{\kappa(\dg)}
\right).
\]
Since this holds for every vertex $j \in V(G)$, the proof
of~\eqref{Eprojesh23} is complete.
\end{proof}

With Proposition~\ref{Pprelim} in hand, we show our final main result.

\begin{proof}[Proof of Theorem~\ref{Tinverse-general}]
The proof is by induction on the number of strong blocks $|E|$ of $G$,
with the case $|E|=1$ worked out in Remark~\ref{Rae-constant}. For the
induction step, assume that we know the result for $G$; now add to $G$
the pendant block $f$, separated from $G$ by the cut-vertex $v_0$. We
first set notation. Let $\overline{G}$ be the new graph, and write
$\dgbar$ for the additive-multiplicative matrix for $\overline{G}$. Thus:
\begin{align}\label{Edgbar}
\begin{aligned}
\dgbar := &\ \begin{pmatrix} \dg & H\\ K & a_f (D_2^* - J)
\end{pmatrix},\\
\text{where} \quad D^*_f = &\ \begin{pmatrix} 1 & {\bf w}^T \\ {\bf u} &
D_2^* \end{pmatrix}, \qquad
H := \dg {\bf e}_{v_0} {\bf e}^T + a_f D^*_G {\bf e}_{v_0} ({\bf w} -
{\bf e})^T,\\
K := &\ a_f ({\bf u} - {\bf e}) {\bf e}^T + {\bf u} {\bf
e}_{v_0}^T \dg.
\end{aligned}
\end{align}

In the sequel, we also use the formula for the inverse of a $2 \times 2$
square block matrix:
\begin{equation}\label{Eblock-inverse}
M = \begin{pmatrix} D_1 & H\\ K & D_2 \end{pmatrix} \quad \implies \quad
M^{-1} = \begin{pmatrix} D_1^{-1} + D_1^{-1} H \Psi^{-1} K D_1^{-1} & -
D_1^{-1} H \Psi^{-1} \\ - \Psi^{-1} K D_1^{-1} & \Psi^{-1} \end{pmatrix},
\end{equation}
where the $(1,1)$-block is assumed to be invertible, and $\Psi$ denotes
the Schur complement
\[
\Psi = D_2 - K D_1^{-1} H.
\]

\noindent The following special case is of interest:
\begin{equation}
(D^*_f)^{-1} = \begin{pmatrix} 1 + {\bf w}^T X^{-1} {\bf u} & - {\bf w}^T
X^{-1} \\ -X^{-1} {\bf u} & X^{-1} \end{pmatrix}, \qquad \text{where } X
:= D_2^* - {\bf u} {\bf w}^T.
\end{equation}

\noindent From this and~\eqref{Eghh-inverse}, one has the following
formula for $(D^*_{\overline{G}})^{-1}$:
\begin{align}\label{Emghh-inverse}
\begin{aligned}
(D^*_{\overline{G}})^{-1} = &\ [ (D^*_G)^{-1} ]_{V(G)} + [ (D^*_f)^{-1}
]_{V_f} - {\bf e}_{v_0} {\bf e}_{v_0}^T\\
= &\ \begin{pmatrix} (D^*_G)^{-1} + ({\bf w}^T X^{-1} {\bf u}) {\bf
e}_{v_0} {\bf e}_{v_0}^T & {\bf e}_{v_0} (-{\bf w}^T X^{-1}) \\ (- X^{-1}
{\bf u}) {\bf e}_{v_0}^T & X^{-1} \end{pmatrix}.
\end{aligned}
\end{align}

\noindent \textbf{Step 1:}
We break up the proof into steps for ease of exposition. Let $D_f$ denote
the additive-multiplicative distance matrix of $f$:
\begin{equation}
D_f := a_f (D^*_f - J) = a_f \begin{pmatrix}
0 & ({\bf w} - {\bf e})^T \\
{\bf u} - {\bf e} & D_2^* - J
\end{pmatrix}.
\end{equation}
Then we claim:
\begin{equation}\label{Eidentity}
- ({\bf w} - {\bf e})^T X^{-1} ({\bf u} - {\bf e}) = \frac{\det(D_f)}{a_f
\kappa(D_f)}.
\end{equation}

Indeed, we compute using a row and a column operation and the above
formulas in this proof:
\begin{align*}
\det(D_f) = &\ \det a_f \begin{pmatrix} 
0 & ({\bf w} - {\bf e})^T\\
{\bf u} - {\bf e} & X
\end{pmatrix}
= a_f^{|V_f|} \det(X) \left( - ({\bf w} - {\bf e})^T X^{-1} ({\bf u} -
{\bf e}) \right)\\
= &\ a_f^{|V_f|} \det(D^*_f) \left( - ({\bf w} - {\bf e})^T X^{-1} ({\bf
u} - {\bf e}) \right),
\end{align*}
from which the claim follows. This identity will be used repeatedly in
our computations below.\medskip

\noindent \textbf{Step 2:}
We now explain our strategy. The left-hand side of~\eqref{Einverse},
namely~$(\dgbar)^{-1}$ where $\dgbar$ is given by~\eqref{Edgbar}, can be
computed using the formula~\eqref{Eblock-inverse}. On the other hand, the
right-hand side of~\eqref{Einverse} can be explicitly written out in $2
\times 2$ block form as well. We will carry out both of these steps and
show the equality of the two sides of~\eqref{Einverse}, block by block.

We begin by writing out analogues for $\dgbar$ of the vectors $\tauin,
\tauout$ for $\dg$; we denote these analogues by $\overline{\tauin},
\overline{\tauout}$ respectively:
\begin{equation}
\overline{\tauin} = \begin{pmatrix} \tauin^1 \\ \tauin^2 \end{pmatrix},
\qquad
\overline{\tauout} = \begin{pmatrix} \tauout^1 \\ \tauout^2 \end{pmatrix},
\end{equation}
where we have by~\eqref{Emghh-inverse}:
\begin{align}\label{Etauinout12}.
\begin{aligned}
(\tauin^1)^T = &\ \tauin^T + ({\bf w} - {\bf e})^T X^{-1} {\bf u} \cdot
{\bf e}_{v_0}^T, \qquad 
(\tauin^2)^T = - ({\bf w} - {\bf e})^T X^{-1},\\
\tauout^1 = &\ \tauout + {\bf w}^T X^{-1} ({\bf u} - {\bf e}) \cdot
{\bf e}_{v_0}, \qquad
\tauout^2 = - X^{-1} ({\bf u} - {\bf e}).
\end{aligned}
\end{align}

Also note that $C_{\overline{\mathcal{G} }}$ is block upper-triangular,
with $(2,1)$-block zero, and $(2,2)$-block $a_f^{-1} \Id$, since $V_f
\setminus \{ v_0 \}$ contains no cut-vertices. Writing
$C_{\overline{\mathcal{G}} } = \begin{pmatrix} \overline{C}_{11} &
\overline{C}_{12} \\ 0 & a_f^{-1} \Id \end{pmatrix}$,
the theorem reduces by~\eqref{Eblock-inverse} to showing:
\begin{align}\label{Einverse-block}
\begin{aligned}
&\ \begin{pmatrix} \dg^{-1} + \dg^{-1} H \Psi^{-1} K
\dg^{-1} & - \dg^{-1} H \Psi^{-1} \\ - \Psi^{-1} K \dg^{-1} & \Psi^{-1}
\end{pmatrix}\\
= &\ \frac{\kappa(\dgbar)}{\det(\dgbar)} \begin{pmatrix}
\tauout^1 (\tauin^1)^T &
\tauout^1 (\tauin^2)^T \\
\tauout^2 (\tauin^1)^T &
\tauout^2 (\tauin^2)^T
\end{pmatrix}\\
+ &\ \begin{pmatrix} \overline{C}_{11} & \overline{C}_{12} \\ 0 &
a_f^{-1} \Id \end{pmatrix}
\begin{pmatrix} (D^*_G)^{-1} + ({\bf w}^T X^{-1} {\bf u}) {\bf e}_{v_0}
{\bf e}_{v_0}^T & {\bf e}_{v_0} (-{\bf w}^T X^{-1}) \\ (- X^{-1} {\bf u})
{\bf e}_{v_0}^T & X^{-1} \end{pmatrix}.
\end{aligned}
\end{align}

Finally, we state a useful consequence of Proposition~\ref{Pprelim}
and~\eqref{Einverse} (via the induction hypothesis):
\begin{equation}\label{Eidentity2}
{\bf e}^T \dg^{-1} = \frac{\kappa(\dg)}{\det(\dg)} \tauin^T.
\end{equation}

\noindent \textbf{Step 3:}
We now compute $\Psi$ using~\eqref{Eidentity2}, and equate $\Psi^{-1}$ to
the $(2,2)$-block on the right in~\eqref{Einverse-block}:
\begin{align*}
\Psi = &\ a_f (D_2^* - J) - K \dg^{-1} H\\
= &\ a_f (D_2^* - J) - 
\left( a_f ({\bf u} - {\bf e}) {\bf e}^T + {\bf u} {\bf e}_{v_0}^T \dg
\right) \dg^{-1} \left( \dg {\bf e}_{v_0} {\bf e}^T + a_f D^*_G {\bf
e}_{v_0} ({\bf w} - {\bf e})^T \right)\\
= &\ a_f (D_2^* - J) - \left( a_f ({\bf u} - {\bf e}) {\bf e}^T + a_f
{\bf u} ({\bf w} - {\bf e})^T + a_f^2 \frac{\kappa(\dg)}{\det(\dg)} ({\bf
u} - {\bf e}) ({\bf w} - {\bf e})^T \right)\\
= &\ a_f \left( X - a_f \frac{\kappa(\dg)}{\det(\dg)} ({\bf u} - {\bf e})
({\bf w} - {\bf e})^T \right).
\end{align*}
By the Sherman--Morrison formula and~\eqref{Eidentity},
\begin{align}\label{Epsiinverse}
\begin{aligned}
\Psi^{-1} = &\ a_f^{-1} \left[ X^{-1} + \frac{\displaystyle a_f
\frac{\kappa(\dg)}{\det(\dg)} X^{-1} ({\bf u} - {\bf e}) ({\bf w} - {\bf
e})^T X^{-1}}{\displaystyle 1 - a_f \frac{\kappa(\dg)}{\det(\dg)} ({\bf
w} - {\bf e})^T X^{-1} ({\bf u} - {\bf e})} \right]\\
= &\ a_f^{-1} X^{-1} + a_f^{-1} \frac{X^{-1} ({\bf u} - {\bf e}) ({\bf w}
- {\bf e})^T X^{-1}}{\displaystyle a_f^{-1} \frac{\det(\dg)}{\kappa(\dg)}
+ a_f^{-1} \frac{\det(D_f)}{\kappa(D_f)}}\\
= &\ a_f^{-1} X^{-1} + \frac{\tauout^2
(\tauin^2)^T}{\det(\dgbar)/\kappa(\dgbar)},
\end{aligned}
\end{align}
where the final equality uses~\eqref{Etauinout12} and the Master
GHH-formula~\eqref{Emghh3}. Note, this is the computation of the
$(2,2)$-block in the left-hand side of~\eqref{Einverse-block}. But it
also clearly equals the $(2,2)$-block in that right-hand side, as
desired.\medskip

\noindent \textbf{Step 4:}
We next check the equality of the $(2,1)$-blocks
in~\eqref{Einverse-block}. Using~\eqref{Eblock-inverse},
\eqref{Eidentity}, and~\eqref{Eidentity2}:
\begin{align*}
&\ -\Psi^{-1} K \dg^{-1}\\
= &\ - \Psi^{-1} \left( a_f \frac{\kappa(\dg)}{\det(\dg)} ({\bf u} - {\bf
e}) \tauin^T + {\bf u} {\bf e}_{v_0}^T \right)\\
= &\ \frac{-\kappa(\dg)}{\det(\dg)} \left[ -\tauout^2 + a_f
\frac{\kappa(\dgbar)}{\det(\dgbar)} \tauout^2 \cdot \frac{\det(D_f)}{a_f
\kappa(D_f)} \right] \tauin^T +
\frac{\kappa(\dgbar)}{\det(\dgbar)} \cdot ({\bf w} - {\bf e})^T X^{-1}
{\bf u} \cdot \tauout^2 {\bf e}_{v_0}^T\\
&\ - a_f^{-1} X^{-1} {\bf u} {\bf e}_{v_0}^T\\
= &\ \frac{\kappa(\dgbar)}{\det(\dgbar)} \left[ \tauout^2 \tauin^T +
({\bf w} - {\bf e})^T X^{-1} {\bf u} \cdot \tauout^2 {\bf e}_{v_0}^T
\right] - a_f^{-1} X^{-1} {\bf u} {\bf e}_{v_0}^T\\
= &\ \frac{\kappa(\dgbar)}{\det(\dgbar)} \tauout^2 (\tauin^1)^T -
a_f^{-1} X^{-1} {\bf u} {\bf e}_{v_0}^T,
\end{align*}
where the penultimate equality uses the Master
GHH-formula~\eqref{Emghh3}, and the final equality
uses~\eqref{Etauinout12}. But this is, once again, easily seen to equal
the $(2,1)$-block of the right-hand side
of~\eqref{Einverse-block}.\medskip

\noindent \textbf{Step 5:}
We now examine the $(1,2)$-blocks in~\eqref{Einverse-block}. Using the
induction hypothesis for~\eqref{Einverse}, the left-hand side yields:
\[
-\dg^{-1} H \Psi^{-1}
= - \left[ {\bf e}_{v_0} {\bf e}^T + a_f \left(
\frac{\kappa(\dg)}{\det(\dg)} \tauout + C_{\mathcal{G}} {\bf e}_{v_0}
\right) ({\bf w} - {\bf e})^T \right] \Psi^{-1}.
\]
By~\eqref{Eidentity}, the first term of this product equals
\begin{align*}
&\ - a_f^{-1} {\bf e}_{v_0} {\bf e}^T X^{-1} +
\frac{\kappa(\dgbar)}{\det(\dgbar)} \left[ {\bf w}^T X^{-1} ({\bf u} -
{\bf e}) + \frac{\det(D_f)}{a_f \kappa(D_f)} \right] {\bf e}_{v_0}
(\tauin^2)^T.
\end{align*}

\noindent Moreover, a parallel computation to the previous step yields:
\[
a_f ({\bf w} - {\bf e})^T \Psi^{-1} = -
\frac{\kappa(\dgbar)}{\det(\dgbar)} \frac{\det(\dg)}{\kappa(\dg)}
(\tauin^2)^T.
\]

Hence the $(1,2)$-block $-\dg^{-1} H \Psi^{-1}$ on the left
of~\eqref{Einverse-block} equals:
\begin{align}\label{Etemp1}
= &\ - a_f^{-1} {\bf e}_{v_0} {\bf e}^T X^{-1} +
\frac{\kappa(\dgbar)}{\det(\dgbar)} \cdot {\bf w}^T X^{-1} ({\bf u} -
{\bf e}) \cdot {\bf e}_{v_0} (\tauin^2)^T +
\frac{\kappa(\dgbar)}{\det(\dgbar)} \cdot \frac{\det(D_f)}{a_f
\kappa(D_f)} \cdot {\bf e}_{v_0} (\tauin^2)^T \notag\\
&\ + \frac{\kappa(\dgbar)}{\det(\dgbar)} \tauout (\tauin^2)^T +
\frac{\kappa(\dgbar)}{\det(\dgbar)} \frac{\det(\dg)}{\kappa(\dg)}
C_\mathcal{G} {\bf e}_{v_0} (\tauin^2)^T.
\end{align}

\noindent The first and third terms on the right-hand side
of~\eqref{Etemp1} add up to give
\begin{equation}\label{Etemp2}
- a_f^{-1} \frac{\kappa(\dgbar)}{\det(\dgbar)}
\frac{\det(\dg)}{\kappa(\dg)} {\bf e}_{v_0} {\bf e}^T X^{-1}
- a_f^{-1} \frac{\kappa(\dgbar)}{\det(\dgbar)}
\frac{\det(D_f)}{\kappa(D_f)} {\bf e}_{v_0} {\bf w}^T X^{-1},
\end{equation}

\noindent while combining the second and fourth terms of~\eqref{Etemp1}
yields
$\displaystyle \frac{\kappa(\dgbar)}{\det(\dgbar)} \tauout^1
(\tauin^2)^T$
via~\eqref{Etauinout12}. Now split the final term in~\eqref{Etemp1} using
\[
(\tauin^2)^T = {\bf e}^T X^{-1} - {\bf w}^T X^{-1}
\]
and pair these two terms with the two terms in~\eqref{Etemp2} to obtain:
\begin{align*}
-\dg^{-1} H \Psi^{-1}
= &\ \frac{\kappa(\dgbar)}{\det(\dgbar)} \tauout^1 (\tauin^2)^T +
\frac{\kappa(\dgbar)}{\det(\dgbar)} \frac{\det(\dg)}{\kappa(\dg)} \left(
- a_f^{-1} {\bf e}_{v_0} + C_\mathcal{G} {\bf e}_{v_0} \right) {\bf e}^T
X^{-1}\\
&\ + \frac{\kappa(\dgbar)}{\det(\dgbar)} \left( a_f^{-1}
\frac{\det(D_f)}{\kappa(D_f)} {\bf e}_{v_0} +
\frac{\det(\dg)}{\kappa(\dg)} C_\mathcal{G} {\bf e}_{v_0} \right) (-{\bf
w}^T X^{-1}).
\end{align*}

This is the $(1,2)$-block on the left-hand side
of~\eqref{Einverse-block}. The first term exactly matches the first term
on the right-hand side of~\eqref{Einverse-block}. Moreover, a careful
computation reveals that
\begin{align}\label{Ecareful}
\begin{aligned}
\overline{C}_{11} {\bf e}_{v_0} = &\ \frac{\kappa(\dgbar)}{\det(\dgbar)}
\left( a_f^{-1} \frac{\det(D_f)}{\kappa(D_f)} {\bf e}_{v_0} +
\frac{\det(\dg)}{\kappa(\dg)} C_\mathcal{G} {\bf e}_{v_0} \right),\\
\overline{C}_{12} = &\ \frac{\kappa(\dgbar)}{\det(\dgbar)}
\frac{\det(\dg)}{\kappa(\dg)} \left( - a_f^{-1} {\bf e}_{v_0} +
C_\mathcal{G} {\bf e}_{v_0} \right) {\bf e}^T.
\end{aligned}
\end{align}
From this and the above computations, the $(1,2)$-blocks
in~\eqref{Einverse-block} agree.\medskip

\noindent \textbf{Step 6:}
Finally, we reconcile the $(1,1)$-blocks in~\eqref{Einverse-block}. On the
left-hand side is $\dg^{-1} + \dg^{-1} H \cdot \Psi^{-1} K \dg^{-1}$. By
the induction hypothesis, Step 4, and the first line in Step 5, we have
\begin{align*}
&\ \dg^{-1} + \dg^{-1} H \cdot \Psi^{-1} K \dg^{-1}\\
= &\ \frac{\kappa(\dg)}{\det(\dg)} \tauout \tauin^T + C_\mathcal{G}
(D^*_G)^{-1} + \left[ a_f \left( \frac{\kappa(\dg)}{\det(\dg)} \tauout +
C_{\mathcal{G}} {\bf e}_{v_0} \right) ({\bf w} - {\bf e})^T + {\bf
e}_{v_0} {\bf e}^T \right]
\times\\
&\ \qquad \qquad \times \left( \frac{\kappa(\dgbar)}{\det(\dgbar)} \left[
X^{-1} ({\bf u} - {\bf e}) \tauin^T + ({\bf w} - {\bf e})^T X^{-1} {\bf
u} \cdot X^{-1} ({\bf u} - {\bf e}) {\bf e}_{v_0}^T \right] + a_f^{-1}
X^{-1} {\bf u} {\bf e}_{v_0}^T \right)\\
= &\ \frac{\kappa(\dg)}{\det(\dg)} \tauout \tauin^T + C_\mathcal{G}
(D^*_G)^{-1} + S_1 + \cdots + S_7,
\end{align*}
where
\begin{align}\label{S1to7}
\begin{aligned}
S_1 := &\ - \frac{\kappa(\dg)}{\det(\dg)}
\frac{\kappa(\dgbar)}{\det(\dgbar)} \frac{\det(D_f)}{\kappa(D_f)} \tauout
\tauin^T,\\
S_2 := &\ \frac{\kappa(\dgbar)}{\det(\dgbar)} ({\bf w} - {\bf e})^T
X^{-1} {\bf u} \cdot \tauout {\bf e}_{v_0}^T,\\
S_3 := &\ - \frac{\kappa(\dgbar)}{\det(\dgbar)}
\frac{\det(D_f)}{\kappa(D_f)} C_\mathcal{G} {\bf e}_{v_0} \tauin^T,\\
S_4 := &\ \frac{\kappa(\dgbar)}{\det(\dgbar)}
\frac{\det(\dg)}{\kappa(\dg)} ({\bf w}^T X^{-1} {\bf u} - {\bf e}^T
X^{-1} {\bf u}) \cdot C_{\mathcal{G}} {\bf e}_{v_0} {\bf e}_{v_0}^T,\\
S_5 := &\ \frac{\kappa(\dgbar)}{\det(\dgbar)} {\bf e}^T
X^{-1} ({\bf u} - {\bf e}) \cdot {\bf e}_{v_0} \tauin^T,\\
S_6 := &\ \frac{\kappa(\dgbar)}{\det(\dgbar)} {\bf e}^T X^{-1} ({\bf u} -
{\bf e}) \cdot ({\bf w} - {\bf e})^T X^{-1} {\bf u} \cdot {\bf e}_{v_0}
{\bf e}_{v_0}^T,\\
S_7 := &\ a_f^{-1} {\bf e}^T X^{-1} {\bf u} \cdot {\bf e}_{v_0} {\bf
e}_{v_0}^T.
\end{aligned}
\end{align}

We now start to combine these nine terms. The first term added to
$S_1$ and $S_2$ yields via~\eqref{Etauinout12}:
\begin{align*}
S_8 := &\ \frac{\kappa(\dgbar)}{\det(\dgbar)} \tauout^1 (\tauin^1)^T -
\frac{\kappa(\dgbar)}{\det(\dgbar)} {\bf w}^T X^{-1} ({\bf u} - {\bf e})
\cdot {\bf e}_{v_0} \tauin^T\\
&\ - \frac{\kappa(\dgbar)}{\det(\dgbar)} {\bf w}^T X^{-1} ({\bf u} - {\bf
e}) \cdot ({\bf w} - {\bf e})^T X^{-1} {\bf u} \cdot {\bf e}_{v_0} {\bf
e}_{v_0}^T.
\end{align*}

The second term above, combined with $S_3, S_5$, and the second sub-term
in $S_8$, yields:
\begin{align*}
S_9 := &\ \left( C_\mathcal{G} - \frac{\kappa(\dgbar)}{\det(\dgbar)}
\frac{\det(D_f)}{\kappa(D_f)} C_\mathcal{G} {\bf e}_{v_0} {\bf e}^T +
a_f^{-1} \frac{\kappa(\dgbar)}{\det(\dgbar)}
\frac{\det(D_f)}{\kappa(D_f)} {\bf e}_{v_0} {\bf e}^T \right)
(D^*_G)^{-1}.
\end{align*}
But the factor on the right preceding $(D^*_G)^{-1}$ is precisely
$\overline{C}_{11}$ from~\eqref{Einverse-block}, as a careful
verification reveals. Finally, $S_6$ and the third sub-term of $S_8$ add
up to yield via~\eqref{Eidentity}:
\[
S_{10} := \frac{\kappa(\dgbar)}{\det(\dgbar)} \frac{\det(D_f)}{a_f
\kappa(D_f)} ({\bf w}^T X^{-1} {\bf u} - {\bf e}^T X^{-1} {\bf u}) {\bf
e}_{v_0} {\bf e}_{v_0}^T.
\]

Combining all of these, the $(1,1)$-block on the left-hand side
of~\eqref{Einverse-block} equals
\begin{align*}
&\ \dg^{-1} + \dg^{-1} H \cdot \Psi^{-1} K \dg^{-1}\\
= &\ \frac{\kappa(\dgbar)}{\det(\dgbar)} \tauout^1 (\tauin^1)^T +
\overline{C}_{11} (D^*_G)^{-1} + S_4 + S_7 + S_{10}.
\end{align*}

Examining the $(1,1)$-blocks of~\eqref{Einverse-block}, it remains to
show that
\[
S_4 + S_7 + S_{10} = \overline{C}_{11} \cdot ({\bf w}^T X^{-1} {\bf u})
{\bf e}_{v_0} {\bf e}_{v_0}^T + \overline{C}_{12} (-X^{-1} {\bf u}) {\bf
e}_{v_0}^T.
\]
Notice from above that $S_4, S_7, S_{10}$ are all combinations of
\[
{\bf w}^T X^{-1} {\bf u} \cdot {\bf e}_{v_0} {\bf e}_{v_0}^T, \qquad
{\bf e}^T X^{-1} {\bf u} \cdot {\bf e}_{v_0} {\bf e}_{v_0}^T.
\]
Regroup $S_4 + S_7 + S_{10}$ in terms of these, and remove the ${\bf
e}_{v_0}^T$ from both sides of the preceding equation. Thus, it suffices
to show:
\begin{align}\label{Etoshow}
\begin{aligned}
&\ {\bf w}^T X^{-1} {\bf u} \cdot \frac{\kappa(\dgbar)}{\det(\dgbar)}
\left( a_f^{-1} \frac{\det(D_f)}{\kappa(D_f)} {\bf e}_{v_0} +
\frac{\det(\dg)}{\kappa(\dg)} C_\mathcal{G} {\bf e}_{v_0} \right)\\
+ &\ {\bf e}^T X^{-1} {\bf u} \cdot \frac{\kappa(\dgbar)}{\det(\dgbar)}
\left( a_f^{-1} \frac{\det(\dgbar)}{\kappa(\dgbar)} {\bf e}_{v_0} -
a_f^{-1} \frac{\det(D_f)}{\kappa(D_f)} {\bf e}_{v_0} -
\frac{\det(\dg)}{\kappa(\dg)} C_\mathcal{G} {\bf e}_{v_0} \right)\\
= &\ {\bf w}^T X^{-1} {\bf u} \cdot \overline{C}_{11} {\bf e}_{v_0} +
\overline{C}_{12} (-X^{-1} {\bf u}).
\end{aligned}
\end{align}
But this follows by applying~\eqref{Emghh3} and~\eqref{Ecareful}.
\end{proof}

\subsection{Special case 1: additive-multiplicative Laplacian, and
additive-multiplicative (hyper)trees}\label{Shypertrees}

We begin this final subsection by recalling the formula for $\dt^{-1}$
for trees equipped with a general additive-multiplicative datum $\{ (a_e,
m_e, m'_e) : e \in E \}$. Specifically, in~\cite{CK-tree} we defined
certain vectors $\tauin, \tauout$, a scalar $\at$, the graph Laplacian
matrix $L_\mathcal{T}$, and a matrix $C_\mathcal{T}$, such that
\begin{equation}\label{Einverse-tree}
\dt^{-1} = \frac{1}{\at} \tauout \tauin^T - L_\mathcal{T} + C_\mathcal{T}
\diag(\tauin).
\end{equation}

We now show that Theorem~\ref{Tinverse-general} can be recast into a
similar formula for $\dg^{-1}$, for any graph $G$. We begin by defining
$\alpha_\mathcal{G}$ and $L_\mathcal{G}$ in general -- the symbols
$\tauin, \tauout, C_\mathcal{G}$ were already defined above.

\begin{definition}
Given the block-datum $\mathcal{G} = \{ \mathcal{G}_e = ( a_e, D^*_{G_e})
: e \in E \}$ for a graph $G$, with strong blocks $G_e$ and invertible
$a_e, D^*_{G_e}$, define
\[
\alpha_{\mathcal{G}} := \frac{\det(\dg)}{\kappa(\dg)},
\]
and define the \textit{additive-multiplicative Laplacian} to be the
$|V(G)| \times |V(G)|$ matrix $L_{\mathcal{G}} = (l_{ij})$, with:
\[
l_{ij} := \begin{cases}
\frac{-1}{a_e}((D^*_G)^{-1})_{ij}, & \text{if } i \neq j \text{ in a
block } e \in E,\\
0 & \text{if } i,j \text{ lie in different blocks},\\
-\sum_{k \neq j} l_{kj}, \qquad & \text{if } i=j.
\end{cases}
\]
\end{definition}

Note that $L_\mathcal{G}$ has column sums zero; in the
special case of trees, for $q$- and classical distance matrices which
are symmetric, $L_\mathcal{G}$ was symmetric and had zero row sums,
e.g.~in~\cite{GL}. Moreover, it follows by~\eqref{Eghh-inverse} that
$D^*_G$ is a sum of block diagonal matrices (overlapping at the
cut-vertex diagonal entries), hence so is the additive-multiplicative
Laplacian matrix $L_{\mathcal{G}}$.

\begin{prop}\label{Ptreelike}
Notation as above. Then:
\begin{equation}\label{Eprojesh24}
C_\mathcal{G} = (-L_\mathcal{G} + C_\mathcal{G} \diag(\tauin)) D^*_G.
\end{equation}
In particular, and parallel to~\eqref{Einverse-tree}, one has:
\[
\dg^{-1} = \frac{1}{\alpha_{\mathcal{G} }} \tauout \tauin^T -
L_\mathcal{G} + C_\mathcal{G} \diag(\tauin).
\]
\end{prop}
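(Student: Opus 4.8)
The plan is to deduce Proposition~\ref{Ptreelike} from Theorem~\ref{Tinverse-general}, the block-decomposition~\eqref{Eghh-inverse} of $(D^*_G)^{-1}$, and the partition of $V(G)\setminus\{i\}$ by the branches $G_{i\to e}$ recorded in Remark~\ref{RCmatrix}. First I would observe that the ``in particular'' assertion is immediate once~\eqref{Eprojesh24} is established: right-multiplying~\eqref{Eprojesh24} by $(D^*_G)^{-1}$ gives $C_\mathcal{G}(D^*_G)^{-1} = -L_\mathcal{G} + C_\mathcal{G}\diag(\tauin)$, and substituting this together with $\frac{\kappa(\dg)}{\det(\dg)} = \frac{1}{\alpha_{\mathcal{G}}}$ into the formula~\eqref{Einverse} of Theorem~\ref{Tinverse-general} yields exactly the claimed expression for $\dg^{-1}$. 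So the whole content of the proposition is the matrix identity~\eqref{Eprojesh24}; and since $D^*_G$ is invertible, this is equivalent to $C_\mathcal{G}(D^*_G)^{-1} + L_\mathcal{G} = C_\mathcal{G}\diag(\tauin)$.

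I would verify this last identity entrywise. Write $p_{kj} := \big((D^*_G)^{-1}\big)_{kj}$ and $c_{ik} := (C_\mathcal{G})_{ik}$; since $\tauin^T = {\bf e}^T (D^*_G)^{-1}$ gives $(\tauin)_j = \sum_k p_{kj}$, the $(i,j)$-entry of the identity amounts to
\begin{equation*}
\sum_{k \in V(G)} (c_{ik} - c_{ij})\, p_{kj} = -\, l_{ij}, \qquad \text{for all } i, j \in V(G).
\end{equation*}
Two structural inputs make this tractable. First, by~\eqref{Eghh-inverse}, $p_{kj} = 0$ unless $k = j$ or $k$ and $j$ lie in a common block $e$ (which is then unique, as two blocks meet in at most one vertex), in which case $p_{kj} = \big((D^*_{G_e})^{-1}\big)_{kj}$. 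Second, from~\eqref{ECmatrix} and Remark~\ref{RCmatrix} one has $c_{ik} - c_{ij} = \frac{1 - \delta_{ij}}{a_{e_{i\to j}}} - \frac{1 - \delta_{ik}}{a_{e_{i\to k}}}$, where (as in the proof of Proposition~\ref{Pprelim}) $e_{i\to m}$ denotes the unique block $e \ni i$ with $m \in G_{i\to e}$; in particular the $k = j$ summand contributes zero.

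The crux is then a combinatorial claim: if $k \notin \{i, j\}$ and $p_{kj} \neq 0$ -- so $k$ and $j$ share a block $e'$ -- then $e_{i\to k} = e_{i\to j}$. Indeed, if $i \in e'$ then $e'$ is the (unique) common block of $i$ and $j$ and $k$ lies in it, so $k \in G_{i\to e'}$ and $e_{i\to j}=e'=e_{i\to k}$; while if $i \notin e'$ then $k$ is joined to $j$ inside $e'$ without meeting $i$, so $k$ and $j$ lie in the same component of $G - i$, hence in the same branch $G_{i\to e}$ by Remark~\ref{RCmatrix}, again forcing $e_{i\to k}=e_{i\to j}$. Thus all such summands vanish, and the sum collapses to the $k = i$ term: if $i$ and $j$ share a block $e_0 = e_{i\to j}$ this contributes $(c_{ii} - c_{ij})\, p_{ij} = \frac{1}{a_{e_0}} p_{ij} = -\, l_{ij}$, while if $i$ and $j$ lie in different blocks there is no $k=i$ term ($p_{ij}=0$) and also $l_{ij}=0$. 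The diagonal case $i=j$ is identical in spirit and reduces to $\sum_{k \neq i} \big(-\tfrac{1}{a_{e_{i\to k}}}\big) p_{ki} = -\, l_{ii}$, using that $p_{ki}\neq 0$ forces $k$ to share the block $e_{i\to k}$ with $i$ and that $L_\mathcal{G}$ has zero column sums.

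I expect the one genuinely delicate point to be the cut-vertex bookkeeping in the combinatorial claim: carefully justifying that a vertex $k$ sharing a block with $j$ sits in the same branch $G_{i\to e}$ as $j$, using that the branches partition $V(G)\setminus\{i\}$ and that a walk confined to a single block cannot cross the cut-vertex $i$. Everything else -- the reduction to~\eqref{Eprojesh24}, the entrywise rewriting, and the two structural inputs -- is a routine unwinding of Theorem~\ref{Tinverse-general} and~\eqref{Eghh-inverse}.
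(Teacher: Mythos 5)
Your proposal is correct, and it reaches the key identity $C_\mathcal{G}\bigl((D^*_G)^{-1}-\diag(\tauin)\bigr)+L_\mathcal{G}=0$ by a genuinely different route from the paper. The paper works globally: it decomposes $C_\mathcal{G}$, $(D^*_G)^{-1}$, $-L_\mathcal{G}$ and $\diag(\tauin)$ into sums of block-supported matrices (its display~\eqref{Eprelim}), multiplies these out, and reindexes the surviving terms over the set $E_\cap$ of ordered pairs of distinct intersecting blocks, showing that the three resulting sums $S_1+S_2+S_3$ cancel -- the key inputs being that the row sums of $M_f=\bigl[(D^*_{G_f})^{-1}-\diag(\tauin^f)\bigr]_{V_f}$ vanish and that a nonzero product forces the two blocks to meet at a cut-vertex $v_{ef}$. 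You instead verify the identity entrywise, reducing it to $\sum_k(c_{ik}-c_{ij})p_{kj}=-l_{ij}$ and observing that the branch structure of $C_\mathcal{G}$ forces $c_{ik}=c_{ij}$ whenever $k\notin\{i,j\}$ shares a block with $j$, so the sum collapses to the single term $k=i$, which is exactly $-l_{ij}$ (and to the zero-column-sum identity when $i=j$). Both arguments exploit the same two structural facts -- the block support of $(D^*_G)^{-1}$ from~\eqref{Eghh-inverse} and the constancy of the rows of $C_\mathcal{G}$ on branches -- but your entrywise telescoping is more elementary and arguably more transparent, while the paper's version keeps the cancellation in matrix form at the cost of heavier $[\cdot]_S$ bookkeeping. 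The one point you rightly flag as delicate -- that $k,j$ in a common block $e'\not\ni i$ forces $e_{i\to k}=e_{i\to j}$ -- needs slightly more than the partition statement of Remark~\ref{RCmatrix}: you must know that distinct branches $G_{i\to e}\setminus\{i\}$ lie in distinct connected components of $G-i$, which follows from the maximality in the definition of $G_{i\to e}$ (otherwise two branches joined by a path avoiding $i$ would assemble into a strictly larger induced subgraph in which $i$ is not a cut-vertex). With that standard block-decomposition fact supplied, your argument is complete, and your reduction of the final displayed formula to~\eqref{Eprojesh24} via Theorem~\ref{Tinverse-general} matches the paper's.
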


In particular, this implies~\eqref{Einverse-tree}.
A related observation is that in~\cite{CK-tree}, we showed for trees via
explicit computations:
\[
\tauin^T {\bf m}_{\bullet \to l} = 1, \qquad \forall l \in V(G),
\]
where ${\bf m}_{\bullet \to l} := D^*_G {\bf e}_l$ denotes the vector
$(m_{vl})_{v \in V(G)}$. With our newfound understanding of $\tauin$,
this is now obvious, and for any graph $G$.

\begin{proof}
Define the ``block-degree'' of a vertex $v \in V(G)$ as follows:
\begin{equation}
d_E(v) := \# \{ e \in E : v \in e \}, \qquad V^{cut} :=
d_E^{-1}([2,\infty)).
\end{equation}
Notice that $V^{cut}$ is precisely the set of cut-vertices; in the case
of a tree, these are precisely the non-pendant nodes and $d_E(v)$ is
the degree of $v$.\footnote{As an aside, this suggests a different
definition/interpretation of pendant nodes in a graph: those which lie in
a unique strong block. This differs from the usual notion of a pendant
node -- i.e.~a node with ``usual'' degree one -- and we do not use this
alternate interpretation further.}

Once again assume (by Zariski density) that $\kappa(\dg)$ is invertible,
and hence so are all $a_e$ and $D^*_{G_e}$. Also define for convenience
$V_e := V(G_e)$ for each block $e \in E$. Then it follows
from~\eqref{Eghh-inverse},
the lines preceding Remark~\ref{Rae-constant}, and the definitions that
\begin{align}\label{Eprelim}
\begin{aligned}
C_\mathcal{G} = &\ \sum_{e \in E} \left[ \frac{1}{a_e} \Id_{V_e}
\right]_{V_e} +
\sum_{e \in E} \sum_{v \in e \cap V^{cut}} \left[ \left( \beta_v -
\frac{1}{a_e} \right) {\bf e}_v {\bf e}(V(G_{v \to e}))^T \right]_{V(G_{v
\to e})}\\
&\ + \sum_{v \in V^{cut}} (1 - d_E(v)) \beta_v {\bf e}_v {\bf e}_v^T,\\
(D^*_G)^{-1} = &\ \sum_{e \in E} [ (D^*_{G_e})^{-1} ]_{V_e} + \sum_{v \in
V^{cut}} (1 - d_E(v)) {\bf e}_v {\bf e}_v^T\\
-L_\mathcal{G} = &\ \sum_{e \in E} \frac{1}{a_e} \left[ (D^*_{G_e})^{-1}
- \diag(\tauin^e) \right]_{V_e},\\
\diag(\tauin) = &\ \diag( {\bf e}^T (D^*_G)^{-1}) = \sum_{e \in E} \left[
\diag(\tauin^e) \right]_{V_e} + \sum_{v \in V^{cut}} (1 - d_E(v)) {\bf
e}_v {\bf e}_v^T,
\end{aligned}
\end{align}
where $(\tauin^e)^T := {\bf e}(V_e)^T (D^*_{G_e})^{-1}$. (The fourth of
these formulas follows from the second.)

Notice that the final assertion in the proposition follows
from~\eqref{Eprojesh24} via Theorem~\ref{Tinverse-general}. Now,
showing~\eqref{Eprojesh24} is equivalent -- by Zariski density -- to
showing:
\[
C_\mathcal{G} ((D^*_G)^{-1} - \diag(\tauin)) + L_\mathcal{G} = 0.
\]
Using the formulas~\eqref{Eprelim}, we see that the terms in
$L_\mathcal{G}$ cancel out some of the terms in the first summand of
$C_\mathcal{G}$ times $((D^*_G)^{-1} - \diag(\tauin))$. Now writing
\[
M_f := \left[ (D^*_{G_f})^{-1} - \diag(\tauin^f) \right]_{V_f}, \qquad f
\in E
\]

\noindent it follows that
\begin{align}
\begin{aligned}\label{Erightside}
C_\mathcal{G} ((D^*_G)^{-1} - \diag(\tauin)) + L_\mathcal{G}
= &\ \sum_{e \in E} \sum_{f \in E, f \neq e} \frac{1}{a_e}
[\Id_{V_e}]_{V_e} M_f + \sum_{v \in V^{cut}, f \in E} (1 - d_E(v))
\beta_v {\bf e}_v {\bf e}_v^T M_f\\
&\ \ + \sum_{e,f \in E} \sum_{v \in e \cap V^{cut}} \left[ \left( \beta_v -
\frac{1}{a_e} \right) {\bf e}_v {\bf e}(V(G_{v \to e}))^T \right]_{V(G_{v
\to e})} M_f.
\end{aligned}
\end{align}

Denote the three sums on the right-side of~\eqref{Erightside} by $S_1,
S_2, S_3$ respectively. To show $S_1 + S_2 + S_3$ vanishes, we first
\textbf{claim} that each $S_j$ can be combinatorially reindexed to a
sum over the set
\begin{equation}\label{Ecap}
E_\cap := \{ (e,f) \in E^2 : e \neq f, \ V_e \cap V_f \text{ is nonempty}
\}.
\end{equation}
Notice that $V_e \cap V_f$ is a unique cut-vertex if and only if $(e,f)
\in E_\cap$; denote this vertex by $v_{ef}$.

We now show the claim. The first sum $S_1$ clearly vanishes if $V_e \cap
V_f$ is empty; otherwise since $e \neq f$, it follows that $(e,f) \in
E_\cap$. Moreover, $\Id_{V_e} = \sum_{v \in V_e} {\bf e}_v {\bf e}_v^T$,
so
\[
S_1 = \sum_{(e,f) \in E_\cap} \frac{1}{a_e} {\bf e}_{v_{ef}} {\bf
e}_{v_{ef}}^T M_f.
\]

Similarly, the second sum $S_2$ vanishes unless $v \in f$. Changing the
multiplicative factor $(1 - d_E(v))$ to summing over $\{ e \in E : e \neq
f,\ e \ni v \}$, we have:
\[
S_2 = - \sum_{(e,f) \in E_\cap} \beta_{v_{ef}} {\bf e}_{v_{ef}} {\bf
e}_{v_{ef}}^T M_f.
\]

Finally, the third sum $S_3$ in the right-hand side of~\eqref{Erightside}
involves row vectors of the form ${\bf e}(V(G_{v \to e}))^T M_f$. Notice
that if $V_f \cap V(G_{v \to e})$ is empty then this product row vector
is trivially zero; the same holds if $f \subset G_{v \to e}$, since in
that case
${\bf e}(V(G_{v \to e}))^T M_f = {\bf e}(V_f)^T M_f$,
which vanishes by definition. By the geometry of the graph (i.e., the
definition of $G_{v \to e}$), it follows that if ${\bf e}(V(G_{v \to
e}))^T M_f$ is nonzero, then $(e,f) \in E_\cap$ and $v = v_{ef}$. But
then,
\[
[{\bf e}_v {\bf e}(V(G_{v \to e}))^T]_{V(G_{v \to e})} M_f = {\bf e}_v
{\bf e}_v^T M_f = {\bf e}_{v_{ef}} {\bf e}_{v_{ef}}^T M_f,
\]
from which it follows that
\[
S_3 = \sum_{(e,f) \in E_\cap} \left( \beta_{v_{ef}} - \frac{1}{a_e}
\right) {\bf e}_{v_{ef}} {\bf e}_{v_{ef}}^T M_f.
\]

This shows the above claim. Finally, adding up the previous computations,
\[
C_\mathcal{G} ((D^*_G)^{-1} - \diag(\tauin)) + L_\mathcal{G} = S_1 + S_2
+ S_3 = 0.
\]
This shows~\eqref{Eprojesh24} by Zariski density (see the remarks
after~\eqref{Eprelim}), and concludes the proof.
\end{proof}


To conclude, we remark that the explicit formula for $\dt^{-1}$ for
trees, provided in~\cite[Theorem B]{CK-tree}, is indeed a special case of
Theorem~\ref{Tinverse-general}, by explicitly writing down $(D^*_e)^{-1}$
for every edge (i.e.~strong block) $e$ of the tree.
More generally, one can do the same for hypertrees:

\begin{prop}[Inverse formula for additive-multiplicative
hypertrees]\label{Phypertree-inverse}
Let $G$ be a hypertree, with block-datum $\mathcal{G}$ as in
Proposition~\ref{Phypertree}. Then $\dg^{-1}$ is as in
Theorem~\ref{Tinverse-general} or Proposition~\ref{Ptreelike}, with
\begin{align*}
\alpha_\mathcal{G} = &\ \sum_{e \in E} \frac{-a_e}{1 + {\bf e}(p_e)^T
{\bf d}_e} \left( p_e - 1 + \sum_{v<w \in [p_e]} \frac{(m_{e,v} -
m_{e,w}) (m'_{e,v} - m'_{e,w})}{(1 - m_{e,v} m'_{e,v}) (1 - m_{e,w}
m'_{e,w})} \right),\\
(\tauin)_v = &\ 1 - \sum_{e : v \in e} \frac{1}{1 + {\bf e}(p_e)^T {\bf
d}_e} \sum_{w \in e, w \neq v} \frac{m_{e,w} m'_{e,v} (1 - m_{e,v}
m'_{e,w})}{(1 - m_{e,v} m'_{e,v}) (1 - m_{e,w} m'_{e,w})},\\
(\tauout)_v = &\ 1 - \sum_{e : v \in e} \frac{1}{1 + {\bf e}(p_e)^T {\bf
d}_e} \sum_{w \in e, w \neq v} \frac{m'_{e,w} m_{e,v} (1 - m'_{e,v}
m_{e,w})}{(1 - m_{e,v} m'_{e,v}) (1 - m_{e,w} m'_{e,w})}.
\end{align*}
Moreover, the additive-multiplicative Laplacian matrix is given by
\[
(L_\mathcal{G})_{v,w} = \begin{cases}
0 & \text{if } v \neq w, v \not\sim w,\\
\displaystyle \frac{m_{e,v} m'_{e,w}}{a_e (1 + {\bf e}(p_e)^T {\bf d}_e)
(1 - m_{e,v} m'_{e,v}) (1 - m_{e,w} m'_{e,w})} \qquad & \text{if } v \sim
w \in e,\\
\displaystyle \sum_{e \in E : w \in e} \frac{-1}{a_e (1 + {\bf e}(p_e)^T
{\bf d}_e)} \sum_{u \in e, u \neq w} \frac{m_{e,u} m'_{e,w}}{(1 - m_{e,u}
m'_{e,u}) (1 - m_{e,w} m'_{e,w})} & \text{if } v = w,
\end{cases}
\]
and we also have $C_\mathcal{G}$ given by~\eqref{ECmatrix}, with
\[
\beta_i = \frac{1}{\alpha_\mathcal{G}} \sum_{e \in E : i \in e}
\frac{1}{a_e} \sum_{f \in E : f \subset G_{i \to e}}
\frac{-a_f}{1 + {\bf e}(p_f)^T {\bf d}_f} \left( p_f - 1 + \sum_{v<w \in
[p_f]} \frac{(m_{f,v} - m_{f,w}) (m'_{f,v} - m'_{f,w})}{(1 - m_{f,v}
m'_{f,v}) (1 - m_{f,w} m'_{f,w})} \right).
\]
\end{prop}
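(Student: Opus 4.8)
The plan is to obtain all five ingredients of the inverse formula in Theorem~\ref{Tinverse-general} and Proposition~\ref{Ptreelike} --- the scalar $\alpha_{\mathcal{G}}$, the vectors $\tauin,\tauout$, the matrix $L_{\mathcal{G}}$, and the matrix $C_{\mathcal{G}}$ (through the scalars $\beta_i$) --- by specializing the closed forms already proved in Lemmas~\ref{Ldetcof} and~\ref{Lhypertree-mult} and in Proposition~\ref{Phypertree} to the clique blocks $G_e=K_{p_e}$. Throughout one assumes, by Zariski density (Remark~\ref{Rzariski-short}), that $\kappa(\dg)$ is invertible; then so are all $a_e$ and all $D^*_{G_e}=D^*_{K_{p_e}}$, hence $\dg$ and $D^*_G$, and the final descent from $\mathbb{Q}(\{a_e,m_{e,v},m'_{e,v}\})$ to an arbitrary commutative ring $R$ is routine. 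With $\dg,D^*_G$ invertible, Theorem~\ref{Tinverse-general} and Proposition~\ref{Ptreelike} apply verbatim, so only the five ingredients remain to be identified.

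Two of them are immediate. The scalar $\alpha_{\mathcal{G}}=\det(\dg)/\kappa(\dg)$ is exactly the bracketed factor in the formula for $\det(\dg)$ in Proposition~\ref{Phypertree}. For $\beta_i$, recall from~\eqref{ECmatrix} that $\beta_i=\frac{\kappa(\dg)}{\det(\dg)}\sum_{e\ni i}\frac1{a_e}\sum_{f\subset G_{i\to e}}\frac{\det(\dgf)}{\kappa(\dgf)}$; setting $x=0$ in the displayed expression for $\det(\dgf+xJ)$ in the proof of Proposition~\ref{Phypertree} (equivalently, combining Lemmas~\ref{Ldetcof} and~\ref{Lhypertree-mult}) gives
\[
\frac{\det(\dgf)}{\kappa(\dgf)}=\frac{-a_f}{1+{\bf e}(p_f)^T{\bf d}_f}\left(p_f-1+\sum_{v<w\in[p_f]}\frac{(m_{f,v}-m_{f,w})(m'_{f,v}-m'_{f,w})}{(1-m_{f,v}m'_{f,v})(1-m_{f,w}m'_{f,w})}\right),
\]
and substituting this together with $\kappa(\dg)/\det(\dg)=1/\alpha_{\mathcal{G}}$ yields the stated $\beta_i$, hence $C_{\mathcal{G}}$ via~\eqref{ECmatrix}.

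The remaining three ingredients reduce to the explicit inverse of a single clique block, which is exactly the content of the proof of Lemma~\ref{Lhypertree-mult}: from $D^*_{K_p}=\diag(1-m_vm'_v)_{v}+{\bf m}\,({\bf m'})^T$ and Sherman--Morrison one has $(D^*_{K_p})^{-1}=\diag({\bf m'})^{-1}C\diag({\bf m})^{-1}$ with $C=\diag({\bf d})-\frac1{1+{\bf e}(p)^T{\bf d}}{\bf d}{\bf d}^T$, and reading off entries gives $((D^*_{K_p})^{-1})_{vw}=-\frac{m_vm'_w}{(1-m_vm'_v)(1-m_wm'_w)(1+{\bf e}(p)^T{\bf d})}$ for $v\neq w$, together with closed forms for ${\bf e}(p)^T(D^*_{K_p})^{-1}$ and $(D^*_{K_p})^{-1}{\bf e}(p)$. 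I would then substitute these into the block decomposition~\eqref{Eghh-inverse} of $(D^*_G)^{-1}$ --- equivalently, into the assembled expressions~\eqref{Eprelim}. Taking ${\bf e}^T(\cdot)$ of~\eqref{Eghh-inverse}, the $(1-d_E(v))$ corrections at each cut-vertex $v$ cancel the overcount from summing $({\bf e}(p_e)^T(D^*_{G_e})^{-1})_v$ over the $d_E(v)$ blocks through $v$, leaving $(\tauin)_v=1+\sum_{e\ni v}\big(({\bf e}(p_e)^T(D^*_{G_e})^{-1})_v-1\big)$; a short simplification of the clique term --- splitting off the diagonal index and using $(1-m_vm'_v)(1+{\bf e}(p)^T{\bf d})-1=(1-m_vm'_v)\sum_{u\neq v}{\bf d}_u$ --- collapses each summand to $-\frac1{1+{\bf e}(p_e)^T{\bf d}_e}\sum_{w\in e,\,w\neq v}\frac{m_{e,w}m'_{e,v}(1-m_{e,v}m'_{e,w})}{(1-m_{e,v}m'_{e,v})(1-m_{e,w}m'_{e,w})}$, which is the claimed $\tauin$; the formula for $\tauout$ is the same computation with ${\bf m}$ and ${\bf m'}$ interchanged. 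For $L_{\mathcal{G}}$, its definition gives $l_{vw}=-a_e^{-1}((D^*_{G_e})^{-1})_{vw}$ for adjacent $v\neq w\in e$, which is precisely the stated off-diagonal entry, and the diagonal entry then follows from the column-sum-zero rule $l_{vv}=-\sum_{u\neq v}l_{uv}$.

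The one step that requires genuine care is the algebraic collapse of $({\bf e}(p_e)^T(D^*_{G_e})^{-1})_v-1$ (and its transpose) to the advertised single sum, together with the bookkeeping of the cut-vertex overlaps built into~\eqref{Eghh-inverse} and~\eqref{Eprelim}; everything else is direct substitution into Theorem~\ref{Tinverse-general} / Proposition~\ref{Ptreelike}, followed by the Zariski-density specialization to an arbitrary ring.
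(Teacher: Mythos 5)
Your proposal is correct and follows essentially the same route as the paper, which derives Proposition~\ref{Phypertree-inverse} from Lemma~\ref{Lhypertree-mult} and Proposition~\ref{Phypertree} by explicit computation (specializing Theorem~\ref{Tinverse-general}/Proposition~\ref{Ptreelike} and~\eqref{Eghh-inverse} to clique blocks via the Sherman--Morrison inverse of $D^*_{K_p}$); your identification of $\alpha_{\mathcal G}$ and $\det(\dgf)/\kappa(\dgf)$ and the collapse of $({\bf e}(p_e)^T(D^*_{G_e})^{-1})_v-1$ to the advertised single sum all check out. You actually supply more of the algebra than the paper does, which leaves these computations to the reader.
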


For instance, for the weighted $q$-distance matrix whose determinant was
computed in~\cite{S-hypertrees}, one sets $m_{e,v} = m'_{e,v} = \sqrt{q}$
and $a_e = w_e / (q-1)$ for all $e \in E$ and $v \in e$ in the
closed-form expressions above, to obtain $D_q(G)^{-1}$.

Proposition~\ref{Phypertree-inverse} follows from
Lemma~\ref{Lhypertree-mult} and Proposition~\ref{Phypertree} via explicit
computations. In particular, it is possible to suitably modify the
arguments used in proving \cite[Theorem~B]{CK-tree} for trees, and obtain
$\dg^{-1}$ for hypertrees via a different proof. In this case we do not
use the Master GHH identities (i.e.~Theorem~\ref{Tmasterghh}) since all
formulas in the preceding proposition are explicit. A consequence of this
explicit proof is that as for trees~\cite{CK-tree}, one obtains an
alternate, ``computational'' derivation of the closed-form expressions
for $\det(\dg)$ and $\cof(\dg)$ given in Proposition~\ref{Phypertree}.

\subsection{Special case 2: $q$- and additive distance matrices}

Another special case involves the $q$-distance matrix of an arbitrary
graph, in which case $a_e = 1/(q-1)$ for all $e$. In particular,
\[
\dg = D_q(G) = \frac{1}{q-1} (D^*_G - J),
\qquad C_\mathcal{G} = (q-1) \Id
\]
from the definitions and Theorem~\ref{Tmasterghh} (see
Remark~\ref{Rae-constant}). Hence by the Sherman--Morrison formula or via
Theorem~\ref{Tinverse-general},
\[
D_q(G)^{-1} = (q-1) \left[ (D^*_G)^{-1} + \frac{(D^*_G)^{-1} {\bf e}
\cdot {\bf e}^T (D^*_G)^{-1}}{1 - {\bf e}^T (D^*_G)^{-1} {\bf e}}
\right],
\]
and $(D^*_G)^{-1}$ is obtainable from the block submatrices $D^*_{G_e}$
via~\eqref{Eghh-inverse}. In turn, another use of the Sherman--Morrison
formula shows how to convert all occurrences of $(D^*_{G_e})^{-1}$ into
$D_q(G_e)^{-1}, e \in E$. This shows how to obtain a formula for
$\dg^{-1} = D_q(G)^{-1}$ in terms of the $D_q(G_e)^{-1}$; specializing to
$q \to 1$, we also recover an alternate formulation (and proof) of a
recent result of Zhou--Ding--Jia~\cite{ZDJ} -- namely, how to obtain
$D_G^{-1}$ in terms of the $D_{G_e}^{-1}$ for \textit{additive} matrices.
 Note that our method has the added advantage of also obtaining the
corresponding result for the $q$-matrices (for general $q$).
We leave further details to the interested reader.

\subsection*{Acknowledgments}

We thank the American Institute of Mathematics (in their new campus at
CalTech) and
A.K.\ thanks the Institute for Advanced Study, Princeton -- where parts
of this work were carried out -- for their generous hospitality and
excellent working conditions.
P.N.C.\ was partially supported by
INSPIRE Faculty Fellowship research grant DST/INSPIRE/04/2021/002620
(DST, Govt.~of India),
and IIT Gandhinagar Internal Project grant IP/IITGN/MATH/PNC/2223/25.
A.K.\ acknowledges support from
Ramanujan Fellowship SB/S2/RJN-121/2017 and
SwarnaJayanti Fellowship grants SB/SJF/2019-20/14 and DST/SJF/MS/2019/3
from SERB and DST (Govt.~of India),
by a Shanti Swarup Bhatnagar Fellowship from CSIR (Govt.\ of India), and
by the DST FIST program 2021 [TPN--700661].




\end{document}